\DeclareMathAlphabet{\mathpzc}{OT1}{pzc}{m}{it}
\def\blfootnote{\xdef\@thefnmark{}\@footnotetext}
\address{{\bf Dori Bejleri}\newline Mathematics Department, Brown University}
\email{dbejleri@math.brown.edu} 
\address{{\bf Gjergji Zaimi}}
\email{gjergjiz@gmail.com}
\title{The Topology of Equivariant Hilbert Schemes}
\author{{\larger D}{\smaller ori} {\larger B}{\smaller ejleri}\ \& \ {\larger G}{\smaller jergji} {\larger Z}{\smaller aimi}}
\date{\today}
\newtheorem*{mainthma}{Main Theorem A}
\newtheorem*{mainthmb}{Main Theorem B}
\newtheorem*{prop1.1}{Proposition 1.1}
\newtheorem*{thm1.2}{Theorem 1.2}
\newtheorem*{thm1.3}{Theorem 1.3}
\newtheorem{thm}{Theorem}[section]
\newtheorem{prop}{Proposition}[section]
\newtheorem{cor}{Corollary}[section]
\newtheorem{lemma}{Lemma}[section]
\newtheorem{defn}{Definition}[section]
\newtheorem{rem}{Remark}[section]
\newtheorem{ques}{Question}[section]
\newcommand{\mb}[1]{\mathbb{#1}}
\newcommand{\mf}[1]{\mathfrak{#1}}
\newcommand{\mc}[1]{\mathcal{#1}}
\newcommand{\Hom}{\operatorname{Hom}}
\newcommand{\Hilb}{\operatorname{Hilb}}
\newcommand{\Sym}{\operatorname{Sym}}
\def\ZZ{{\mathbb Z}}
\def\QQ{{\mathbb Q}}
\def\CC{{\mathbb C}}
\def\OO{{\mathcal O}}
\begin{document}
\begin{abstract} For $G$ a finite group acting linearly on $\mb{A}^2$, the equivariant Hilbert scheme $\Hilb^r[\mb{A}^2/G]$ is a natural resolution of singularities of $\Sym^r(\mb{A}^2/G)$. In this paper we study the topology of $\Hilb^r[\mb{A}^2/G]$ for abelian $G$ and how it depends on the group $G$. We prove that the topological invariants of $\Hilb^r[\mb{A}^2/G]$ are periodic or quasipolynomial in the order of the group $G$ as $G$ varies over certain families of abelian subgroups of $GL_2$. This is done by using the Bialynicki-Birula decomposition to compute topological invariants in terms of the combinatorics of a certain set of partitions.  
\end{abstract}
\maketitle
\tableofcontents
\section{Introduction}

Let $X$ be a smooth algebraic surface carrying the action of a finite group $G$. The \textit{equivariant Hilbert scheme} $\Hilb^r[X/G]$ (Section \ref{sec:intro}) is a generalization of the Hilbert scheme of points on $X$ that parametrizes certain $G$-equivariant subschemes. It is a natural resolution of singularities for the symmetric product $\Sym^r(X/G)$ of the quotient space. In this paper we study how the topology of these Hilbert schemes change as the group $G$ varies. 

When $G$ is an abelian group acting linearly on $X = \mb{A}^2$, we exhibit (Main Theorems \hyperref[mainthma]{A} and \hyperref[mainthmb]{B}) periodicity and quasipolynomiality for the Betti numbers and Euler characteristics of $\Hilb^r[\mb{A}^2/G]$ as the order of the group $G$ varies within certain familes of finite abelian subgroups of $GL_2$. The main tool is the combinatorics of \textit{balanced partitions} (Section \ref{sec:balanced}) and the proof is mostly combinatorial. To our knowledge, there is a priori no geometric relationship between the equivariant Hilbert schemes for the different groups we consider and it is an interesting question to understand why one might expect these results. 

\subsection{Statement of main results}
\label{sec:intro}

Let $G$ be a finite subgroup of $GL_2$. The stack quotient $[\mb{A}^2/G]$ of $\mb{A}^2$ by the action of $G$ is a smooth two dimensional orbifold with singular coarse moduli space $\mb{A}^2/G$. The Hilbert scheme of points $\Hilb^r[\mb{A}^2/G]$ is a $2r$-dimensional quasiprojective scheme parametrizing flat families of substacks of $[\mb{A}^2/G]$ with constant Hilbert polynomial $r$ \cite[Theorem 1.5]{quot}. Equivalently, $\Hilb^r[\mb{A}^2/G]$ is the moduli space of $G$-equivariant ideals $I \subset \mb{C}[x,y]$ such that $\mb{C}[x,y]/I \cong \mb{C}[G]^r$ as representations \cite[Proposition 2.9]{lili}. It is a union of irreducible components of the fixed locus $(\Hilb^{r|G|}(\mb{A}^2))^G$ \cite[Proposition 4.1]{brion}. In fact $\Hilb^r[\mb{A}^2/G]$ is smooth (Section \ref{sec:cotangent}).

There is a Hilbert-Chow morphism 

$$
\Hilb^r[\mb{A}^2/G] \to \Sym^r(\mb{A}^2/G)
$$

\noindent sending an ideal to its support in the coarse moduli space. The restriction of this morphism to the component of $\Hilb^r[\mb{A}^2/G]$ containing the locus of $r$ distinct free $G$-orbits is a resolution of singularities \footnote{When $G$ is abelian, $\Hilb^r[\mb{A}^2/G]$ is connected (Corollary \ref{cor:connectedness}) and so is itself a resolution. When $G \subset SL_2$, $\Hilb^r[\mb{A}^2/G]$ is a Nakajima quiver variety \cite[Theorem 2]{weiqiang} and so is connected \cite[Theorem 6.2]{quiverkacmoody}. The case for general $G$ is unknown to the authors.}.   When $r = 1$, $\Hilb^1[\mb{A}^2/G] \to \mb{A}^2/G$ is the minimal resolution \cite[Theorem 5.1]{kidoh} \cite[Theorem 3.1]{ishii}.

From now on, we restrict to $G$ abelian. In Section \ref{sec:CST} we will reduce our analysis to when the group is cyclic. To this end, we consider $G$ cyclic of order $n$ acting on $\mb{A}^2$ by $(x,y) \mapsto (\zeta^{a} x, \zeta^{b} y)$ where $\zeta$ is a primitive $n^{th}$ root of unity and $\gcd(a,b) = 1$. We will denote this group by $G_{a,b;n}$ and the equivariant Hilbert scheme $\Hilb^r[\mb{A}^2/G_{a,b;n}]$ by $H^r_{a,b;n}$. The first result of this article concerns the behavior of the compactly supported Betti numbers $b_i(H^r_{a,b;n})$. 

\begin{mainthma}\label{mainthma} Fix integers $r > 0$ and $a,b$ with $ab > 0$, or equivalently $a,b$ having the same sign. Then $b_i(H^r_{a,b;n}) = b_i(H^r_{a,b;n+ab})$ for all $n > rab$. 
\end{mainthma}

That is, the Betti numbers of $H^r_{a,b;n}$ are eventually periodic in $n$ with period $ab$. The proof of Main Theorem A uses the Bialynicki-Birula decomposition to stratify $H^r_{a,b;n}$ by locally closed affine cells. Thus the statement of Main Theorem A lifts to the Grothendieck ring of varieties $K_0(\mc{V}_\mb{C})$. 

\begin{thm}\label{thm:motivic} Fix integers $a,b$ with $ab > 0$. Then the class $[H^r_{a,b;n}]$ in $K_0(\mc{V}_\mb{C})$ is a polynomial in $\mb{L} = [\mb{A}^1]$ whose coefficients are periodic in $n$ with period $ab$ for $n > rab$. In particular, any motivic invariants of $H^r_{a,b;n}$ are eventually periodic in $n$.
\end{thm}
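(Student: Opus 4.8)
The plan is to upgrade the Bialynicki-Birula stratification already employed in the proof of Main Theorem A to an identity in $K_0(\mc{V}_\mb{C})$. Since $ab>0$ we may choose a one-parameter subgroup $\mb{C}^*\to(\mb{C}^*)^2$ with both weights positive; this contracts $\mb{A}^2$ to the origin and commutes with the diagonal action of $G_{a,b;n}$, hence acts on $H^r_{a,b;n}$. Its fixed points are precisely the $G$-invariant monomial ideals lying in $H^r_{a,b;n}$, i.e. the balanced partitions of Section \ref{sec:balanced}, and there are finitely many of them. The Hilbert-Chow morphism $H^r_{a,b;n}\to\Sym^r(\mb{A}^2/G_{a,b;n})$ is projective and $\mb{C}^*$-equivariant for the contracting action on the affine target, so every point of $H^r_{a,b;n}$ has a limit under the $\mb{C}^*$-action as $t\to 0$ lying in the fixed locus; together with the smoothness established in Section \ref{sec:cotangent} this makes $H^r_{a,b;n}$ semiprojective, and the Bialynicki-Birula decomposition exhibits it as a disjoint union of locally closed subsets, one for each balanced partition $\lambda$, each isomorphic to an affine space of some dimension $d(\lambda)$.

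First I would record the consequence in the Grothendieck ring. Additivity of $[-]$ over a finite stratification by locally closed subsets, together with $[\mb{A}^d]=\mb{L}^d$, gives
$$[H^r_{a,b;n}] \;=\; \sum_{\lambda\text{ balanced}} \mb{L}^{d(\lambda)} \;=\; \sum_{d\ge 0} c_d(n)\,\mb{L}^d, \qquad c_d(n):=\#\{\lambda\text{ balanced}: d(\lambda)=d\},$$
so $[H^r_{a,b;n}]$ is a polynomial in $\mb{L}$ (of degree $2r$). Because an affine paving arising from a Bialynicki-Birula decomposition computes compactly supported cohomology, $H^{2d}_c(H^r_{a,b;n})$ has dimension $c_d(n)$ and the odd compactly supported cohomology vanishes; hence $c_d(n)=b_{2d}(H^r_{a,b;n})$.

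It now suffices to invoke Main Theorem A: for $n>rab$ we have $b_i(H^r_{a,b;n})=b_i(H^r_{a,b;n+ab})$ for all $i$, hence $c_d(n)=c_d(n+ab)$ for every $d$, so the coefficients of the polynomial $[H^r_{a,b;n}]=\sum_d c_d(n)\mb{L}^d$ are periodic in $n$ with period $ab$ once $n>rab$. Finally, for any invariant factoring through $K_0(\mc{V}_\mb{C})$, say an additive map $\mu\colon K_0(\mc{V}_\mb{C})\to R$, the image $\mu([H^r_{a,b;n}])=\sum_d c_d(n)\,\mu(\mb{L})^d$ is a fixed $R$-linear combination of the periodic quantities $c_d(n)$, hence is eventually periodic in $n$ with period $ab$. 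The one point requiring care beyond the citation of Main Theorem A is the verification in the first paragraph that the chosen $\mb{C}^*$-action is semiprojective with isolated fixed points, so that the Bialynicki-Birula pieces are honest affine spaces rather than affine bundles over positive-dimensional fixed components; this is part of the apparatus built for Main Theorem A, so the argument here is essentially bookkeeping rather than a new idea.
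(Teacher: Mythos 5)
Your proposal is correct and takes essentially the same route as the paper: a generic contracting one-parameter subtorus gives the Bialynicki-Birula affine paving, additivity in $K_0(\mc{V}_\mb{C})$ yields $[H^r_{a,b;n}]=\sum_{\lambda\in B^r_{a,b;n}}\mb{L}^{\beta(\lambda)}$ (Proposition \ref{prop:motivic}), and periodicity of the coefficients for $n>rab$ rests on the same combinatorial input that proves Main Theorem A. Your two minor deviations --- deducing existence of limits from properness of the Hilbert--Chow morphism over the contracted target rather than the paper's initial-ideal argument (Lemma \ref{lemma:limits}), and obtaining coefficient periodicity by identifying cell counts with compactly supported Betti numbers and quoting Main Theorem A instead of citing the Betti-statistic-preserving bijection of Theorem \ref{thm:bijection} directly --- are both valid and equivalent in content, provided (as you note) the one-parameter subgroup is chosen generically so the fixed points are the isolated monomial ideals.
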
 

\noindent When $a = b = 1$ and $n = 3$, this explains an observation of Gusein-Zade, Luengo, and Melle-Hernandez \cite[pg. 601]{GZLMH}. 

Our second main result examines the behavior of the topological invariants when $ab < 0$. Recall that a function $f: \mb{Z} \to \mb{Z}$ is called quasipolynomial of period $k$ if there are a polynomials $p_1, \ldots, p_k$ such that $f(n) = p_l(n)$ where $n \equiv l \mod k$. 

\begin{mainthmb}\label{mainthmb} Fix integers $r > 0$ and $a,b$ such that $ab < 0$, i.e. with opposite sign. Then the topological Euler characteristic $\chi_c(H^r_{a,b;n})$ is a quasipolynomial in $n$ with period $|ab|$ for all $n \gg 0$. 
\end{mainthmb}

\begin{rem} With a finer combinatorial analysis we can prove a strengthening of Main Theorem B to show that quasipolynomiality holds for Betti numbers and classes in the Grothendieck ring. Furthermore, one can show that quasipolynomiality holds for $n > r|ab|$ and that the quasipolynomial $\chi(H^r_{a,b;n})$ is of degree $r$. This will appear in forthcoming work. \end{rem}

\subsection{Background and motivation} 

Equivariant Hilbert schemes were first introduced by Ito and Nakamura \cite{in1} for finite subgroups $G \subset SL_2$. They play a central role in the Mckay correspondence (see for example \cite{reid, bkr, bezrufinkel}). Indeed much of the geometry of $\Hilb^r[\mb{A}^2/G]$ is determined in this case by the representation theory of $G$. On the other hand, very little is known about equivariant Hilbert schemes for general finite subgroups $G \subset GL_2$ (apart from the case $r = 1$, see for example \cite{kidoh, ishii}). 

This is the first paper in a project to understand the geometry of equivariant Hilbert schemes for abelian subgroups of $GL_2$ using the combinatorics of balanced partitions (Section \ref{sec:balanced}). The main theorems of this paper show new phenomena that appear only when we let the group vary outside of $SL_2$. These results are similar in spirit to the work of G\"ottsche \cite{gottsche}, Nakajima \cite{nakajima1} and others which show that one should study Hilbert schemes all at once, though in our case for all groups rather than for all $r$. 

Balanced partitions carry much more geometric information than just topological invariants. For example they determine an open affine cover of $\Hilb^r[\mb{A}^2/G_{a,b;n}]$  whose coordinate rings can be written purely combinatorially from the partitions (Section \ref{sec:geometry}). The hope is that the combinatorial bijections used in the proofs of Theorems \ref{thm:bijection} and \ref{thm:qpolynomial} have an interpretation on the level of the equivariant Hilbert schemes themselves that will lead to a geometric explanation for the periodicity and quasipolynomiality phenomena. 

\subsubsection{Toric resolutions and continued fractions}\label{sec:toric} The particular case of $r = 1$, $a = 1$ and $b = k > 0$ is instructive. Then $\mb{A}^2/G_{1,k;n}$ is the affine toric variety corresponding to the cone generated by $(0,1)$ and $(n,-k)$ and $\Hilb^1[\mb{A}^2/G_{1,k;n}]$ is the toric minimal resolution. It then follows from a result of Hirzebruch \cite[Theorem 10.2.3]{cls} that the Poincare polynomial of $H^r_{1,k;n}$ is of the form $P_{H^r_{1,k;n}}(z) = lz^2 + z^4$ where $l$ is the length of Hirzebruch-Jung continued fraction expansion 

$$
\frac{n}{k} = [[a_1, \ldots, a_l]] := a_1 - \frac{1}{a_2 - \frac{1}{a_3 \ddots \frac{1}{a_l}}}
$$

\noindent and $a_l > 1$.  This is evidently periodic in $n$ with period $k$. 

\begin{figure}[h]
\label{fig:supplementary}
\includegraphics[scale = .35]{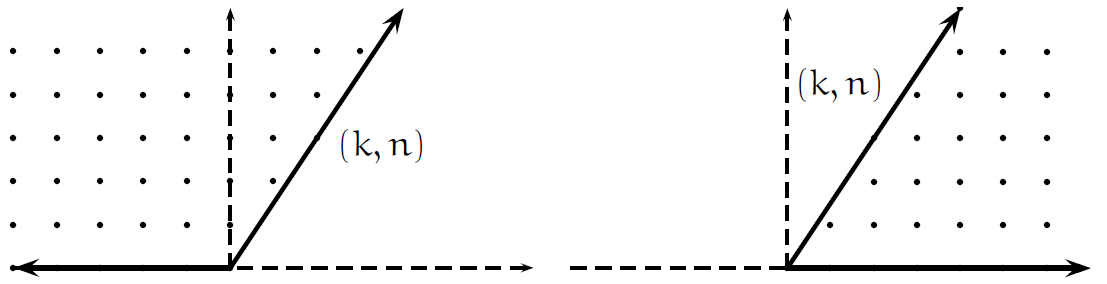}
\caption{\small The supplementary cones which correspond to the affine toric varieties $\mb{A}^2/G_{1,k;n}$ and $\mb{A}^2/G_{1,-k;n}$.}
\end{figure}

A similar computation when $r = 1$, $a = 1$ and $b = -k < 0$ yields the singular toric variety with supplementary cone. Then the Poincare polynomial takes the same form where $l$ is the length of the continued fraction expansion of $\frac{n}{n-k}$. Quasipolynomiality can then be deduced from a \textit{geometric} duality between the continued fractions of supplementary cones \cite[Proposition 2.7]{contfrac}. In fact, the length of the continued fraction expansion of $\frac{n}{n-k}$ is a linear quasipolynomial in $n$. 

For $r> 1$, we provide an analogue of the continued fraction expansion given by the set of balanced partitions defined below. We will see that the balanced partitions control the topology of the Hilbert scheme resolution of $\Sym^r(\mb{A}^2/G_{a,b;n})$ the same way the continued fraction controls the topology of the minimal resolution of $\mb{A}^2/G_{1,k;n}$. Furthermore, Theorems \ref{thm:bijection} and \ref{thm:qpolynomial} below, from which we deduce the main theorems, can be seen as a higher dimensional analogue of the geometric duality for continued fractions. 

\subsubsection{Future work and speculations} Ultimately, the goal is to understand the total cohomology 

$$
\mb{H}_{a,b;n} := \bigoplus_{r \geq 0} H_c^*(H^r_{a,b;n},\QQ)
$$

\noindent and compute its graded character which is the generating function of the Betti numbers $b_{i}(H^r_{a,b;n})$. When $(a,b) = (1,-1)$ so that $G_{1,-1;n} \subset SL_2$, $H^r_{1,-1;n}$ is diffeomorphic to $\Hilb^r(H^1_{1,-1;n})$ \cite[Lemma 4.1.3]{nagao} and the G\"ottsche formula \cite[Theorem 0.1]{gottsche} computes this generating function as an infinite product. After specializing to the Euler characteristic, we can deduce the formula

$$
\sum_{r \geq 0} \chi_c(H^r_{1,-1;n}) t^r = \left(\prod_{i \geq 1} \frac{1}{1 - t^i} \right)^n
$$ 

\noindent from the cores-and-quotients bijection (see Proposition \ref{prop:trivialcore}). 

The work of Nakajima \cite{nakajima1, quiverkacmoody} explains these infinite product formulas using representation theory of infinite dimensional Lie algebras. In particular, $\mb{H}_{1,-1;n}$ is a highest weight irreducible representation of a certain Heisenberg Lie algebra and this action intertwines two natural bases of $\mb{H}_{1,-1;n}$ coming from cores-and-quotients \cite{nagao}. We expect a similar picture to be true for the more general equivariant Hilbert schemes $H^r_{a,b;n}$. 

\begin{ques}\label{ques} Does $\mb{H}_{a,b;n}$ carry a natural action of an infinite dimensional Lie algebra $\mc{H}_{a,b;n}$ that can be described combinatorially in terms of balanced partitions?
\end{ques}

\noindent Computer computations with balanced partitions suggest the answer to Question \ref{ques} is yes and furthermore that $\mc{H}_{a,b;n}$ is generated in degrees $r$ for $rab < n$. This particular bound is interesting because it is the bound appearing in Main Theorem \hyperref[mainthma]{A}. This suggests that if Question \ref{ques} has an affirmative answer, then there is some relationship between the Lie algebras $\mc{H}_{a,b;n}$ and $\mc{H}_{a,b;n+ab}$ and their representations on the corresponding cohomologies at least when $ab > 0$. 

Moreover, these computations suggests that the Betti number generating function for $H^r_{a,b;n}$ is in general not an infinite product when $G_{a,b;n}$ is not in $SL_2$, but rather is a quasimodular form that can be written as a finite sum of infinite products. This is part of a general picture that generating functions for sheaf counting invariants on surfaces have modular properties (see \cite{modularforms} for a survey on this phenomena). Indeed the Euler characteristics and Poincar\'e polynomials of $H^r_{a,b;n}$ are naive Donaldson-Thomas type invariants \footnote{See for example \cite{bridgeland} and \cite{BBS} for Hilbert scheme invariants from the point of view of Donaldson-Thomas theory.} and the modularity property, if true, would be an analogue of \textit{S-duality} \cite{vafawitten} for the the quotient orbifolds $[\mb{A}^2/G_{a,b;n}]$. It would then be an interesting question to consider how the structure of these generating functions interacts with the stabilization properties from Main Theorems \hyperref[mainthma]{A} and \hyperref[mainthma]{B}.

\subsection{Balanced partitions}
\label{sec:balanced}

Main Theorems \hyperref[mainthma]{A} and \hyperref[mainthmb]{B} are proved by expressing the invariants above in terms of counting certain colored partitions or Young diagrams. We call these \textit{balanced} partitions. 

A partition $\lambda$ of a natural number $m$ is a sequence of nonnegative integers $\lambda_1 \geq \ldots \lambda_l \geq 0$ such that $\lambda_1 + \ldots + \lambda_l = m$. We identify $\lambda$ with its Young diagram, which is a subset of $m$ boxes arranged as left justified rows so that the $i^{th}$ row contains $\lambda_i$ boxes. We view this as living inside the $\mb{Z}_{\geq 0}^2$ lattice and use notation as in the diagram below. We denote by $l(k)$ (resp $c(h)$) the number of blocks in the $k^{th}$ row (resp $h^{th}$ column) of $\lambda$. 

\begin{figure}[h]
\includegraphics[scale=.4]{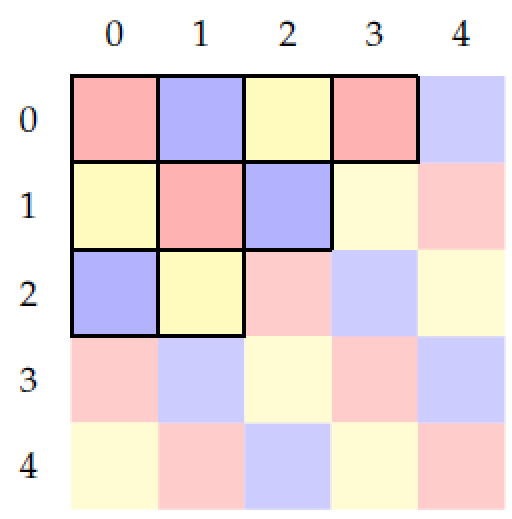}
\caption{\small The Young diagram corresponding to the partition $(4,3,2)$ of $9$. It is $(1,-1;3)$ balanced and the colors are the residue classes $\pmod{3}$.}
\end{figure}

Anticipating that the boxes $(i,j)$ correspond to monomials $x^iy^j$ having $G_{a,b;n}$-weight $ai + bj$, we color the partitions by the monoid homomorphism $w: \ZZ_{\geq 0}^2 \to \ZZ/n\ZZ$ that assigns $a i + b j \mod n$ to each $(i,j) \in \ZZ_{\geq 0}^2$. In particular this assigns a color viewed as an element of $G_{a, b; n}$ to each box in $\lambda$. We say $\lambda$ is an $(a,b;n)$\textit{-balanced} partition if there exists an $r$ such that $\lambda$ contains exactly $r$ boxes colored by $s$ for each residue class $s$ modulo $n$. In particular, any such $\lambda$ must be a partition of $rn$. 

Denote the set of all $(a,b;n)$-balanced partitions of $rn$ by $B^r_{a,b;n}$. There is a function $\beta : B^r_{a, b;n} \to \ZZ_{\geq 0}$ we call the \textit{Betti statistic} (Definition \ref{def:betti}). We will show the following proposition using the Bialynicki-Birula decomposition.

\begin{prop}\label{prop:betti} The Betti numbers of $H^r_{a,b;n}$ are given by

$$
b_i(H^r_{a,b;n}) = \#\{ \lambda \in H^r_{a,b;n} : 2\beta(\lambda) = i\}.
$$

\noindent In particular, the Poincar\'e polynomial $P_{H^r_{a,b;n}}(z)$ of $H^r_{a,b;n}$ satisfies 

$$
P_{H^r_{a,b;n}}(z) = \sum_{\lambda \in B^r_{a,b;n}} z^{2\beta(\lambda)}.
$$

\end{prop}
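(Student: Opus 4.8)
The plan is to establish Proposition~\ref{prop:betti} by realizing $H^r_{a,b;n}$ as a smooth quasiprojective variety carrying a suitable torus action, applying the Bialynicki-Birula decomposition, and then identifying the fixed points and their attracting cell dimensions combinatorially with balanced partitions and the Betti statistic $\beta$. First I would fix a one-parameter subgroup $\mathbb{G}_m \hookrightarrow (\mathbb{C}^*)^2$ acting on $\mathbb{A}^2$ that commutes with the $G_{a,b;n}$-action (so it descends to $[\mathbb{A}^2/G_{a,b;n}]$ and induces an action on $H^r_{a,b;n}$), chosen generically enough that the fixed locus is isolated. Since $H^r_{a,b;n}$ sits inside $\Hilb^{rn}(\mathbb{A}^2)$ as a union of components of the fixed locus for the $G$-action, its torus-fixed points are exactly the monomial ideals $I_\lambda \subset \mathbb{C}[x,y]$ that are $G_{a,b;n}$-equivariant with $\mathbb{C}[x,y]/I_\lambda \cong \mathbb{C}[G_{a,b;n}]^r$ as representations; unwinding the weight bookkeeping, these are precisely the Young diagrams $\lambda$ in which each residue class mod $n$ appears exactly $r$ times under the coloring $w(i,j) = ai+bj \bmod n$, i.e.\ the balanced partitions $\lambda \in B^r_{a,b;n}$. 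I would need to cite or re-derive the smoothness of $H^r_{a,b;n}$ (promised in Section~\ref{sec:cotangent}) and properness of the BB limits, which for a quasiprojective target requires a semiprojectivity/attractivity hypothesis — I would argue this using the contracting nature of the chosen $\mathbb{G}_m$ on $\mathbb{A}^2$ toward the origin, so that every point of $H^r_{a,b;n}$ has a limit.

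Next I would invoke the Bialynicki-Birula theorem: $H^r_{a,b;n}$ decomposes into locally closed affine cells $C_\lambda$ indexed by the fixed points $\lambda$, where $C_\lambda$ is the set of points flowing to $I_\lambda$, and $C_\lambda$ is an affine space of dimension equal to the number of positive weights of the $\mathbb{G}_m$-action on the tangent space $T_{I_\lambda} H^r_{a,b;n}$. Because all cells are even-(complex-)dimensional affine spaces, the decomposition is a cellular filtration and the compactly supported Betti numbers are simply $b_{2k}(H^r_{a,b;n}) = \#\{\lambda \in B^r_{a,b;n} : \dim_{\mathbb{C}} C_\lambda = k\}$ with odd Betti numbers vanishing; equivalently the class in $K_0(\mathcal{V}_{\mathbb{C}})$ is $\sum_\lambda \mathbb{L}^{\dim C_\lambda}$. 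The remaining task is to identify $\dim_{\mathbb{C}} C_\lambda$ with the Betti statistic $\beta(\lambda)$ of Definition~\ref{def:betti}. Here I would use the well-known description of the tangent space to the Hilbert scheme of points on $\mathbb{A}^2$ at a monomial ideal in terms of arm and leg lengths of boxes of $\lambda$ (the Ellingsrud--Str\o mme / Nakajima formula), take the $G_{a,b;n}$-invariant part to get $T_{I_\lambda} H^r_{a,b;n}$, and count how many of those tangent weights are made positive by the chosen generic $\mathbb{G}_m$ — this count, over the invariant sub-collection of hook data, is exactly what $\beta$ is defined to record. Summing $z^{2\dim C_\lambda}$ over all cells then yields the stated Poincar\'e polynomial identity.

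The main obstacle I anticipate is the bookkeeping at the last step: making the match between the $\mathbb{G}_m$-weights on the invariant tangent space and the combinatorial definition of $\beta$ completely precise, including the choice of which generic cocharacter to use and checking it is compatible with how Definition~\ref{def:betti} selects boxes (presumably via arms and legs whose colors match appropriately, or via a count of "balanced" sub-hooks). One must be careful that the invariant tangent space has the right dimension $2r$ (consistent with $\dim H^r_{a,b;n} = 2r$), that no tangent weight is zero for the generic choice (ensuring isolated fixed points and pure-dimensional cells), and that the formula is independent of the generic cocharacter chosen — this last point is automatic once we know the Betti numbers are a topological invariant, but it is reassuring to see it reflected combinatorially. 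A secondary, more technical obstacle is justifying the BB decomposition for the quasiprojective (non-proper) variety $H^r_{a,b;n}$; I would handle this by noting that the ambient $\Hilb^{rn}(\mathbb{A}^2)$ is semiprojective with respect to the contracting $\mathbb{G}_m$ and that $H^r_{a,b;n}$ is a closed (indeed smooth, $\mathbb{G}_m$-stable) subvariety, so the decomposition and the identification of compactly supported cohomology with the cell count both descend.
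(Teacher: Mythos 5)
Your proposal follows essentially the same route as the paper: a generic one-parameter subtorus of $(\mathbb{C}^*)^2$ acting on $H^r_{a,b;n}$, existence of limits (the paper proves this via initial-ideal/Gr\"obner degeneration, equivalent to your semiprojectivity argument), the Bialynicki-Birula decomposition into affine cells indexed by the torus-fixed monomial ideals, identification of those fixed points with balanced partitions, and a count of positive tangent weights at each fixed point matching $\beta(\lambda)$ for the specific cocharacter $p \gg q > 0$. The only cosmetic difference is that you phrase the tangent-space weights via the arm/leg (Ellingsrud--Str\o mme) formula while the paper uses Haiman's arrow coordinates $d_{i,j}, u_{i,j}$, which the paper itself remarks are equivalent descriptions.
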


\noindent The main theorems will then follow from the following combinatorial results. 
 
\begin{thm}\label{thm:bijection} Fix integers $r > 0$ and $a,b$ with $ab > 0$. There is a natural bijection $B^r_{a,b;n} \to B^r_{a,b;n+ab}$ that preserves the Betti statistic for $n > rab$. \end{thm}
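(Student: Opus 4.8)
\emph{Reductions.} Replacing $(a,b)$ by $(-a,-b)$ negates the weight homomorphism $w\colon(i,j)\mapsto ai+bj$, which is an automorphism of $\ZZ/n\ZZ$, and so changes neither $B^r_{a,b;n}$ nor the Betti statistic; thus I may assume $a,b>0$. The plan is to construct an explicit map $\Phi\colon B^r_{a,b;n}\to B^r_{a,b;n+ab}$ with $\Phi(\lambda)\supseteq\lambda$ (adjoining $rab$ boxes near the outer boundary of each balanced partition), to show $\Phi$ is bijective, and to show it preserves $\beta$.

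\emph{Structural input.} The essential first step is a normal form for $(a,b;n)$-balanced partitions $\lambda$ of $rn$ valid once $n>rab$. Along the bottom row the colours run through $0,a,2a,\dots\pmod n$, along the first column through $0,b,2b,\dots\pmod n$, and along each antidiagonal $\{ai+bj=\mathrm{const}\}$ the step is by $(b,-a)$; since each residue occurs exactly $r$ times in $\lambda$, none of these sequences can wrap around $\ZZ/n\ZZ$ more than $r$ times. I expect this to force $\lambda$ to decompose into a ``core'' lying in a box whose size is bounded in terms of $r,a,b$ only, glued to two long ``tails'' near the axes whose boundaries rigidly follow the staircase of the toric cone defining $\mb{A}^2/G_{a,b;n}$ --- the $r>1$ analogue of the Hirzebruch--Jung expansion of $n/b$ governing the $r=1$ case of Section~\ref{sec:toric}. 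The hypothesis $n>rab$ is precisely what simultaneously keeps the core bounded and makes the tails long enough to be rigid.

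\emph{The bijection.} Granting this normal form, define $\Phi(\lambda)$ by inserting one more ``period'' of the staircase tails --- the combinatorial shadow of decrementing the self-intersection of the first exceptional curve, i.e.\ of $[[a_1,\dots,a_l]]\mapsto[[a_1+1,a_2,\dots,a_l]]$. Equivalently, re-read every box of $\lambda$ modulo $n+ab$ and adjoin the $rab$ boxes, prescribed by the tail geometry, realising exactly the residues now missing; one checks $\Phi(\lambda)$ is a genuine Young diagram, is $(a,b;n+ab)$-balanced, and that $\Phi$ commutes with transposition (the $a\leftrightarrow b$ symmetry). For injectivity and surjectivity, observe that since $n+ab>rab$ every partition in $B^r_{a,b;n+ab}$ has, by the same normal form, a well-defined outermost removable staircase period; deleting it gives a two-sided inverse to $\Phi$ landing back in $B^r_{a,b;n}$, still with $n>rab$, and checking the two operations are mutually inverse is then bookkeeping.

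\emph{Invariance of $\beta$, and the main obstacle.} Finally one must check $\beta(\Phi(\lambda))=\beta(\lambda)$. Using the description of $\beta$ (Definition~\ref{def:betti}) coming from the Bialynicki--Birula decomposition --- a weighted count of boxes of $\lambda$ whose arm and leg satisfy a congruence modulo the group order together with a sign condition fixed by the chosen one-parameter subgroup --- I would show: (i) for boxes in the core the relevant arm/leg combinations lie in an interval of length $<n$, so the congruence is insensitive to replacing $n$ by $n+ab$; and (ii) the net contribution of the long staircase tails, including the newly inserted period, is the same for $n$ and $n+ab$, because along a tail the arm/leg data is periodic and the sign condition selects the same boxes independently of $n$. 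The main obstacle is the structural statement of the second paragraph: establishing the core-plus-tails normal form for balanced partitions, identifying precisely which period $\Phi$ inserts, and verifying that $\beta$ factors through the ``stable'' part of the data. Once that is in hand, the remaining steps --- well-definedness and bijectivity of $\Phi$, and the matching of tail contributions to $\beta$ --- are finite, if intricate, verifications.
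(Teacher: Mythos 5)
Your reduction to $a,b>0$ matches the paper, but the heart of your argument---the ``core-plus-tails normal form'' for balanced partitions and the resulting definition of $\Phi$ as ``inserting one more period of the staircase''---is exactly the part you leave unproved, and as written it is not a proof: you never specify which $rab$ boxes are adjoined, why the result is a Young diagram, or why it is $(a,b;n+ab)$-balanced. The paper needs no such global normal form; its key step is local in the colour classes. For every residue $k$ with $rab\le k\le n-1$ (this is where $n>rab$ enters), the diagonal $D_k=\{ai+bj=k\}$ contains at least $r+1$ lattice points, hence some point $(i_0(k),j_0(k))\notin\lambda$, and this point splits $S_k$ into a ``column part'' $A_k$ (where $i\le i_0$) and a ``row part'' $B_k$ (where $j\le j_0$); the map $\varphi_k$ sending $A_k\ni(i,j)\mapsto(i,j-a)$ and $B_k\ni(i,j)\mapsto(i-b,j)$ is shown, using balancedness, to be a bijection onto $S_{k-ab}$ (Lemma \ref{lemma:armandleg}, Corollary \ref{cor:armandleg}). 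The bijection $\psi$ then inserts $a$ boxes below each box of $A_k$ with $n-b\le k\le n-1$ and $b$ boxes to the right of each box of $B_k$ with $n-a\le k\le n-1$; Lemma \ref{lemma:armandleg} is what guarantees the result is again a partition, and $\varphi$ is what shows the inserted boxes realise each new residue $n\le k\le n+ab-1$ exactly $r$ times, with deletion of those boxes as the inverse. Your proposal contains no substitute for this mechanism, so the well-definedness, balancedness and bijectivity of $\Phi$ all remain open.

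The $\beta$-invariance sketch also has a concrete flaw: your step (i) treats ``core'' boxes as if their arm/leg data were unchanged, but the insertion lengthens rows and columns passing through the core, so the arms and legs of old boxes do change, and a congruence-window argument of length $<n$ does not apply to them as stated. The paper instead tracks each invariant arrow through the box-correspondence: $\psi$ stretches $d_{i,j}$ and $u_{i,j}$ without changing their directions, and because the boxes with labels $0\le k\le n-1$ correspond bijectively on the two sides, an arrow is invariant mod $n$ for $\lambda$ exactly when its stretched image is invariant mod $n+ab$ for $\psi(\lambda)$; this is what preserves the count of Definition \ref{def:betti}. Some version of this bookkeeping would be needed in your approach as well, and it again presupposes knowing exactly which boxes are inserted where.
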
 

\begin{thm}\label{thm:qpolynomial} Fix integers $r > 0$ and $a,b$ with $ab < 0$. The cardinality $\# B^r_{a,b;n}$ is a quasipolynomial in $n$ of period $|ab|$ for $n \gg 0$. \end{thm}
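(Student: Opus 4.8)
The plan is to reduce the count $\# B^r_{a,b;n}$ to a lattice-point count in a family of polytopes whose defining data varies linearly (and periodically) in $n$, and then invoke Ehrhart-type quasipolynomiality. Since $ab<0$ we may assume $a>0>b$, say $b=-c$ with $c>0$; the weight homomorphism is $w(i,j)=ai-cj \bmod n$. First I would set up a finite combinatorial model for balanced partitions that is independent of $n$ in its "shape" data: a partition $\lambda$ of $rn$ is $(a,-c;n)$-balanced precisely when, for every residue $s\bmod n$, the number of boxes $(i,j)\in\lambda$ with $ai-cj\equiv s$ equals $r$. The key observation is that for $n\gg 0$ (relative to $r$, $a$, $c$) a balanced partition cannot be too "wide" or too "tall": I expect a bound of the shape $\lambda_1, \ell(\lambda) = O(rn)$ but more usefully that the \emph{number of distinct part sizes}, or the number of "corners" of the Young diagram, is bounded independently of $n$ — this is the analogue of the fixed continued-fraction length in the $r=1$ toric picture. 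Establishing such a structural bound is, I expect, the main obstacle: one must show that balancedness forces the diagram to be built from a bounded number of large "rectangular blocks" whose side lengths are the free parameters, because the weight function $ai-cj$ is constant along the anti-diagonals $ai-cj=\text{const}$ and each residue class must be hit exactly $r$ times, which severely constrains how the boundary of $\lambda$ can meander.

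Granting that structural description, I would encode a balanced partition by the tuple of block dimensions $(p_1,q_1,\dots,p_k,q_k)$ with $k$ bounded, subject to: (i) the monotonicity/nesting inequalities that make the blocks assemble into a genuine Young diagram (linear inequalities in the $p$'s and $q$'s), and (ii) the balancedness constraints. Constraint (ii) is the delicate one: for each residue $s$, the count of boxes of color $s$ is a sum of contributions of the form $\#\{(i,j)\in R : ai-cj\equiv s \bmod n\}$ over the rectangular blocks $R$, and each such count, for $n$ larger than the relevant side lengths, is a piecewise-linear "sawtooth" function of the block dimensions and of $n$. Here I would split according to residues of $n$ modulo $|ab|=ac$ (and possibly modulo $\lcm$-type quantities), so that on each residue class the floor functions appearing in these counts become genuine linear functions of $n$ and the block parameters. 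On each such class the balancedness conditions become a system of linear equalities and inequalities with coefficients depending linearly on $n$, cutting out a polytope $P_n \subset \RR^{2k}$; its integer points biject with the relevant balanced partitions.

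Finally, I would argue that $P_n$ is a translate/dilate of a fixed rational polytope by an affine-linear function of $n$ — more precisely that the family $\{P_n\}$ is what is sometimes called a "parametric" or "inhomogeneous" rational polytope with $n$ entering linearly — so that by the Ehrhart quasipolynomiality theorem for such families (see e.g. the theory of vector partition functions / Blakley, or Woods–Yoshida), the number of lattice points $\#(P_n\cap\ZZ^{2k})$ is a quasipolynomial in $n$ for $n\gg 0$, with period dividing the least common multiple of the denominators, which by our residue bookkeeping divides $|ab|$. Summing the (finitely many, by the boundedness of $k$) contributions over the bounded combinatorial types of block configurations preserves quasipolynomiality and the period still divides $|ab|$, giving the claim. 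The two places where care is needed beyond routine bookkeeping are: proving the uniform bound on the number of blocks $k$ (the structural heart of the argument), and checking that after the residue-of-$n$ split the sawtooth counts genuinely linearize with the claimed period $|ab|$ rather than some larger multiple — the latter is exactly where the hypothesis $ab<0$ versus $ab>0$ matters, since for $ab>0$ the analogous polytope family stabilizes rather than growing, recovering Theorem \ref{thm:bijection}.
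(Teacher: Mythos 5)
Your strategy (a uniform bound on the combinatorial complexity of a balanced partition, followed by lattice-point counting in a parametric polytope) is a genuinely different route from the paper's, which instead replaces each box by an $a\times|b|$ rectangle to reduce to $(1,-1;n)$-balanced partitions, i.e.\ partitions with empty $n$-core, and then counts these via the abacus and cores-and-quotients: for $n\gg 0$ every admissible partition arises from one of finitely many ``chunk'' configurations by inserting strings of $|ab|$ aligned empty runners, so the count is a sum of binomial coefficients in $n$ on each residue class (Propositions \ref{prop:expand} and \ref{prop:qpolynomial}). Of your two flagged difficulties, the structural bound is actually the easy one, and you should prove rather than postulate it: writing $b=-c$ with $c>0$, the weight $ai+bj$ is constant along the direction $(i,j)\mapsto(i+c,j+a)$, and since $\lambda$ is closed under componentwise decrease, the line through any outer corner $(x,y)$ already contributes $\min(\lfloor x/c\rfloor,\lfloor y/a\rfloor)+1$ boxes of a single residue class; balancedness then forces $x<rc$ or $y<ra$ for every corner, so there are at most $r(a+c)=r(a+|b|)$ corners, uniformly in $n$.

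The genuine gap is the second step, which your proposal treats as bookkeeping but which is where the entire content of the theorem lives. Balancedness is one condition for \emph{each} of the $n$ residue classes, so the number of constraints grows with $n$; the parametric Ehrhart/vector-partition-function results you invoke apply to families cut out by a \emph{fixed} number of linear conditions whose data varies (quasi-)linearly in the parameter. To use them you must first compress the $n$ conditions into a bounded system of equations, inequalities and congruences in the corner coordinates whose dependence on $n$ is linear up to the residue $n\bmod|ab|$. Your assertion that ``the sawtooth counts genuinely linearize with period $|ab|$'' is precisely this compression, and nothing in the proposal establishes it: the per-residue count in a block is a floor-type function of linear forms divided by $n$, with the residue itself ranging over $n$ values, and it is not evident that these assemble into finitely many constraints with the claimed period (indeed the paper needs the reduction to $\gcd(a,n)=\gcd(b,n)=1$ of Corollary \ref{cor:coprime} before any such residue analysis, a reduction your setup would also require). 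The paper's rectangle blow-up and abacus argument is exactly one way of carrying out this missing compression; without it, or a substitute for it, your argument does not yet prove quasipolynomiality, let alone the period $|ab|$.
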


\subsection{Acknowledgments} The authors would like to thank T. Graber for suggesting this project and helping with the early stages. We are grateful to W. Hann-Caruthers for helping with the computational aspects that led us to conjecture the main theorems, and to L. Li for providing us with a draft of the unfinished manuscript \cite{lili} from which we learned many of the ideas in Sections 2 and 3. D.B. would like to thank his advisor D. Abramovich for his constant help and encouragement without which this paper would have never materialized. Finally, we would like to thank J. Ali, K. Ascher, S. Asgarli, D. Ranganathan and A. Takeda for many helpful comments on this draft. D.B. was partially supported by a Caltech Summer Undergaduate Research Fellowship and NSF grant DMS-1162367.

\section{The geometry of $H^r_{a,b;n}$}
\label{sec:geometry} 

In this section we give a systematic description of the geometry of $H^r_{a,b;n}$. We discuss the natural torus action on $H^r_{a,b;n}$ as well as smoothness and irreducibility. 

\subsection{Torus actions} 

The algebraic torus $T = (\mb{C}^*)^2$ acts naturally on $\mb{A}^2$ or equivalently on $\mb{C}[x,y]$ by $(t_1,t_2)(x,y) = (t_1x,t_2y)$. This induces an action on $\Hilb^m(\mb{A}^2)$ by pulling back ideals,

$$
(t_1,t_2)\cdot I = (\{f(t_1x, t_2y) : f \in I\}).
$$

\noindent The fixed points of this action are the doubly homogeneous ideals, that is, the monomial ideals. These are in one-to-one correspondence with partitions $\lambda$ of $m$ by the assignment

$$
\lambda \mapsto I_\lambda = (\{x^ry^s : (r,s) \in \ZZ_{\geq 0}^2 \setminus \lambda\}).
$$

\noindent Define $\mc{B}_\lambda = (\{x^hy^k : (h,k) \in \lambda\})$. It is clear that $\mc{B}_\lambda$ forms a basis for $\mb{C}[x,y]/I_\lambda$ so that $I_\lambda \in \Hilb^m(\mb{A}^2)$. 

Every monomial ideal is fixed by $G_{a,b;n}$. However, $I_\lambda \in H^r_{a,b;n}$ if and only if $\mb{C}[x,y]/I_\lambda = \mb{C}\mc{B}_\lambda$ is isomorphic as a $G_{a,b;n}$ representation to $\mb{C}[G_{a,b;n}]^r$. The space $\mb{C}\mc{B}_\lambda$ decomposes as a direct sum of irreducible representations $\mb{C}x^iy^j$ for $(i,j) \in \lambda$ each with weight $ai + bj \mod n$. Since $\mb{C}[G_{a,b;n}]$ decomposes as a direct sum of one copy of each irreducible representation, $\mb{C}[G_{a,b;n}]^r$ must have $r$ copies of each. Thus each weight must appear $r$ times in the decomposition of $\mb{C}\mc{B}_\lambda$ so we have proved the following:

\begin{lemma}\label{lemma:balanced} The $(\mb{C}^*)^2$-fixed points in $H^r_{a,b;n}$ are in one to one correspondence with $B^r_{a,b;n}$, the set of $(a,b;n)$-balanced partitions of $rn$. \end{lemma}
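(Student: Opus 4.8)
The plan is to cut the standard description of the $T$-fixed locus of the ambient Hilbert scheme down to $H^r_{a,b;n}$; most of this is already contained in the discussion preceding the statement, and what remains is to organize it. \emph{First} I would check that the $T = (\mb{C}^*)^2$-action on $\Hilb^{rn}(\mb{A}^2)$ preserves $H^r_{a,b;n}$. Because $G_{a,b;n}$ is a subgroup of the diagonal torus $T$, the two actions commute, so $T$ preserves the fixed subscheme $(\Hilb^{rn}(\mb{A}^2))^{G_{a,b;n}}$; and for $t \in T$ and a $G_{a,b;n}$-equivariant ideal $I$, the ideal $t\cdot I$ is again $G_{a,b;n}$-equivariant, with the evident linear isomorphism $\mb{C}[x,y]/I \to \mb{C}[x,y]/(t\cdot I)$ being $G_{a,b;n}$-equivariant, hence preserving the representation type $\mb{C}[G_{a,b;n}]^r$. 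Since $H^r_{a,b;n}$ is a union of irreducible components of $(\Hilb^{rn}(\mb{A}^2))^{G_{a,b;n}}$ (recalled in Section \ref{sec:intro}) and $T$ is connected, $T$ permutes these components trivially, so $H^r_{a,b;n}$ is $T$-invariant.

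\emph{Next}, a $T$-fixed point of $H^r_{a,b;n}$ is in particular a $T$-fixed point of $\Hilb^{rn}(\mb{A}^2)$, hence a monomial ideal $I_\lambda$ with $\lambda$ a partition of $rn$, and conversely every monomial ideal is $T$-fixed; so the problem reduces to deciding for which $\lambda$ one has $I_\lambda \in H^r_{a,b;n}$. Here I would use the decomposition $\mb{C}[x,y]/I_\lambda = \mb{C}\mc{B}_\lambda = \bigoplus_{(i,j)\in\lambda} \mb{C}\,x^iy^j$ into one-dimensional $G_{a,b;n}$-representations, where $\mb{C}\,x^iy^j$ carries the character $\zeta \mapsto \zeta^{ai+bj}$, i.e.\ the irreducible of weight $w(i,j) = ai+bj \bmod n$. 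The $n$ irreducible characters of $G_{a,b;n}$ are exactly $\chi_s \colon \zeta \mapsto \zeta^s$ for $s \in \ZZ/n\ZZ$, and the regular representation $\mb{C}[G_{a,b;n}]$ contains each with multiplicity one; therefore $\mb{C}\mc{B}_\lambda \cong \mb{C}[G_{a,b;n}]^r$ if and only if each color $s$ occurs exactly $r$ times among the boxes of $\lambda$ — which is precisely the condition that $\lambda$ be $(a,b;n)$-balanced (and forces $|\lambda| = rn$). Thus $\lambda \mapsto I_\lambda$ defines the claimed bijection between $B^r_{a,b;n}$ and the $T$-fixed points of $H^r_{a,b;n}$.

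I do not expect a genuine obstacle here: the substance of the argument is the elementary character bookkeeping of the second paragraph, which is essentially displayed in the text before the statement. The only step needing a moment's care is the first one, namely that $T$ stabilizes $H^r_{a,b;n}$ itself rather than merely the larger fixed locus $(\Hilb^{rn}(\mb{A}^2))^{G_{a,b;n}}$; this follows from the commuting of $T$ with $G_{a,b;n}$ together with the connectedness of $T$, and would be the one sentence I would be sure to include.
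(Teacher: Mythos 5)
Your argument is correct and is essentially the paper's own: the paper identifies the $T$-fixed points of $\Hilb^{rn}(\mb{A}^2)$ with monomial ideals $I_\lambda$ and then observes that $I_\lambda \in H^r_{a,b;n}$ exactly when each weight $ai+bj \bmod n$ occurs $r$ times among the boxes of $\lambda$, i.e.\ when $\lambda$ is $(a,b;n)$-balanced. Your first paragraph (that $T$ preserves $H^r_{a,b;n}$ because it commutes with $G_{a,b;n}$ and is connected) is a careful spelling-out of a point the paper only notes in passing, not a different approach.
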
 

\subsection{Local theory of Hilbert schemes} In this section we recall facts about the local geometry of $\Hilb^m(\mb{A}^2)$ following Haiman's description given in \cite{haiman}. 

One can define a torus invariant open affine neighborhood $U_\lambda$ of $I_\lambda$ given by

$$
U_\lambda := \{I \ : \ \mb{C}[x,y]/I \ \text{ is spanned by } B_\lambda\} \subset \Hilb^m(\mb{A}^2).
$$

\noindent The coordinate functions on $U_\lambda$ are given by $c^{l,s}_{i,j}(I)$ for $(i,j) \in \lambda$ and $(l,s) \in \ZZ_{\geq 0}^2$ where

\begin{equation}
\label{eq_1}
x^ly^s = \sum_{(i,j) \in \lambda} c^{l,s}_{i,j}(I) x^iy^j \mod I. 
\end{equation}

\noindent Multiplying $\hyperref[eq_1]{(1)}$ by $x$ we obtain 

$$
x^{l + 1}y^s = \sum_{(h,k) \in \lambda} c^{l,s}_{h,k} x^{h+1}y^k = \sum_{(h,k) \in \lambda} \sum_{(i,j) \in \lambda} c^{l,s}_{h,k} c^{h + 1,k}_{i,j} x^i y^j
$$

\noindent Therefore the coefficients satisfy the relations

\begin{equation}
\label{eq_2}
c^{l+1,s}_{i,j} = \sum_{(h,k) \in \lambda} c^{l,s}_{h,k} c^{h+1,k}_{i,j}.
\end{equation}

\noindent Similarly, we obtain the relation

\begin{equation}
\label{eq_3}
 c^{l,s+1}_{i,j} = \sum_{(h,k) \in \lambda} c^{l,s}_{h,k} c^{h,k+1}_{i,j} 
\end{equation}

\noindent by multiplying by $y$. 

We will often denote the function $c^{l,s}_{i,j}$ as an arrow on the on the $\mb{Z}_{\geq 0}^2$ grid pointing from box $(l,s) \in \mb{Z}_{\geq 0}^2$ to box $(i,j) \in \lambda$. These functions $c^{l,s}_{i,j}$ are torus eigenfunctions with action given by 

$$
(t_1,t_2)\cdot c^{l,s}_{i,j} = t_1^{l - i}t_2^{s-j} c^{l,s}_{i,j}.
$$

\noindent Consequently, $G_{a,b;n}$ acts by 

$$
c^{l,s}_{i,j} \mapsto \zeta^{a(l - i) + b(s-j)}c^{r,s}_{i,j}.
$$

\noindent The actions commute so that $\Hilb^{rn}(\mb{A}^2)^{G_{a,b;n}}$, and thus $H^r_{a,b;n}$, inherits a $(\mb{C}^*)^2$ action. 

For each box $(i,j) \in \lambda$, define two distinguished coordinate functions

\begin{equation}
\label{def1}
d_{i,j} := c^{l(j),j}_{i,c(i)-1} \enspace \enspace \enspace u_{i,j} := c^{i,c(i)}_{l(j)-1,j}
\end{equation}

\noindent where $l(j)$ is the size of the $j^{th}$ row and $c(i)$ the size of the $i^{th}$ column of $\lambda$. We can picture $d_{i,j}$ and $u_{i,j}$ as southwest and northeast pointing arrows hugging the diagram. Note that each diagram has $2m$ such distinguished arrows associated to it, two for each box.

\begin{figure}[h]
\includegraphics[scale=0.4]{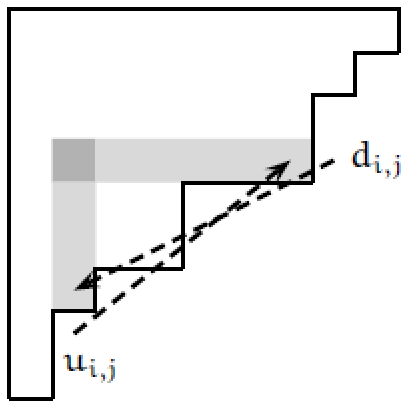}
\caption{\small The distinguished arrows $d_{i,j}$ and $u_{i,j}$ for the box $(i,j)$ in dark gray.}
\end{figure}

Now we can use these arrows to understand cotangent space to $I_\lambda \in \Hilb^m(\mb{A}^2)$ which we will denote $T_\lambda^*\Hilb^m(\mb{A}^2):= \mf{m}(I_{\lambda})/\mf{m}(I_{\lambda})^2$. The set of $c^{l,s}_{i,j}$ vanishing at $I_\lambda$ are precisely the ones for $(l,s) \notin \lambda$. These form generators for $T_\lambda^*\Hilb^m(\mb{A}^2)$. The relation $\hyperref[eq_2]{(2)}$ expresses $c^{l+1,s}_{i,j}$ as $c^{l,s}_{i-1,j} + \text{(higher order terms)}$ since $c^{i,j}_{i,j} \equiv 1$ and $c^{i',j'}_{i,j} \equiv 0$ for $(i,j) \neq (i',j') \in \lambda$. Thus

\begin{equation}
\label{eq_5}
c^{l+1,s}_{i,j} = c^{l,s}_{i-1,j} \mod \mf{m}(I_{\lambda})^2
\end{equation}

\noindent as local parameters in $T_\lambda^*\Hilb^m(\mb{A}^2)$. Similarly, $\hyperref[eq_3]{(3)}$ implies that 

\begin{equation}
\label{eq_6}
c^{l,s+1}_{i,j} = c^{l,s}_{i,j-1} \mod \mf{m}(I_{\lambda})^2
\end{equation}

\noindent in $T_\lambda^*\Hilb^m(\mb{A}^2)$. 

If we denote $c^{l,s}_{i,j}$ as an arrow, then $\hyperref[eq_5]{(5)}$ and $\hyperref[eq_6]{(6)}$ imply that if we slide an arrow horizontally or vertically while keeping $(l,s) \in \ZZ_{\geq 0}^2 \setminus \lambda$ and $(i,j) \notin \mb{Z}_{\geq 0}^2 \setminus \lambda$ then the arrow represents the same local parameter in $T^*_\lambda \Hilb^m(\mb{A}^2)$. Furthermore, if an arrow can be moved so that the head leaves the $\mb{Z}_{\geq 0}^2$ grid, then it is identically zero in $T^*_\lambda \Hilb^m(\mb{A}^2)$ because only positive degree monomials appear in $\CC[x,y]$. In this way every northwest pointing arrow vanishes in $T_\lambda^*\Hilb^m(\mb{A}^2)$ and any southwest or northeast pointing arrow can be moved until it either vanishes or is of the form $d_{i,j}$ or $u_{i,j}$ respectively. This proves the following:

\begin{prop}\label{prop:smooth}(\cite[Proposition 2.4]{haiman}, \cite[Theorem 2.4]{fogarty}) The set $\{d_{i,j},u_{i,j}\}$ over $(i,j) \in \lambda$ forms a system of local parameters generating the cotangent space of $I_\lambda \in \Hilb^m(\mb{A}^2)$. In particular, $\Hilb^m(\mb{A}^2)$ is smooth. \end{prop}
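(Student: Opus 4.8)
The plan is to bound $\dim_\CC T^*_\lambda\Hilb^m(\mb{A}^2)$ above by $2m$ at every monomial ideal $I_\lambda$, to deduce smoothness at $I_\lambda$ by comparing with $\dim\Hilb^m(\mb{A}^2)$, and finally to propagate smoothness to the whole scheme via the torus action. First I would record a generating set for the cotangent space $T^*_\lambda=\mf{m}(I_\lambda)/\mf{m}(I_\lambda)^2$: on the affine chart $U_\lambda$ the coordinate ring is generated by the $c^{l,s}_{i,j}$ with $(i,j)\in\lambda$ and $(l,s)\in\ZZ_{\geq0}^2$, subject to \hyperref[eq_2]{(2)}, \hyperref[eq_3]{(3)} and the normalizations $c^{i,j}_{i,j}=1$, $c^{i',j'}_{i,j}=0$ for $(i,j)\neq(i',j')$ in $\lambda$; since $c^{l,s}_{i,j}$ vanishes at $I_\lambda$ exactly when $(l,s)\notin\lambda$, the classes of these functions span $T^*_\lambda$.

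The heart of the argument is to cut this spanning set down to the $2m$ distinguished arrows. Modulo $\mf{m}(I_\lambda)^2$, relations \hyperref[eq_5]{(5)} and \hyperref[eq_6]{(6)} say that an arrow $c^{l,s}_{i,j}$ may be translated by $(\pm1,0)$ or $(0,\pm1)$ without changing its class, provided the tail stays in $\ZZ_{\geq0}^2\setminus\lambda$ and the head in $\ZZ_{\geq0}^2$, and that an arrow whose head can be slid off $\ZZ_{\geq0}^2$ represents $0$. I would then run a case analysis on the direction of the arrow: using the staircase shape of $\lambda$, a northwest-pointing arrow can be slid until its head leaves the grid, so it is $0$; while a southwest- or northeast-pointing arrow can be slid along the boundary of $\lambda$ until it either exits the grid, giving $0$, or hugs the diagram and becomes one of the $d_{i,j}$, respectively $u_{i,j}$, of \hyperref[def1]{(4)}. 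Conversely each $d_{i,j}$ and $u_{i,j}$ is itself such a coordinate, so $T^*_\lambda$ is spanned by $\{d_{i,j},u_{i,j}:(i,j)\in\lambda\}$ and $\dim_\CC T^*_\lambda\leq 2m$.

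To conclude: $\Hilb^m(\mb{A}^2)$ is irreducible of dimension $2m$ (the closure of the locus of $m$ distinct reduced points), so its local dimension at $I_\lambda$ is $2m$; hence $\dim_\CC T^*_\lambda\geq 2m$, forcing equality and smoothness at $I_\lambda$. The non-smooth locus is then closed and invariant under a generic one-parameter subgroup of $(\mb{C}^*)^2$ for which every limit $\lim_{t\to0}t\cdot I$ exists and is a monomial ideal; were it nonempty it would contain such a limit, contradicting smoothness at all monomial ideals, so $\Hilb^m(\mb{A}^2)$ is smooth. The step I expect to be the actual work is the combinatorial reduction in the middle paragraph — checking that repeated sliding carries an arbitrary vanishing coordinate to exactly one of the $2m$ canonical arrows or to zero, with no arrow left stuck and no distinguished arrow missed. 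This is where the boundary staircase of $\lambda$ genuinely enters, and where one must check that no translation ever pushes the tail back into $\lambda$.
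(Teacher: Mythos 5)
Your proposal matches the paper's own argument: the paper establishes exactly the same spanning statement by generating $T^*_\lambda$ with the $c^{l,s}_{i,j}$, $(l,s)\notin\lambda$, sliding arrows via the relations \hyperref[eq_5]{(5)} and \hyperref[eq_6]{(6)}, killing every northwest-pointing arrow, and reducing the southwest- and northeast-pointing ones to the distinguished $d_{i,j}$ and $u_{i,j}$. The only difference is that the smoothness conclusion, which the paper simply outsources to the cited results of Haiman and Fogarty, you fill in with the standard dimension comparison plus torus-limit argument; this is fine, but be aware that invoking irreducibility of $\Hilb^m(\mb{A}^2)$ for the lower bound $\dim_\CC T^*_\lambda \geq 2m$ risks circularity (Fogarty's irreducibility is usually deduced from smoothness), so one should instead justify that each monomial ideal lies on a $2m$-dimensional component directly, e.g.\ by exhibiting $I_\lambda$ as an initial ideal of the radical ideal of the point configuration $\{(i,j):(i,j)\in\lambda\}$, which shows it is a flat limit of $m$ distinct points.
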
 

%\begin{proof} A function $c^{r,s}_{i,j}$ vanishes at $I_\lambda$ if and only if $(r,s) \notin \lambda$ and so the set of $c^{r,s}_{i,j}$ with $(r,s) \notin \lambda$ generate the cotangent space which we will denote $T^*_\lambda$. It follows from relation $(*)$ that $c^{r+1,s}_{i,j} = c^{r,s}_{i-1,j}$ in $T^*_\lambda$. Similarly, $(**)$ implies that $c^{r,s+1}_{i,j} = c^{r,s}_{i,j-1}$ in $T^*_\lambda$. 

%Diagrammatically, this means we can move an arrow horizontally or vertically and it still represents the same local parameter in $T^*_\lambda$. Furthermore, if an arrow can be moved so that the head leaves the $\mb{N}^2$ grid, then it is identically zero in $T^*_\lambda$. This happens for every northwest pointing arrow. 

%Every northeast pointing arrow can be moved as far north and east as possible. Either the head exits the $\mb{N}^2$ lattice so that the local parameter vanishes, or the arrow has been moved until it is of the form $u_{i,j}$ for some $(i,j) \in \lambda$. By a similar argument, every southwest pointing arrow either corresponds to zero in $T^*_\lambda$ or it can be moved until it us $d_{i,j}$ for some $(i,j)$. Therefore these arrows form a basis for the cotangent space. There are two such arrows for each box and so $\dim T^*_\lambda = 2n$. 

%\end{proof} 

%\begin{rem} Fogarty originally gave a very different proof of smoothness for $\Hilb^m(X)$ for any smooth surface $X$ in \note{citation}. \end{rem}

\subsection{The cotangent space to $I_\lambda \in H^r_{a,b;n}$} 
\label{sec:cotangent}

We give a description of the weight space decomposition of the cotangent space to any monomial ideal $I_\lambda \in H^r_{a,b;n}$. This will be used later to compute the Bialynicki-Birula cells. 

By Proposition 2, $\Hilb^{rn}(\mb{A}^2)$ is smooth. It follows that the $G_{a,b;n}$-fixed locus is also smooth \cite[Proposition 4]{fogarty2}. In particular, the component $H^r_{a,b;n}$ is smooth. Moreover, since $G_{a,b;n}$ acts by scaling on $c^{l,s}_{i,j}$, then $c^{l,s}_{i,j}$ restricts to be nonzero on the fixed locus if and only if $G_{a,b;n}$ acts trivially on $c^{l,s}_{i,j}$. Thus the functions $c^{l,s}_{i,j}$ for $a(l - i) + b(s - j) \equiv 0 \pmod{n}$ generate the coordinate ring of $U_\lambda^{G_{a,b;n}}$. These correspond to the arrows that start and end on a box with the same color. We call these arrows \textit{invariant}. 

\begin{figure}[h]
\includegraphics[scale=0.4]{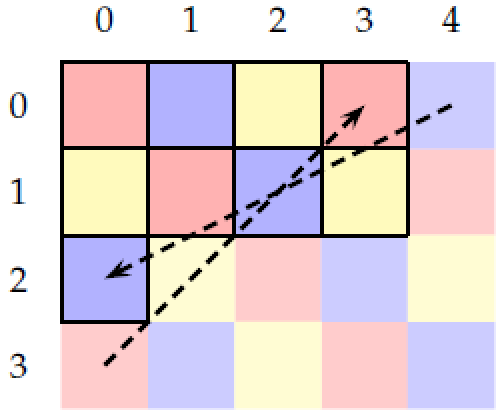}
\caption{\small Invariant arrows on $(4,3,2) \in B^3_{1,-1;3}$ corresponding to the box $(0,0)$.}
\end{figure}

\begin{prop}\label{prop:cotangent} Let $\lambda \in B^r_{a,b;n}$. The cotangent space to $I_\lambda \in H^r_{a,b;n}$ has basis given by the set of $d_{i,j}$ and $u_{i,j}$ that are invariant. \end{prop}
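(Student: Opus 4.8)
The plan is to leverage the smoothness of both $\Hilb^{rn}(\mb{A}^2)$ and its $G_{a,b;n}$-fixed locus, together with the fact that $G_{a,b;n}$ acts diagonally (by scaling) on Haiman's coordinate system. The starting point is Proposition \ref{prop:smooth}: the functions $\{d_{i,j}, u_{i,j}\}_{(i,j) \in \lambda}$ form a regular system of parameters for the local ring of $I_\lambda$ in $U_\lambda \subset \Hilb^{rn}(\mb{A}^2)$. Because $G_{a,b;n}$ acts linearly on the ambient $U_\lambda$ and each $c^{l,s}_{i,j}$ (in particular each $d_{i,j}$ and $u_{i,j}$) is a $G_{a,b;n}$-eigenfunction with character $\zeta^{a(l-i)+b(s-j)}$, the action on the cotangent space $T^*_\lambda \Hilb^{rn}(\mb{A}^2)$ is diagonalized in the basis $\{d_{i,j}, u_{i,j}\}$. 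The key general fact I would invoke is that for a smooth scheme with an action of a linearly reductive (here finite) group $G$ fixing a point $p$, the fixed subscheme is smooth at $p$ and its cotangent space is canonically the $G$-invariant part of $T^*_p$: that is, $T^*_{p}(Z^G) \cong (T^*_p Z)^G$. This is exactly the content of \cite[Proposition 4]{fogarty2} cited just above the statement.

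First I would make precise that $U_\lambda^{G_{a,b;n}}$ is a smooth locally closed subscheme of $U_\lambda$ whose defining ideal (inside the maximal ideal $\mf{m}(I_\lambda)$) is generated by the non-invariant coordinate functions — those $c^{l,s}_{i,j}$ with $a(l-i)+b(s-j) \not\equiv 0 \pmod n$ — since these are precisely the functions on which $G_{a,b;n}$ acts by a nontrivial character, hence must vanish on the fixed locus. Second, I would pass to the cotangent space: the image of the invariant coordinate functions $c^{l,s}_{i,j}$ spans $T^*_\lambda U_\lambda^{G_{a,b;n}}$, and among these, the sliding relations \eqref{eq_5} and \eqref{eq_6} (which are $G_{a,b;n}$-equivariant because sliding an arrow horizontally by one unit changes its character by $\zeta^{\pm a}$ consistently, and in fact changing $(l,s)$ and $(i,j)$ in tandem does not change the residue $a(l-i)+b(s-j) \bmod n$ at all) show that every invariant arrow reduces modulo $\mf{m}(I_\lambda)^2$ either to zero or to some $d_{i,j}$ or $u_{i,j}$. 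Crucially, sliding preserves invariance: if $c^{l,s}_{i,j}$ is invariant then so is the arrow obtained by translating both endpoints, so the reduction stays within the set of invariant arrows. Hence the invariant $d_{i,j}$'s and $u_{i,j}$'s span the cotangent space. Third, for linear independence, I would note that these are distinct torus eigenfunctions with distinct bidegrees (as in Haiman's argument), and the subset of a basis of $T^*_\lambda \Hilb^{rn}(\mb{A}^2)$ consisting of the invariant ones is still linearly independent; combined with $T^*_\lambda H^r_{a,b;n} = (T^*_\lambda \Hilb^{rn}(\mb{A}^2))^{G_{a,b;n}}$ and the fact that $G_{a,b;n}$ permutes the basis $\{d_{i,j}, u_{i,j}\}$ by scalars (so the invariant part is spanned precisely by the invariant basis vectors), we conclude the invariant $d$'s and $u$'s form a basis.

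The main obstacle, I expect, is the bookkeeping in the reduction step: one must verify carefully that the sliding moves of \eqref{eq_5}–\eqref{eq_6} can be performed while staying in the class of invariant arrows, i.e. that when an invariant arrow $c^{l,s}_{i,j}$ is slid toward the diagram it cannot ``jump'' to a non-invariant configuration, and that the terminal arrow (either $0$, or $d_{i,j}$, or $u_{i,j}$) is itself invariant — which follows since the residue $a(l-i) + b(s-j) \bmod n$ is unchanged under simultaneous translation of head and tail, but the endpoints must be tracked to ensure the arrow does not leave $\mb{Z}_{\geq 0}^2$ prematurely in a way that breaks the correspondence. A secondary subtlety is to confirm that $(T^*_\lambda \Hilb^{rn}(\mb{A}^2))^{G_{a,b;n}}$ really does compute $T^*_\lambda H^r_{a,b;n}$ rather than the cotangent space of some larger fixed-locus component; but since $H^r_{a,b;n}$ is by definition a union of components of $(\Hilb^{rn}(\mb{A}^2))^{G_{a,b;n}}$ containing $I_\lambda$ and the fixed locus is smooth at $I_\lambda$, the component through $I_\lambda$ is open in the fixed locus near $I_\lambda$, so the cotangent spaces agree. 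Everything else is a direct combination of Proposition \ref{prop:smooth}, the eigenfunction computation, and \cite[Proposition 4]{fogarty2}.
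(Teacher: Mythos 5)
Your proposal is correct and follows essentially the same route as the paper: Haiman's local coordinates and Proposition \ref{prop:smooth}, the observation that the non-invariant eigenfunctions $c^{l,s}_{i,j}$ vanish on the fixed locus while the invariant ones survive, and smoothness of the fixed locus via \cite[Proposition 4]{fogarty2} to identify $T^*_\lambda H^r_{a,b;n}$ with the invariant part of $T^*_\lambda\Hilb^{rn}(\mb{A}^2)$. The extra care you take with the sliding relations preserving invariance and with $H^r_{a,b;n}$ being open in the fixed locus near $I_\lambda$ just makes explicit what the paper's terser proof leaves implicit.
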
 

\begin{proof} By the discussion above, these are the only local parameters of $\Hilb^{rn}(\mb{A}^2)$ that restrict to be nonzero in a neighborhood of $I_\lambda$ in $H^r_{a,b;n}$. On the other hand, $G_{a,b;n}$ acts trivially on the invariant arrows so they remain linearly independent in the cotangent space of the fixed locus.  \end{proof}

\begin{cor}\label{cor:2r} Let $\lambda$ be an $(a,b;n)$-balanced partition of $rn$. Then exactly $2r$ of the arrows of the form $d_{i,j}$ or $u_{i,j}$ are invariant. \end{cor}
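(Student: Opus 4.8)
The plan is to count invariant distinguished arrows directly. Recall that by Proposition~\ref{prop:cotangent} the cotangent space to $I_\lambda \in H^r_{a,b;n}$ has a basis consisting of those $d_{i,j}$ and $u_{i,j}$ which are invariant, i.e. for which the tail and head boxes receive the same color under $w$. Since $H^r_{a,b;n}$ is smooth of dimension $2r$ (Section~\ref{sec:cotangent}), the cotangent space at any $I_\lambda$ has dimension exactly $2r$, so the number of invariant arrows among the $\{d_{i,j}, u_{i,j}\}$ must equal $2r$. This gives the corollary immediately once the dimension count for $H^r_{a,b;n}$ is in hand.

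First I would recall why $\dim H^r_{a,b;n} = 2r$: this is exactly the statement that $H^r_{a,b;n}$ is a $2r$-dimensional quasiprojective scheme, cited in the introduction as \cite[Theorem 1.5]{quot}, together with smoothness established in Section~\ref{sec:cotangent}. Then the argument is simply: the dimension of $H^r_{a,b;n}$ at $I_\lambda$ equals the dimension of its Zariski cotangent space (by smoothness), which by Proposition~\ref{prop:cotangent} equals the number of invariant arrows among the $d_{i,j}, u_{i,j}$; hence this number is $2r$.

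Alternatively, and perhaps more in keeping with the combinatorial spirit of the paper, one can give a direct partition-theoretic count that does not invoke smoothness. Each box of $\lambda$ contributes exactly two distinguished arrows, $d_{i,j}$ and $u_{i,j}$, so there are $2rn$ such arrows in total. Group them by the color of the box at which the arrow originates (for $u_{i,j}$) or terminates (for $d_{i,j}$) — one should check that the natural involution pairing, or a direct tally over the $n$ color classes, distributes the invariant arrows evenly, with exactly $2r$ in each residue class, using that $\lambda$ is $(a,b;n)$-balanced and hence has exactly $r$ boxes of each color. Summing over colors would then give $2rn$ total, but the invariant ones are precisely those counted once per color — this needs to be set up carefully to avoid double counting between the $d$ and $u$ arrows. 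The cleanest route, and the one I would actually write, is the smoothness-plus-dimension argument above, since it is two lines; the direct combinatorial count is more delicate and its main obstacle is correctly matching up arrows whose tail and head lie in different color classes so that the invariant ones are isolated cleanly.

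In summary, the main step is to invoke that $H^r_{a,b;n}$ is smooth of dimension $2r$ and combine this with Proposition~\ref{prop:cotangent}; I anticipate no real obstacle, the only subtlety being to state clearly that smoothness forces the cotangent dimension to be the geometric dimension at every point.
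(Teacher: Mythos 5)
Your main argument is exactly the paper's proof: by Proposition~\ref{prop:cotangent} the invariant arrows $d_{i,j}, u_{i,j}$ form a basis of the cotangent space at $I_\lambda$, and since $H^r_{a,b;n}$ is smooth of dimension $2r$ this space has dimension $2r$, so there are exactly $2r$ invariant arrows. The sketched combinatorial alternative is not needed (and is left incomplete), but the route you say you would actually write coincides with the paper's and is correct.
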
 

\begin{proof} The number of such arrows is the dimension of the cotangent space to $I_\lambda \in H^r_{a,b;n}$ which is $2r$ since $H^r_{a,b;n}$ is smooth and $2r$ dimensional. \end{proof}

Let $T_\lambda^* H^r_{a,b;n}$ denote the cotangent space to the torus fixed point $I_\lambda \in H^r_{a,b;n}$. Denote by $V(a,b)$ for $(a,b) \in \mb{Z}^2$ the irreducible representation of $(\mb{C}^*)^2$ on which $(t_1,t_2)$ acts by $t_1^at_2^b$. 

\begin{cor}\label{cor:weightspace} The weight space decomposition of $T_\lambda^* H^r_{a,b;n}$ as a representation of $(\mb{C}^*)^2$ is given by

$$
\bigoplus_{d_{i,j} \text{ invariant }} V(l(j) - i,j-c(i) + 1) \oplus \bigoplus_{u_{i,j} \text{ invariant }} V(i - l(j) + 1,c(i) - j).
$$

%\noindent Consequently, the tangent space $T_\lambda H^r_{a,b;n}$ 

\end{cor}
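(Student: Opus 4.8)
The plan is to deduce this directly from Proposition~\ref{prop:cotangent} together with the explicit torus weights of the coordinate functions $c^{l,s}_{i,j}$ computed above. The statement is purely a matter of reading off weights once we know a basis of eigenvectors for the cotangent space.

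First I would note that $I_\lambda$, being a monomial ideal, is fixed by $(\mb{C}^*)^2$, and that the $(\mb{C}^*)^2$- and $G_{a,b;n}$-actions on $\Hilb^{rn}(\mb{A}^2)$ commute; hence $(\mb{C}^*)^2$ acts on the fixed locus $H^r_{a,b;n}$ fixing the point $I_\lambda$, so that $T_\lambda^* H^r_{a,b;n} = \mf{m}(I_\lambda)/\mf{m}(I_\lambda)^2$ is naturally a $(\mb{C}^*)^2$-representation, compatibly with the inclusion into $T_\lambda^*\Hilb^{rn}(\mb{A}^2)$.

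Next I would recall that each $c^{l,s}_{i,j}$ is a torus eigenfunction with $(t_1,t_2)\cdot c^{l,s}_{i,j} = t_1^{l-i}t_2^{s-j}c^{l,s}_{i,j}$, so its image in $\mf{m}(I_\lambda)/\mf{m}(I_\lambda)^2$ is again an eigenvector of weight $(l-i,\,s-j)$, or zero. By Proposition~\ref{prop:cotangent} the images of the invariant arrows $d_{i,j}$ and $u_{i,j}$ form a basis of nonzero eigenvectors of $T_\lambda^* H^r_{a,b;n}$, so the weight space decomposition is simply the direct sum of the corresponding eigenlines. It then remains to substitute the definitions in \eqref{def1}: for an invariant $d_{i,j} = c^{l(j),j}_{i,c(i)-1}$ the weight is $t_1^{l(j)-i}t_2^{j-(c(i)-1)}$, contributing a copy of $V(l(j)-i,\,j-c(i)+1)$; for an invariant $u_{i,j} = c^{i,c(i)}_{l(j)-1,j}$ the weight is $t_1^{i-(l(j)-1)}t_2^{c(i)-j}$, contributing $V(i-l(j)+1,\,c(i)-j)$. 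Summing over all invariant $d_{i,j}$ and $u_{i,j}$ --- of which there are $2r$ in total by Corollary~\ref{cor:2r}, consistent with $\dim T_\lambda^*H^r_{a,b;n} = 2r$ --- gives the claimed formula.

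The argument is essentially bookkeeping, and the only place to be careful is keeping straight the two roles of the indices of $c^{l,s}_{i,j}$ --- the ``source'' box $(l,s)$ where the arrow starts versus the ``target'' box $(i,j)\in\lambda$ where it ends --- and using the sign convention for the torus action fixed above, so that the exponents come out with the stated signs rather than the opposite. The substantive input, such as it is, is already in hand: it is the identification in Proposition~\ref{prop:cotangent} of the invariant $d_{i,j}$ and $u_{i,j}$ as a basis of the cotangent space.
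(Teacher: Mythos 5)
Your argument is correct and is exactly the paper's proof: apply Proposition~\ref{prop:cotangent} to get the invariant $d_{i,j}$, $u_{i,j}$ as an eigenbasis of the cotangent space, then read off the weights $t_1^{l-i}t_2^{s-j}$ of the $c^{l,s}_{i,j}$ using the definitions in \eqref{def1}. The extra bookkeeping you include (commuting actions, the $2r$ count) is fine but not needed beyond what the paper states.
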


\begin{proof} $(\mb{C}^*)^2$ acts on $c^{l,s}_{i,j}$ by $t_1^{l-i}t_2^{s - j}$. Then we get the result by Proposition $\ref{prop:cotangent}$ as well as the definition $\hyperref[def1]{(4)}$ of $d_{i,j}$ and $u_{i,j}$. \end{proof}

\begin{rem} In the literature, the weight space decomposition of the tangent space is often described in terms of the \textit{arm} and \textit{leg} of a box $(i,j) \in \lambda$. This description is equivalent because

$$
l(j) - i = \mathrm{arm}(i,j) + 1 \enspace \enspace \enspace c(i) - j = \mathrm{leg}(i,j) + 1.
$$

\end{rem}

\subsection{Connectedness of $H^r_{a,b;n}$}

We explain why $H^r_{a,b;n}$ is connected. The idea is that for any ideal $I \in H^r_{a,b;n}$, picking a monomial order $w$ and taking initial degeneration to a monomial ideal $I_0 := in_w I$ gives a rational curve in $H^r_{a,b;n}$ so that every ideal lies in the same connected component as a monomial ideal. Then one must show that all the monomial ideals are connected by chains of rational curves. This is done more generally in \cite{maclagan-smith} for multigraded Hilbert schemes. In this section we will deduce connectedness from the results of \cite{maclagan-smith}.

Let $R = \mb{C}[x,y] = \oplus_A R_a$ be the polynomial ring graded by some abelian group $A$. For any function $h : A \to \ZZ_{\geq 0}$, the \textit{multigraded Hilbert scheme} $\Hilb^h(R)$ is the subvariety of $\Hilb(\mb{A}^2)$ parametrizing homogeneous ideals $I \subset R$ such that

$$
\dim_\mb{C}(R/I)_a = h(a).
$$

\noindent That is, $\Hilb^h(R)$ is the moduli space of homogeneous ideals with Hilbert function $h$. 

The equivariant Hilbert scheme $H^r_{a,b;n}$ is a special case as follows. Let $G \subset GL_2$ be a finite abelian group and let $A$ be the dual group $\Hom(G,\mb{C}^*)$ of characters of $G$. Then the action of $G$ on $\mb{A}^2$ induces an $A$-grading on $R$ by

$$
R_a := \{p(x) \in R : g\cdot p(x) = a(g) p(x) \text{ for all } g \in G\}.
$$

\noindent It is easy to see that an ideal is homogeneous if and only if it is $G$-invariant. Furthermore, each $a \in A$ is the character of some irreducible representation of $G$ so the condition $R/I \cong \mb{C}[G]^r$ as representations of $G$ is equivalent to $\dim_{\mb{C}}(R/I)_a = r$ for each $a \in A$. Therefore

$$
\Hilb^h(R) = \Hilb^r[\mb{A}^2/G]
$$

\noindent where $R$ is $A$-graded by the action of $G$ and $h(a) = r$ for all $a$. 

Connectedness now follows from the following theorem of Maclagan and Smith:

\begin{thm}\label{thm:connectedness}(\cite[Theorem 3.15]{maclagan-smith}) $\Hilb^h(R)$ is rationally chain connected for any function $h : A \to \ZZ_{\geq 0}$ satisfying 

$$
\sum_{a \in A} h(a) < \infty.
$$

\end{thm}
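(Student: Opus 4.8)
The statement to prove is Theorem \ref{thm:connectedness}, quoted from Maclagan--Smith. Since this is cited verbatim as \cite[Theorem 3.15]{maclagan-smith}, the natural ``proof'' here is really an indication of why one expects it and how the argument in loc.\ cit.\ proceeds. The plan is to reduce rational chain connectedness to two separate facts: first, that every point of $\Hilb^h(R)$ can be joined by a rational curve to a monomial (torus-fixed) ideal, and second, that the finitely many monomial ideals in $\Hilb^h(R)$ are themselves linked by a chain of rational curves.

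For the first step, I would fix an ideal $I \in \Hilb^h(R)$ and choose a generic one-parameter subgroup $\sigma : \mb{C}^* \to T = (\mb{C}^*)^2$ of the torus acting on $\mb{A}^2$. Acting on $I$ by $\sigma(t)$ and taking the flat limit as $t \to 0$ produces the initial ideal $\mathrm{in}_\sigma(I)$, which is a monomial ideal; because the grading is by the abelian group $A$ and $T$ acts compatibly with this grading (the $A$-grading is a coarsening of the $T$-weight grading, since $A = \Hom(G,\mb{C}^*)$ and $G \subset T$), the whole orbit closure $\overline{\sigma(\mb{C}^*)\cdot I}$ lies inside $\Hilb^h(R)$. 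This orbit closure is the image of $\mb{A}^1$ (or $\mb{P}^1$) and hence is a rational curve through $I$ and through the monomial ideal $\mathrm{in}_\sigma(I)$. One must check that Hilbert function is preserved under the flat degeneration, which is standard, and that the limit is actually attained inside the projective-over-affine scheme $\Hilb^h(R)$; this is where properness of the Hilbert scheme is used.

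For the second step---connecting the torus-fixed points---the combinatorial heart is to show any two monomial ideals with the same multigraded Hilbert function $h$ are connected by a sequence of rational curves each of which is an orbit closure of a rank-one subtorus or a carefully chosen $\mb{A}^1$-family interpolating between two monomial ideals differing by an elementary move (a ``staircase'' exchange that swaps one generator for another of the same $A$-degree while keeping $h$ fixed). This is the part I expect to be the main obstacle: it requires producing explicit flat families realizing each elementary move and an inductive argument (e.g.\ on some term-order statistic, or on the symmetric-difference of the two staircases) showing the moves suffice to pass between any two $h$-admissible monomial ideals. In the original Maclagan--Smith proof this is handled via distraction/polarization techniques and the connectedness of the Gr\"obner fan; I would invoke their argument rather than reprove it, since for our application (Corollary \ref{cor:connectedness}) we only need the qualitative conclusion that $\Hilb^h(R) = H^r_{a,b;n}$ is connected, which follows immediately from rational chain connectedness.
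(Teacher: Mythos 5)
Your proposal matches the paper exactly: the paper does not reprove this result but cites it as \cite[Theorem 3.15]{maclagan-smith}, and its prefatory sketch is the same two-step outline you give (Gr\"obner/initial degeneration of any ideal to a monomial ideal along a rational curve staying in $\Hilb^h(R)$, then connecting the monomial ideals by chains of rational curves, deferred to Maclagan--Smith). Your only slightly off remark is attributing the existence of the limit to properness of $\Hilb^h(R)$ (which is only quasiprojective); the limit exists simply because the Gr\"obner degeneration is a flat family over $\mb{A}^1$ whose special fiber has the same multigraded Hilbert function, but this does not affect the argument.
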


\begin{cor}\label{cor:connectedness} $H^r_{a,b;n}$ is irreducible and the Hilbert-Chow morphism

$$
H^r_{a,b;n} \to \Sym^r(\mb{A}^2/G_{a,b;n})
$$

\noindent is a resolution of singularities. 

\end{cor}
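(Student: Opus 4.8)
The plan is to combine three facts already established: that $H^r_{a,b;n}$ is smooth (Section~\ref{sec:cotangent}), that it is an instance of a multigraded Hilbert scheme, and Theorem~\ref{thm:connectedness} of Maclagan and Smith. First I would record that $H^r_{a,b;n}=\Hilb^h(R)$, where $R=\mb{C}[x,y]$ carries the $A$-grading for $A=\Hom(G_{a,b;n},\mb{C}^*)$ and $h\equiv r$ is the constant Hilbert function, so that $\sum_{a\in A}h(a)=rn<\infty$. Theorem~\ref{thm:connectedness} then says $H^r_{a,b;n}$ is rationally chain connected, hence connected; since a connected smooth scheme is irreducible, this gives the first assertion.

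For the second assertion I would use irreducibility to identify $H^r_{a,b;n}=\Hilb^r[\mb{A}^2/G_{a,b;n}]$ with its unique component---in particular the one containing the locus of $r$ distinct free $G_{a,b;n}$-orbits---and then verify that the Hilbert--Chow morphism $\pi\colon H^r_{a,b;n}\to\Sym^r(\mb{A}^2/G_{a,b;n})$ is a resolution, as sketched in Section~\ref{sec:intro}. The source is smooth and $\pi$ is proper (part of the construction of the Hilbert--Chow morphism for equivariant Hilbert schemes). The target is normal, being the $r$-th symmetric product of the normal variety $\mb{A}^2/G_{a,b;n}$. Finally $\pi$ is birational: because $\gcd(a,b)=1$, the group $G_{a,b;n}$ acts freely away from the two coordinate axes, so the locus $U\subset\Sym^r(\mb{A}^2/G_{a,b;n})$ parametrizing $r$ distinct points each coming from a free orbit is nonempty, hence dense. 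Over $U$ the morphism $\pi$ has a section $s$ sending a configuration of $r$ distinct free-orbit images to the ideal of the reduced disjoint union of the corresponding orbits (which satisfies $\mb{C}[x,y]/I\cong\mb{C}[G_{a,b;n}]^r$ and so lies in $H^r_{a,b;n}$); as a section of the separated morphism $\pi$ it is a closed immersion, its image is all of $\pi^{-1}(U)$, and $\pi^{-1}(U)$ is reduced, so $s$ is an isomorphism onto $\pi^{-1}(U)$. Since $U$ and $\pi^{-1}(U)$ are dense (both ambient spaces being irreducible of dimension $2r$), $\pi$ is birational, and a proper birational morphism from a smooth variety to a normal variety is a resolution.

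The step I expect to require the most care is the verification that $\pi$ is a well-defined proper morphism onto $\Sym^r(\mb{A}^2/G_{a,b;n})$: one must check that a $G_{a,b;n}$-invariant length-$rn$ subscheme $Z=V(I)\subset\mb{A}^2$ produces a degree-$r$ cycle on $\mb{A}^2/G_{a,b;n}$, which uses the identity $(\mb{C}[x,y]/I)^{G_{a,b;n}}=\mb{C}[x,y]^{G_{a,b;n}}/I^{G_{a,b;n}}$ valid because taking $G_{a,b;n}$-invariants is exact in characteristic zero, and that the resulting map is proper, for instance by factoring it through the projective Hilbert--Chow morphism $\Hilb^{rn}(\mb{A}^2)\to\Sym^{rn}(\mb{A}^2)$ or by quoting the general construction for Hilbert schemes of stacks. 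Everything else---irreducibility from Theorem~\ref{thm:connectedness} together with smoothness, normality of the symmetric product, and density of the free-orbit locus---is routine.
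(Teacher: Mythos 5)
Your argument follows the paper's route exactly: identify $H^r_{a,b;n}$ with the multigraded Hilbert scheme $\Hilb^h(R)$ for the constant Hilbert function $h\equiv r$, apply Theorem~\ref{thm:connectedness} of Maclagan--Smith to get connectedness, and combine with smoothness to get irreducibility, after which $H^r_{a,b;n}$ is the distinguished component and the Hilbert--Chow morphism is a resolution. The paper leaves the resolution step to the citations in the introduction, whereas you spell out properness and birationality over the free-orbit locus; this extra detail is correct and consistent with the paper's intent.
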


%\begin{proof} Let $X$ be the closure in $H^r_{a,b;n}$ of the open locus parametrizing $r$ distinct free $G_{a,b;n}$ orbits. The open locus is irreducible since it is isomorphic to an open subset of $\Sym^r(\mb{A}^2/G_{a,b;n})$ so the closure is also irreducible. 

%Let $I \in H^r_{a,b;n}$ be any ideal. Consider the subtorus $T = \{(t^{-p},t^{-q})\} \subset (\mb{C}^*)^2$ with $p >> q>0$. Then the flat limit 

%$$
%\lim_{t \to 0} t \cdot I = \lim_{t \to 0} (\{f(t^{-p}x, t^{-q}y): f \in I\}) = I_0 \in H^r_{a,b;n}
%$$

%\noindent exists and is a monomial ideal by Grobner theory. In fact it is the initial ideal under a monomial order induced by the weights $(p,q)$ (see \note{section where this is explained}). This shows every ideal is in the same connected component as a monomial ideal. 

%It suffices to show that every monomial ideal is in the same connected component as $X$. To do this we will construct for each balanced $\lambda$ an explicit configuration of $r$ distinct free orbits of $G_{a,b;n}$ with a flat degeneration to $I_\lambda$. 

%\end{proof}

\section{The Bialynicki-Birula stratification} 
\label{section3}

In this section we will show how to reduce the problem of computing Betti numbers of $H^r_{a,b;n}$ to counting $(a,b;n)$-balanced partitions of $rn$ with the Betti statistic (see Definition \ref{def:betti}). The idea is to use the action of an algebraic torus $(\mb{C}^*)^2$ on $H^r_{a,b;n}$ and the theory of Bialynicki-Birula \cite{bialynicki} to stratify $H^r_{a,b;n}$ into affine cells. Then a local analysis of the torus action at fixed points yields the appropriate statistic giving the Betti numbers. 

These techniques are standard in the theory of Hilbert schemes of points (see for example \cite{es1, es2, lili, buryak-feigin}). 

\subsection{The Bialynicki-Birula Decomposition Theorem} Let $S = (\mb{C}^*)^m$ be an algebraic torus and $X$ a smooth quasiprojective variety on which $S$ acts. Suppose the fixed point locus $X^S = \{p_1,\ldots, p_l\}$ is finite. Then for a generic one-dimensional subtorus $T \subset S$, we have $X^T = X^S$. We further assume that

$$
\lim_{t \to 0, t\in T} t\cdot x
$$

\noindent exists for all $x \in X$.

In this case, define

$$
X_j := \{ x : \lim_{t \to 0} t\cdot x = p_j\}.
$$

\noindent Then the $X_j$ are locally closed and $X = \bigsqcup_j X_j$. 

The action of $T$ on $X$ induces an action of $T$ on the tangent space $T_{p_j} X$. Define $T^+_{p_j} X$ to be the subspace of vectors on which $T$ acts with positive weight and let $n_j$ be its dimension. 

\begin{thm}\label{thm:bb} (Bialynicki-Birula Decomposition Theorem \cite[Theorem 4.4]{bialynicki}\footnote{Bialynicki-Birula originally proved this theorem for $X$ projective. The version we use here for quasiprojective $X$ is obtained by taking a torus equivariant compactification. See for example \cite[Lemma B.2]{BBS}}) Let $T \subset S$ and $X$ as above. Then each locally closed stratum $X_j$ is isomorphic to an affine space $\mb{A}^{n_j}$ so that 

$$
X = \bigsqcup_{j = 1}^l \mb{A}^{n_j}.
$$

\noindent Furthermore, the $i^{th}$ compactly supported Betti number $b_i = \dim H^i_c(X,\mb{Q})$ is given by

$$
\#\{j : 2n_j = i\}.
$$

\end{thm}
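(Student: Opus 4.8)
After the standard reduction to the case of $X$ projective (replace $X$ by a $T$-equivariant compactification, as in the footnote), I would separate the two assertions: the cell decomposition, and the Betti number formula. The equality $X = \bigsqcup_j X_j$ is immediate, since by hypothesis $\lim_{t\to 0} t\cdot x$ exists for every $x$ and must land in $X^T = \{p_1,\dots,p_l\}$. To describe each $X_j$, first linearize the $T$-action in a formal neighborhood of $p_j$: as $X$ is smooth and $T$ is linearly reductive, a $T$-equivariant coordinate system identifies $\widehat{\mathcal O}_{X,p_j}$ with a power series ring on which $T$ acts through its representation on $T_{p_j}X$. Split $T_{p_j}X = T^+_{p_j}X \oplus T^0_{p_j}X \oplus T^-_{p_j}X$ by sign of the weights; isolatedness of $p_j$ in $X^T$ forces $T^0_{p_j}X = 0$. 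A direct flow computation then shows that near $p_j$ the stratum $X_j$ is exactly the vanishing locus of the $T^-$-coordinates, hence is locally isomorphic to $T^+_{p_j}X$, of dimension $n_j := \dim T^+_{p_j}X$.

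The crux is to promote this to the statement that $X_j$ is \emph{globally} an affine space $\mathbb{A}^{n_j}$; this is the step I expect to require real work. Following Bialynicki-Birula, one checks that $x \mapsto \lim_{t\to 0} t\cdot x$ is a morphism $\psi_j : X_j \to \{p_j\}$ and that $\psi_j$ is a $T$-equivariant affine-space bundle — using the contracting $\mathbb C^*$-action together with a local-triviality argument glued from the formal linearizations above. Since the base is a point, $X_j \cong \mathbb{A}^{n_j}$. The same analysis gives \emph{filtrability}: choosing a $T$-equivariant projective embedding and ordering the fixed points $p_{\sigma(1)},\dots,p_{\sigma(l)}$ by the associated weights, the unions $Y_k := \bigcup_{i\le k} X_{\sigma(i)}$ form a chain of closed subvarieties $\emptyset = Y_0 \subset \dots \subset Y_l = X$ with $Y_k \setminus Y_{k-1} = X_{\sigma(k)}$.

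Granting this, the Betti number formula is a formal induction on $k$ using the long exact sequence in compactly supported cohomology attached to the closed subvariety $Y_{k-1} \subset Y_k$ with open complement $X_{\sigma(k)}$:
$$
\cdots \to H^i_c(X_{\sigma(k)}) \to H^i_c(Y_k) \to H^i_c(Y_{k-1}) \to H^{i+1}_c(X_{\sigma(k)}) \to \cdots.
$$
Because $H^\bullet_c(\mathbb{A}^{n}) \cong \mathbb Q$ is concentrated in degree $2n$, and $H^\bullet_c(Y_{k-1})$ is concentrated in even degrees by the inductive hypothesis, every connecting map $H^i_c(Y_{k-1}) \to H^{i+1}_c(X_{\sigma(k)})$ joins a group in even degree to one in odd degree and so vanishes. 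Hence the sequence splits into short exact sequences, $H^\bullet_c(Y_k)$ is again even, and $\dim H^i_c(Y_k) = \dim H^i_c(Y_{k-1}) + \dim H^i_c(X_{\sigma(k)})$; taking $k=l$ gives $b_i = \#\{j : 2n_j = i\}$. Beyond this bookkeeping, the only genuine difficulties are the global affine-bundle claim for the cells and — in the non-complete setting — verifying that the $Y_k$ are honestly closed; both are where smoothness of $X$ and properness of the compactification are used.
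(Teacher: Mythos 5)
First, a point of comparison: the paper does not prove this theorem at all --- it is quoted, with the projective case attributed to Bialynicki-Birula \cite[Theorem 4.4]{bialynicki} and the quasiprojective extension to \cite[Lemma B.2]{BBS} --- so what you have written is an attempted reproof of a cited result. Your overall architecture is the standard one from the literature, and the cohomological bookkeeping at the end is correct: given a filtration by closed subvarieties $Y_{k-1}\subset Y_k$ with $Y_k\setminus Y_{k-1}$ an affine cell, the long exact sequence in compactly supported cohomology plus parity does force degeneration and gives $b_i=\#\{j:2n_j=i\}$.

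However, the two steps you yourself flag as ``the real work'' are not actually supplied, and they are precisely the content of the cited results. (a) The global statement that each stratum $X_j$ is isomorphic to $\mb{A}^{n_j}$ does not follow from ``a local-triviality argument glued from the formal linearizations'': formal or \'etale linearization at $p_j$ only identifies $X_j$ with $T^+_{p_j}X$ near $p_j$, and a priori the cell could fail to be globally a trivial (or even locally trivial) affine fibration over the fixed point; establishing this is Bialynicki-Birula's theorem (or, in modern form, the attractor/functor-of-points arguments of Drinfeld and Jelisiejew--Sienkiewicz), so invoking ``following Bialynicki-Birula'' at this point makes the proof circular rather than independent. (b) The ``standard reduction to the projective case'' is where \cite[Lemma B.2]{BBS} earns its keep and needs an argument: one must choose a \emph{smooth} $T$-equivariant compactification $\bar X$ (Sumihiro equivariant completion followed by equivariant resolution), and then use that the boundary $\bar X\setminus X$ is closed and $T$-invariant together with the hypothesis that all limits of points of $X$ exist \emph{in} $X$ to conclude that every BB cell of $\bar X$ attached to a fixed point of $X$ lies entirely in $X$, and that no point of $X$ lies in a cell attached to a boundary fixed point; without this, $X_j=\bar X_j\cap X$ would only be known to be an open subset of an affine space, and the theorem would fail as stated. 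Relatedly, the closedness of your $Y_k$ inside the quasiprojective $X$ should be deduced from filtrability of the BB decomposition of $\bar X$ (a separate theorem of Bialynicki-Birula for the projective case) restricted to $X$ via the same boundary argument, not just asserted. With (a) and (b) filled in --- or simply cited, as the paper does --- your argument is fine.
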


\subsection{The stratification of $H^r_{a,b;n}$} 

We will apply the above results to the action of $S = (\mb{C}^*)^2$ on $H^r_{a,b;n}$. As we saw (Lemma \ref{lemma:balanced}) the fixed points are indexed by balanced partitions $\lambda$. We pick $T = (t^{-p},t^{-q}) \subset S$ for generic $p \gg q > 0$ so that $(H^r_{a,b;n})^T$ consists of only the monomial ideals.  

\begin{lemma}\label{lemma:limits} For all $I \in H^r_{a,b;n}$, the limit 

$$
\lim_{t \to 0, t\in T} t\cdot I = I_0
$$

\noindent exists in $H^r_{a,b;n}$.

\end{lemma}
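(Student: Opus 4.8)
The plan is to show that the one-parameter subgroup $T = (t^{-p}, t^{-q})$ with $p \gg q > 0$ contracts every point of $H^r_{a,b;n}$ to a monomial ideal as $t \to 0$. Since $H^r_{a,b;n}$ is a closed subvariety of $\Hilb^{rn}(\mb{A}^2)$ that is invariant under the $T$-action (the $T$-action commutes with the $G_{a,b;n}$-action, and $H^r_{a,b;n}$ is a union of components of the $G_{a,b;n}$-fixed locus), it suffices to prove that the limit exists inside $\Hilb^{rn}(\mb{A}^2)$ and then observe that the limit automatically lies in the closed $T$-invariant subscheme $H^r_{a,b;n}$. So the real content is the standard fact that the torus action on $\Hilb^{m}(\mb{A}^2)$ is \emph{semiprojective}: every point flows to a fixed point under a generic antidominant one-parameter subgroup.

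First I would recall the construction of the limit explicitly. Given an ideal $I \subset \mb{C}[x,y]$ with $\dim_\mb{C} \mb{C}[x,y]/I = rn$, choose the monomial order $w$ on $\mb{C}[x,y]$ in which $x^ay^b \succ x^cy^d$ iff $pa + qb > pc + qd$ (breaking ties, say, by total degree or lexicographically); for $p \gg q > 0$ generic this is a genuine term order. Then $\mathrm{in}_w(I)$ is a monomial ideal with the same Hilbert function, hence defines a point $I_0 = I_\lambda \in \Hilb^{rn}(\mb{A}^2)$. The classical fact (Bayer's thesis, or see the Gröbner-degeneration picture) is that the path $t \mapsto t \cdot I$ for $t \in T$ extends to $t = 0$ with value precisely $\mathrm{in}_w(I)$: one produces a flat family over $\mb{A}^1$ whose fiber over $t \neq 0$ is $t\cdot I$ and whose fiber over $0$ is $\mathrm{in}_w(I)$, using a homogenization/Rees-algebra argument with respect to the weight $w$. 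By the valuative criterion of properness applied to the projective morphism $\Hilb^{rn}(\mb{A}^2) \to \Hilb^{rn}(\mb{P}^2)$ (or simply by noting $\Hilb^{rn}(\mb{A}^2)$ is itself quasiprojective and the family is bounded), this flat family is the required extension, so $\lim_{t\to 0} t\cdot I$ exists and equals $I_0$.

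Concretely, in Haiman's coordinates this can be seen even more directly, which is perhaps the cleanest route to write: each $I$ lies in some affine chart $U_\mu$ of $\Hilb^{rn}(\mb{A}^2)$ with coordinates $c^{l,s}_{i,j}$ on which $T$ acts by $t \cdot c^{l,s}_{i,j} = t^{-p(l-i) - q(s-j)} c^{l,s}_{i,j}$. For the limit to exist we need the charts chosen compatibly so that the relevant exponents $-p(l-i)-q(s-j)$ are all $\geq 0$ at the point in question; since the set of weights appearing is finite and the term order $w$ is chosen generically, one shows that $I$ in fact lies in the chart $U_\lambda$ where $I_\lambda = \mathrm{in}_w(I)$, and in $U_\lambda$ every coordinate $c^{l,s}_{i,j}$ with $(l,s) \notin \lambda$, $(i,j)\in\lambda$ satisfies $p(l-i)+q(s-j) > 0$ by definition of $w$ (the monomial $x^ly^s$ is not in the standard basis but $x^iy^j$ is, so $x^ly^s \succ x^iy^j$). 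Hence all coordinates have nonnegative $T$-weight and $\lim_{t\to 0} t\cdot I$ is the origin of $U_\lambda$, namely $I_\lambda$.

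The main obstacle is the bookkeeping in the previous step: verifying that an arbitrary $I$ really does lie in the chart $U_\lambda$ for $\lambda$ its initial ideal, and that all the chart coordinates there have strictly (or weakly) positive weight. This is exactly the "Gröbner basis gives a standard monomial basis" statement together with the observation that initial terms strictly dominate the remaining terms in the chosen order — routine but needs the genericity of $p \gg q$ to ensure $w$ is a term order with no ties among the monomials that can occur (the partitions of $rn$ involve only finitely many boxes, so finitely many inequalities to separate). Once that is in place, the limit lies in $\Hilb^{rn}(\mb{A}^2)$, and since $I \in H^r_{a,b;n}$ and $H^r_{a,b;n}$ is $T$-invariant and closed, $I_0 = \lim_{t\to 0} t\cdot I \in H^r_{a,b;n}$ as well, completing the proof.
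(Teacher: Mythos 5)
Your main argument is correct and is essentially the paper's proof: the limit is the initial ideal $\mathrm{in}_w(I)$ for the weight order determined by $(p,q)$, its existence follows from the flatness of the Gr\"obner degeneration together with the bound on Gr\"obner basis degrees (which is what makes the generic choice $p \gg q > 0$ a tie-free order on the finitely many monomials that can occur), and the limit lies in $H^r_{a,b;n}$ because that locus is closed and $T$-invariant in $\Hilb^{rn}(\mb{A}^2)$ (the paper phrases this as the initial ideal being a flat limit, hence a point of $H^r_{a,b;n}$ corresponding to a balanced partition).

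One caveat: the justification in your ``cleanest route'' through Haiman's chart is false as stated. It is not true that $(l,s)\notin\lambda$ and $(i,j)\in\lambda$ forces $x^ly^s \succ x^iy^j$: with $p \gg q$ and $\lambda$ the single row $\{1,x,x^2\}$ (monomial ideal $(y,x^3)$), the non-standard monomial $y$ has weight $q$ while the standard monomial $x^2$ has weight $2p > q$, so for the coordinate $c^{0,1}_{2,0}$ on $U_\lambda$ the exponent $p(l-i)+q(s-j) = -2p+q$ is negative. What is true, and what the argument needs, is a statement about the coordinates that are \emph{nonzero at the point} $I$: if $\mathrm{in}_w(I) = I_\lambda$, then the normal form of $x^ly^s$ modulo a Gr\"obner basis of $I$ involves only monomials strictly smaller than $x^ly^s$ in the order, so $c^{l,s}_{i,j}(I) \neq 0$ forces $p(l-i)+q(s-j) > 0$ (in the example, $c^{0,1}_{2,0}(I) = 0$ automatically, since otherwise $x^2$ would be a leading term of an element of $I$). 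This is the precise content of your later remark that ``initial terms strictly dominate the remaining terms,'' and with that correction the chart computation goes through; alternatively you can simply drop that variant, since your flat-degeneration argument --- which is the paper's --- already suffices.
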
 

\begin{proof} Consider the monomial partial order given by weight $(p,q)$. That is, $x^ly^s > x^iy^j$ if and only if $lp + sq > ip + jq$. Let $f \in I$ be any polynomial with leading term $x^ly^s$ under this monomial partial order. Then for $t \in T$,

$$
t\cdot f = t^{-(pl + qs)} x^ly^s + \sum_{pi + qj < pl + qs} t^{-(pi + qj)}x^iy^j \in t\cdot I.
$$

Multiplying through by $t^{pl + qs}$ gives $x^ly^s + t(\text{lower order terms}) \in t\cdot I$. So in the limit as $t \to 0$, we get $x^ly^s$. The dimension $\dim_\CC \CC[x,y]/I = rn$ is fixed so the degree of the polynomials in a Gr\"obner basis of $I$ is bounded \cite[Theorem 8.2]{grobner}. Since we are taking $p \gg q > 0$ all polynomials of bounded degree have a unique leading term under this monomial partial order so the limit ideal is the initial monomial ideal generated by these leading terms. Taking initial ideal is a flat limit so $I_0 \in H^r_{a,b;n}$ is a monomial ideal corresponding to some balanced partition. 

\end{proof}

Applying Theorem \ref{thm:bb} gives a decomposition of $H^r_{a,b;n}$ indexed by balanced partitions $\lambda \in B^r_{a,b;n}$:

$$
H^r_{a,b;n} = \bigsqcup_{\lambda \in B^r_{a,b;n}} \mb{A}^{n(\lambda)}
$$

\noindent where $n(\lambda)$ is the dimension of the positive weight subspace $T_\lambda^+H^r_{a,b;n} \subset T_\lambda H^r_{a,b;n}$ of the tangent space at $I_\lambda$. 

\begin{defn}\label{def:betti} Define the \textit{Betti statistic} function $\beta : B^r_{a,b;n} \to \ZZ_{\geq 0}$ as follows:

$$
\beta(\lambda) = \#\{d_{i,j} \textit{ invariant }\} + \#\{u_{i,j} \text{ invariant and vertical}\}.
$$

\noindent That is, $\beta(\lambda)$ is the number of invariant arrows on $\lambda$ that are pointing either strictly north or weakly southwest. 

\end{defn} 

\begin{rem} Note from the definition \hyperref[def1]{(4)} of $u_{i,j}$, it is vertical if and only if $i = l(j) - 1$. \end{rem} 

\begin{figure}[h]
\includegraphics[scale=0.4]{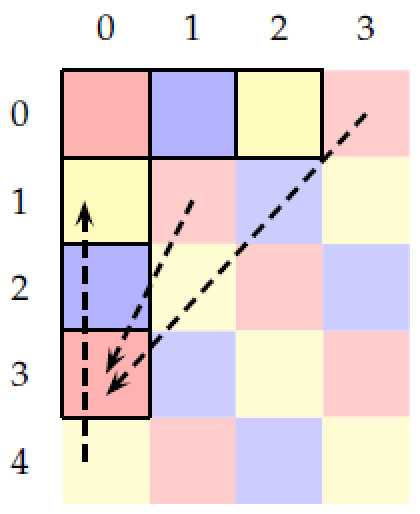}
\caption{\small This diagram has Betti statistic three.}
\end{figure}

\begin{prop}\label{prop:postiveweight} For any $\lambda \in B^r_{a,b;n}$, we have $\beta(\lambda) = \dim T_\lambda^+H^r_{a,b;n}$. \end{prop}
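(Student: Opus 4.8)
The plan is to compute the weights of the one-parameter torus $T$ on the tangent space $T_\lambda H^r_{a,b;n}$ directly from the weight-space decomposition of the cotangent space recorded in Corollary~\ref{cor:weightspace}, and simply to read off which of them are positive. Recall from Section~\ref{section3} that $T = (t^{-p}, t^{-q})$ with $p \gg q > 0$, and that by Theorem~\ref{thm:bb} the number $\dim T_\lambda^+ H^r_{a,b;n}$ is the number of basis eigenvectors of $T_\lambda H^r_{a,b;n}$ on which $T$ acts with strictly positive weight.

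First I would dualize Corollary~\ref{cor:weightspace}: the tangent space $T_\lambda H^r_{a,b;n}$ has a $(\mb{C}^*)^2$-eigenbasis consisting of one vector $\partial_{d_{i,j}}$ dual to each invariant $d_{i,j}$, of $(\mb{C}^*)^2$-weight $(i - l(j),\, c(i) - 1 - j)$, and one vector $\partial_{u_{i,j}}$ dual to each invariant $u_{i,j}$, of weight $(l(j) - 1 - i,\, j - c(i))$. Since $T = (t^{-p}, t^{-q})$ scales an eigenvector of $(\mb{C}^*)^2$-weight $(\alpha, \beta)$ by $t^{-p\alpha - q\beta}$, such a vector lies in $T_\lambda^+ H^r_{a,b;n}$ if and only if $p\alpha + q\beta < 0$. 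Note that for an invariant $d_{i,j}$ or $u_{i,j}$ the two coordinates of its weight are never both zero, and that all row and column lengths of $\lambda$ are at most $rn$ since $\lambda$ is a partition of $rn$; taking $p > q\cdot rn$ — which is part of what $p \gg q > 0$ means here — therefore makes $T$ generic (so that $(H^r_{a,b;n})^T = (H^r_{a,b;n})^S$, as used in Lemma~\ref{lemma:limits}) and forces each of the inequalities below to be strict.

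It then remains to check signs box by box. For an invariant $d_{i,j}$: since $(i,j) \in \lambda$ we have $0 \le i \le l(j) - 1$ and $0 \le j \le c(i) - 1$, so in the weight $(\alpha, \beta) = (i - l(j),\, c(i) - 1 - j)$ we have $\alpha \le -1$ and $0 \le \beta \le rn$, whence $p\alpha + q\beta \le -p + q\cdot rn < 0$; thus $\partial_{d_{i,j}}$ always contributes to $T_\lambda^+ H^r_{a,b;n}$. For an invariant $u_{i,j}$ the weight $(\alpha, \beta) = (l(j) - 1 - i,\, j - c(i))$ has $\alpha \ge 0$ and $\beta \le -1$: if $u_{i,j}$ is vertical, i.e. $i = l(j) - 1$, then $\alpha = 0$ and $p\alpha + q\beta = q(j - c(i)) < 0$, so $\partial_{u_{i,j}}$ contributes; if $u_{i,j}$ is not vertical then $\alpha \ge 1$ and $p\alpha + q\beta \ge p - q\cdot rn > 0$, so $\partial_{u_{i,j}}$ does not contribute. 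Summing the contributions gives
$$
\dim T_\lambda^+ H^r_{a,b;n} = \#\{d_{i,j}\ \text{invariant}\} + \#\{u_{i,j}\ \text{invariant and vertical}\} = \beta(\lambda)
$$
by Definition~\ref{def:betti}. There is no substantive obstacle here; the only thing requiring care is the bookkeeping of signs — cotangent versus tangent, together with the minus signs in $T = (t^{-p}, t^{-q})$ — and making the phrase "$p \gg q$" quantitative, namely that it must dominate all the (bounded) row and column lengths of $\lambda$.
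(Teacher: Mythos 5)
Your proof is correct and takes essentially the same route as the paper's: dualize the weight decomposition of Corollary \ref{cor:weightspace}, restrict to the subtorus $T=(t^{-p},t^{-q})$ with $p\gg q>0$, and check signs case by case, concluding that every invariant $d_{i,j}$ contributes to $T_\lambda^+H^r_{a,b;n}$ while an invariant $u_{i,j}$ contributes exactly when it is vertical ($i=l(j)-1$). Your quantitative reading of $p\gg q$ as $p>q\cdot rn$ (using that row and column lengths are bounded by $rn$) is a correct sharpening of the paper's informal statement, and your sign bookkeeping in the non-vertical $u_{i,j}$ case is the intended one.
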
 

\begin{proof} Corollary \ref{cor:weightspace} gives us the weight space decomposition of the cotangent space $T_\lambda^*H^r_{a,b;n}$. The tangent space $T_\lambda H^r_{a,b;n}$ is the dual space and so has weight space decomposition

$$
\bigoplus_{d_{i,j} \text{ invariant }} V(-(l(j) - i),-(j-c(i) + 1)) \oplus \bigoplus_{u_{i,j} \text{ invariant }} V(-(i - l(j) + 1),-(c(i) - j)).
$$

Considering the subtorus $T = (t^{-p},t^{-q})$, we see the weight spaces for this subtorus are generated by the invariant $d_{i,j}$ with weight $p(l(j) - i) + q(j - c(i) + 1)$ and invariant $u_{i,j}$ with weight $l(i - r(j) + 1) + q(c(i) - j)$. The $d_{i,j} = c^{r(j), j}_{i, c(i) - 1}$ arrows point southwest and so satisfy $l(j) > i$. Since $p \gg q > 0$, this means the weight $p(l(j) - i) + q(j - c(i) + 1) > 0$.

On the other hand, a $u_{i,j} = c^{i, c(i)}_{l(j) - 1, j}$ vector points northeast. If it points strictly northeast, then $c(i) > j$ but $l(j) - 1 < i$ and so the weight $p(i - l(j) + 1) + q(c(i) - j) < 0$. If it points strictly north, then $l(j) - 1 = i$ and $c(i) > j$ so that $p(i - l(j) + 1) + q(c(i) - j) = q(c(i) - j) > 0$. Therefore the positive weight vectors are exactly counted by the Betti statistic. 

\end{proof}

This proves Proposition \ref{prop:betti} which we repeat here for convenience:

\begin{prop1.1} The Betti numbers of $H^r_{a,b;n}$ are given by

$$
b_i(H^r_{a,b;n}) = \#\{ \lambda \in H^r_{a,b;n} : 2\beta(\lambda) = i\}.
$$

\noindent In particular, the Poincare polynomial $P_{H^r_{a,b;n}}(z)$ of $H^r_{a,b;n}$ satisfies 

$$
P_{H^r_{a,b;n}}(z) = \sum_{\lambda \in B^r_{a,b;n}} z^{2\beta(\lambda)}
$$

\noindent and the topological Euler characteristic is given by $\chi(H^r_{a,b;n}) = \# B^r_{a,b;n}$. 

\end{prop1.1}

This reduces Main Theorems \hyperref[mainthma]{A} and \hyperref[mainthmb]{B} to the combinatorial statements in Theorems \ref{thm:bijection} and \ref{thm:qpolynomial}. The proofs of these will be given in Section \ref{sec:proofs}.

\subsection{Grothendieck ring of varieties} In this section we will discuss the Grothendieck ring of varieties. Due to the Bialynicki-Birula decomposition, any statements about Betti numbers (for example Main Theorem \hyperref[mainthma]{A}) lift to the Grothendieck ring of varieties. 

Recall the \textit{Grothendieck ring of varieties} $K_0(\mc{V}_\mb{C})$ is the ring generated by isomorphism classes $[X]$ of varieties $X/\mb{C}$ under the \textit{cut-and-paste} relations: 

$$
[X] = [U] + [X \setminus U] \enspace \enspace \enspace  U \subset X \text{ open }. 
$$

\noindent The ring structure is given by $[X][Y] = [X \times Y]$ with unit $[pt] = 1$. We denote by $\mb{L} = [\mb{A}^1] \in K_0(\mc{V}_\mb{C})$. Then $[\mb{A}^n] = \mb{L}^n$.

If $X = \bigsqcup_i X_i$ where $X_i \subset X$ are a finite collection of locally closed subvarieties, then

$$
[X] = \sum_i [X_i].
$$

\noindent Thus the Bialynicki-Birula decomposition induces a decomposition of the class in $K_0(\mc{V}_\mb{C})$. We get the following: 

\begin{prop}\label{prop:motivic} The class of $H^r_{a,b;n}$ in $K_0(\mc{V}_\mb{C})$ is given by 

$$
[H^r_{a,b;n}] = \sum_{\lambda \in B^r_{a,b;n}} \mb{L}^{\beta(\lambda)}.
$$

\end{prop}

The ring $K_0(\mc{V}_\mb{C})$ is universal with respect to ring valued invariants of varieties satisfying cut-and-paste and splitting as a product for $X \times Y$. These include compactly supported Euler characteristic, virtual Poincare polynomials, and virtual mixed Hodge polynomials. These are often called motivic invariants.

Proposition \ref{prop:motivic} allows us to compute all motivic invariants of $H^r_{a,b;n}$ in terms of the Betti statistic on the set $B^r_{a,b;n}$ of balanced partitions. Then we apply Theorems \ref{thm:bijection} and \ref{thm:qpolynomial} proven below to obtain Theorem \ref{thm:motivic}. 

\section{Proofs of the theorems} 
\label{sec:proofs}

In Section \ref{section3}, we showed how the main theorems follow from Theorems \ref{thm:bijection} and \ref{thm:qpolynomial}. In this section we will give combinatorial proofs of these results after making an initial reduction. 

\subsection{The Chevalley-Shephard-Todd Theorem} 
\label{sec:CST}

Here we reduce to the case where both $a$ and $b$ are coprime to $n$ using the Chevalley-Shephard-Todd theorem. 

Let $G$ be a finite group acting linearly and faithfully on $\mb{A}^k$. We say that an element $\gamma \in G$ is a \textit{pseudoreflection} if it fixes a hyperplane in $\mb{A}^k$. We recall the following classical theorem:

\begin{thm}\label{thm:cst}(Chevalley-Shephard-Todd \cite[\S 5 Thm 4]{CST}) The following are equivalent:

\begin{enumerate}[(a)]

\item $G$ is generated by pseudoreflections, 

\item $\mb{A}^k/G$ is smooth, 

\item $\mb{A}^k/G \cong \mb{A}^k$,

\item the natural map $\mb{A}^k \to \mb{A}^k/G$ is flat. 

\end{enumerate} \end{thm}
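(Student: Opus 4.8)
The plan is to set $S=\mb{C}[x_1,\dots,x_k]$ with its standard grading and $R=S^G$, so that $\mb{A}^k/G=\Spec R$; I will use that $R$ is a finitely generated graded $\mb{C}$-algebra, a domain with $R_0=\mb{C}$, that $S$ is a finite $R$-module, and that $\dim R=k$. Since $(c)\Rightarrow(b)$ is immediate, it remains to prove $(b)\Rightarrow(c)$, $(c)\Rightarrow(d)$, $(d)\Rightarrow(b)$, and $(a)\Leftrightarrow(c)$. For $(b)\Rightarrow(c)$: if $\mb{A}^k/G$ is smooth then $R$ is regular, so the localization at the homogeneous maximal ideal $R_+$ is regular of dimension $k$; lifting a $\mb{C}$-basis of $R_+/R_+^2$ to homogeneous $f_1,\dots,f_k\in R_+$ and applying graded Nakayama shows these generate $R$ as a $\mb{C}$-algebra, and a surjection $\mb{C}[y_1,\dots,y_k]\twoheadrightarrow R$ between domains of equal dimension is an isomorphism, so $\mb{A}^k/G\cong\mb{A}^k$. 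For $(c)\Rightarrow(d)$: if $R$ is a polynomial ring then $S$ is a maximal Cohen--Macaulay $R$-module, hence free, so $\mb{A}^k\to\mb{A}^k/G$ is flat; conversely, flatness makes each $R_{\fp}\to S_{\fq}$ a flat local homomorphism into the regular ring $S_{\fq}$, which forces $R_{\fp}$ regular, so $R$ is regular and $(d)\Rightarrow(b)$.

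For $(a)\Rightarrow(c)$ I would follow Chevalley. Let $f_1,\dots,f_m$ be a minimal homogeneous generating set of the Hilbert ideal $(R_+)S$; these are invariants and, by Hilbert, generate $R$ as a $\mb{C}$-algebra, so it suffices to prove that they are algebraically independent, which then forces $m=k$ and $R=\mb{C}[f_1,\dots,f_k]$. Supposing instead that there is a homogeneous relation $P(f_1,\dots,f_m)=0$ of minimal degree, I would differentiate it in each $x_s$, combine with Euler's identity $\sum_s x_s\,\partial f/\partial x_s=(\deg f)f$, and feed the resulting syzygies among the invariants into Chevalley's structural lemma --- that when $G$ is generated by pseudoreflections a syzygy $\sum a_i g_i=0$ of homogeneous invariants with $g_1\notin(g_2,\dots,g_m)R$ forces $a_1\in(R_+)S$ --- to conclude by graded Nakayama that $f_1\in(f_2,\dots,f_m)S$, contradicting minimality. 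For $(c)\Rightarrow(a)$ I would follow Serre. Let $H\le G$ be the subgroup generated by all pseudoreflections of $G$; by the previous step $S^H=\mb{C}[g_1,\dots,g_k]$ is a polynomial ring. Comparing the Hilbert series of $R$ and of $S^H$ with Molien's formula and letting $t\to 1$ gives $\prod_i\deg f_i=|G|$ and $\prod_i\deg g_i=|H|$, while reading off the next term --- equivalently, the vanishing order of the Jacobian $\det(\partial f_i/\partial x_j)$ along each pseudoreflection hyperplane --- gives $\sum_i(\deg f_i-1)=\#\{\text{pseudoreflections of }G\}=\#\{\text{pseudoreflections of }H\}=\sum_i(\deg g_i-1)$. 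Writing $f_i=F_i(g_1,\dots,g_k)$ and factoring $\det(\partial f_i/\partial x_j)=\det(\partial F_i/\partial g_j)\cdot\det(\partial g_i/\partial x_j)$, the degree bookkeeping forces $\det(\partial F_i/\partial g_j)$ to be a nonzero constant, so the finite map $\Spec\mb{C}[g_1,\dots,g_k]\to\Spec\mb{C}[f_1,\dots,f_k]$ is étale, hence --- $\mb{A}^k$ being simply connected and the source connected --- an isomorphism. Thus $S^G=S^H$, so $\fr(S)^G=\fr(S)^H$ inside the Galois extension $\fr(S)/\fr(S)^G$, whence $G=H$ by the Galois correspondence, i.e.\ $G$ is generated by pseudoreflections.

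The step I expect to be the main obstacle is Chevalley's syzygy lemma used in $(a)\Rightarrow(c)$: it is the only place where the hypothesis ``generated by pseudoreflections'' enters in an essential way, and its proof is a delicate induction on degree exploiting that $\gamma\cdot h-h$ is divisible by the linear form cutting out the fixed hyperplane whenever $\gamma$ is a pseudoreflection. A secondary technical point is the pair of numerical identities $\prod_i\deg f_i=|G|$ and $\sum_i(\deg f_i-1)=\#\{\text{pseudoreflections}\}$ needed for $(c)\Rightarrow(a)$, which rest on the Molien series and Jacobian analysis. For $k=2$ --- the only case needed in this paper --- both implications admit an elementary treatment via the explicit description of the finite subgroups of $GL_2$ and of which of their elements are pseudoreflections.
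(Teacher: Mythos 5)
The paper offers no proof of this statement at all: it is quoted as a classical result with the citation [CST, \S 5, Thm 4], and is only used as a black box in Corollary~\ref{cor:cst} and Proposition~\ref{prop:cst}. So there is nothing internal to compare against; what you have written is essentially the standard Chevalley--Serre proof contained in the cited reference, and as an outline it is sound. The commutative-algebra equivalences are handled correctly: (c)$\Rightarrow$(b) is trivial, (b)$\Rightarrow$(c) by graded Nakayama plus the fact that a surjection of $k$-dimensional domains is injective, (c)$\Rightarrow$(d) because a maximal Cohen--Macaulay module over a regular ring is free (Auslander--Buchsbaum), and (d)$\Rightarrow$(b) by descent of regularity along the flat local maps $R_{\fp}\to S_{\fq}$, using that $\Spec S\to\Spec R$ is surjective since $S$ is integral over $R$. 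For (a)$\Leftrightarrow$(c) you correctly identify the two genuinely nontrivial ingredients: Chevalley's syzygy lemma (whose proof by induction on degree, using divisibility of $\gamma\cdot h-h$ by the reflecting linear form, you describe accurately) for the forward direction, and for the converse the numerical identities $\prod_i d_i=|G|$ and $\sum_i(d_i-1)=\#\{\text{pseudoreflections}\}$, which indeed need only that the invariant ring is polynomial, so they apply to both $G$ and $H$; your \'etale argument then closes the loop because $S^H$ is a finite $S^G$-module and $\mb{A}^k_{\mb{C}}$ admits no nontrivial connected finite \'etale covers, after which the Galois correspondence on fraction fields (using $\operatorname{Frac}(S^G)=\operatorname{Frac}(S)^G$ for finite $G$) gives $G=H$. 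The only caveat is that the two steps you flag as delicate are sketched rather than proved, but they are exactly the classical lemmas in the cited source, and in the characteristic-zero setting (indeed $k=2$) that the paper needs, nothing further is required.
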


Let $Y_{r,G} \subset \Hilb^r[\mb{A}^k/G]$ denote the irreducible component containing the locus of $r$ distinct free $G$-orbits in $\mb{A}^k$. 

\begin{cor}\label{cor:cst} The restriction $h_1: Y_{1,G} \to \mb{A}^k/G$ of the Hilbert-Chow morphism to $Y_{1,G}$ is an isomorphism if and only if any of the equivalent conditions of the Chevalley-Shephard-Todd theorem hold. \end{cor}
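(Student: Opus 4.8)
The plan is to unpack both directions from the characterizations in Theorem \ref{thm:cst}, using the structure of the Hilbert–Chow morphism on the component $Y_{1,G}$. First I would observe that $Y_{1,G}$ is, by definition, the irreducible component of $\Hilb^1[\mb{A}^k/G]$ containing the locus parametrizing free orbits, and that over the free locus $U \subset \mb{A}^k/G$ (the image of points with trivial stabilizer) the Hilbert–Chow morphism $h_1$ is already an isomorphism: a free $G$-orbit in $\mb{A}^k$ is the same data as a point of $\mb{A}^k/G$ lying in $U$, and the corresponding ideal is determined by its support. So $h_1$ is always a proper birational morphism onto $\mb{A}^k/G$ which restricts to an isomorphism over the open dense $U$. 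The content of the corollary is then exactly: $h_1$ is a \emph{global} isomorphism if and only if $\mb{A}^k/G$ is already smooth (equivalently, one of (a)--(d) holds).

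For the ``only if'' direction, suppose $h_1 \colon Y_{1,G} \to \mb{A}^k/G$ is an isomorphism. Since $Y_{1,G}$ is a component of a Hilbert scheme of a smooth orbifold, it is smooth in a neighborhood of the locus of free orbits; more to the point, for $r = 1$ one has $\Hilb^1[\mb{A}^2/G] \to \mb{A}^2/G$ equal to the minimal resolution in the surface case, and in general $Y_{1,G}$ maps to $\mb{A}^k/G$ resolving its singularities over the non-free locus. If this map is an isomorphism then $\mb{A}^k/G$ must itself be smooth, which is condition (b). (In the generality of arbitrary $k$ one can argue directly: $h_1$ being an isomorphism forces $\mb{A}^k/G$ to be the coarse space of a smooth Hilbert scheme component, hence smooth.) Conversely, if $\mb{A}^k/G$ is smooth then by Theorem \ref{thm:cst}(d) the quotient map $\pi \colon \mb{A}^k \to \mb{A}^k/G$ is flat, so every fiber $\pi^{-1}(y)$ is a length-$|G|$ subscheme, i.e.\ defines a point of $\Hilb^{|G|}(\mb{A}^k)^G = \Hilb^1[\mb{A}^k/G]$ whose Hilbert–Chow image is $y$. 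This gives a section $\mb{A}^k/G \to \Hilb^1[\mb{A}^k/G]$ of $h_1$; since over the free locus it lands in $Y_{1,G}$ and $\mb{A}^k/G$ is irreducible, the section lands in $Y_{1,G}$, and being a section of a proper birational morphism over a normal (smooth) base it must be an isomorphism onto $Y_{1,G}$. Hence $h_1$ is an isomorphism.

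The main obstacle I expect is the ``only if'' direction in full generality: one wants to conclude smoothness of $\mb{A}^k/G$ purely from $h_1$ being an isomorphism, and this really uses that $Y_{1,G}$ sits inside a Hilbert scheme that is well-behaved (smooth, or at least smooth along the relevant locus). For $k = 2$ this is clean because $\Hilb^1[\mb{A}^2/G]$ is literally the minimal resolution by \cite[Theorem 5.1]{kidoh}, \cite[Theorem 3.1]{ishii}, so an isomorphism to $\mb{A}^2/G$ forces the latter to be its own minimal resolution, hence smooth. For general $k$ one should instead invoke that $h_1$ being an isomorphism identifies $\mb{A}^k/G$ with a component of the $G$-fixed Hilbert scheme and then run the flatness criterion (d) backwards, or simply note that the statement is only needed for $k = 2$ in this paper and restrict attention there. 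The ``if'' direction is the routine one: it is the standard construction of the section of Hilbert–Chow from the flat quotient morphism, with the only care being to check the section lands in the correct irreducible component, which follows from density of the free locus.
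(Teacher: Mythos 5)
Your ``if'' direction is essentially the paper's argument: flatness of $\mb{A}^k \to \mb{A}^k/G$ (condition (d)) makes the quotient map a flat family of $G$-clusters, hence induces a section of $h_1$, and since the section inverts $h_1$ over the dense locus of free orbits it is an isomorphism onto $Y_{1,G}$. (One point worth making explicit: a fiber of this family defines a point of $\Hilb^1[\mb{A}^k/G]$, not merely of the fixed locus $\Hilb^{|G|}(\mb{A}^k)^G$, because the $G$-representation type of the fibers is locally constant in a flat family and equals the regular representation on free orbits; these two spaces are not equal in general.)

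The ``only if'' direction as you wrote it has a genuine gap. Your main argument deduces condition (b) from the claim that $Y_{1,G}$ is smooth, or that $h_1$ is a resolution of singularities. That input is available only for $k=2$, where $\Hilb^1[\mb{A}^2/G] \to \mb{A}^2/G$ is the minimal resolution; for general $k$ the paper proves no smoothness statement for $Y_{1,G}$ (its smoothness argument via Fogarty is specific to surfaces), and smoothness of equivariant Hilbert schemes is simply not available, so ``$h_1$ an isomorphism $\Rightarrow$ $\mb{A}^k/G$ smooth'' is unsupported. Restricting to $k=2$ does not prove the corollary, which is stated for all $k$. The alternative you only gesture at --- ``run the flatness criterion (d) backwards'' --- is exactly what the paper does, and it has to be carried out: take the universal family $\mc{U}_1 \to Y_{1,G}$ together with its $G$-equivariant map $\mc{U}_1 \to \mb{A}^k$; since $G$ acts fiberwise with $\mc{U}_1/G = Y_{1,G}$, the hypothesis that $h_1$ is an isomorphism identifies $\mc{U}_1/G$ with $\mb{A}^k/G$, whence $\mc{U}_1 \to \mb{A}^k$ is an isomorphism; then $\mb{A}^k \cong \mc{U}_1 \to Y_{1,G} \cong \mb{A}^k/G$ is flat because the universal family is flat, which is condition (d). Without this step (or some substitute valid for all $k$ and making no smoothness assumption on $Y_{1,G}$), your proof covers only the surface case.
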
 

\begin{proof} Suppose the conditions of the theorem hold so that $\mb{A}^k \to \mb{A}^k/G$ is flat. Then this is a flat family of $G$-orbits in $\mb{A}^k$ and so induces a map 

$$
\mb{A}^k/G \to \Hilb^1[\mb{A}^k/G]
$$

\noindent which is a section to $h$. This is an isomorphism on a dense open subset of $Y_{1,G}$ with inverse given by $h_1$ and so is an isomorphism everywhere. 

For the converse suppose $h_1 : Y_{1,G} \to \mb{A}^k/G$ is an isomorphism. We have a commutative diagram 

$$
\xymatrix{\mc{U}_1 \ar[r] \ar[d] & \mb{A}^k \ar[d] \\ Y_{1,G} \ar[r]^h & \mb{A}^k/G}
$$

\noindent where $\mc{U}_1$ is the universal family over $Y_{1,G}$ and $\mc{U}_1 \to \mb{A}^k$ is $G$-equivariant. The group $G$ acts fiberwise on $\mc{U}_1$ such that $\mc{U}_1/G = Y_{1,G}$ so the $G$-equivariant map $\mc{U}_1 \to \mb{A}^k$ over $\mb{A}^k/G$ induces an isomorphism $\mc{U}_1/G \cong \mb{A}^k/G$. It follows that $\mc{U}_1 \to \mb{A}^k$ is an isomorphism and $\mb{A}^k \to \mb{A}^k/G$ is flat so the equivalent conditions of the theorem hold. 

\end{proof} 

The above results allow us to reduce to the case where our group has no pseudoreflections. Let $H \subset G$ be the subgroup generated by pseodoreflections. First note that if $\gamma \in H$ fixes the hyperplane $\mc{H} \subset \mb{A}^k$, then $g\gamma g^{-1}$ fixes $g\mc{H}$ for any $g \in G$ so that $H$ is a normal subgroup. Denote $\widetilde{G} := G/H$. 

\begin{prop}\label{prop:cst} In the situation above, there is a natural morphism 

$$
\Hilb^r[\mb{A}^k/G] \to \Hilb^r[\mb{A}^k/\widetilde{G}]
$$

\noindent which induces an isomorphism $Y_{r,G} \cong Y_{r,\widetilde{G}}$. \end{prop}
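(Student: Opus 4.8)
The plan is to produce the morphism $\Hilb^r[\mb{A}^k/G] \to \Hilb^r[\mb{A}^k/\widetilde{G}]$ by exhibiting a functor between the relevant moduli functors, using the Chevalley--Shephard--Todd isomorphism $\mb{A}^k/H \cong \mb{A}^k$ for the pseudoreflection subgroup $H$. Concretely, since $H$ is generated by pseudoreflections, Theorem \ref{thm:cst} gives $\mb{A}^k/H \cong \mb{A}^k$, and the residual action of $\widetilde G = G/H$ on $\mb{A}^k/H$ transports to a linear action on this new copy of $\mb{A}^k$ (linear because CST realizes the quotient map by the basic invariants, and $\widetilde G$ permutes these homogeneously). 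Thus $[\mb{A}^k/G] = [(\mb{A}^k/H)/\widetilde G] \cong [\mb{A}^k/\widetilde G]$ as orbifolds? — no: the quotient stack $[\mb{A}^k/G]$ is \emph{not} isomorphic to $[\mb{A}^k/\widetilde G]$, since $H$ still acts with stabilizers. Rather, there is a morphism of stacks $[\mb{A}^k/G] \to [(\mb{A}^k/H)/\widetilde G] = [\mb{A}^k/\widetilde G]$, namely the rigidification/coarsening along $H$. Pulling back a flat family of length-$r$ substacks of $[\mb{A}^k/\widetilde G]$ along this morphism, or rather pushing forward via the finite flat map $\mb{A}^k \to \mb{A}^k/H = \mb{A}^k$, is what will produce the map on Hilbert schemes.

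First I would make the identification precise: the ring of $H$-invariants $\mb{C}[x_1,\dots,x_k]^H$ is a polynomial ring $\mb{C}[f_1,\dots,f_k]$ (CST), the map $\mb{C}[x,y]/I \mapsto$ its structure as a module records $G$-equivariant ideals, and a $G$-equivariant ideal $I \subset \mb{C}[x]$ with $\mb{C}[x]/I \cong \mb{C}[G]^r$ restricts: $(I \cap \mb{C}[x]^H)$ is a $\widetilde G$-equivariant ideal of $\mb{C}[x]^H = \mb{C}[f]$. The key numerical check is that $\mb{C}[x]^H/(I\cap\mb{C}[x]^H) = (\mb{C}[x]/I)^H \cong (\mb{C}[G]^r)^H$, and since $\mb{C}[G] = \mb{C}[G/H]^{\oplus |H|}$ as... no, more carefully $(\mb{C}[G])^H = \mb{C}[H\backslash G] = \mb{C}[\widetilde G]$ as a $\widetilde G$-representation (taking $H$-invariants of the regular representation of $G$ yields the regular representation of $G/H$). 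Hence $(\mb{C}[x]/I)^H \cong \mb{C}[\widetilde G]^r$, so $I \cap \mb{C}[x]^H$ defines a point of $\Hilb^r[\mb{A}^k/\widetilde G]$. This assignment is visibly functorial in flat families (taking invariants under the finite group $H$ commutes with flat base change on a reduced/normal base, or one works with the universal family directly), giving the morphism.

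Next I would establish that it restricts to an isomorphism $Y_{r,G} \cong Y_{r,\widetilde G}$ between the distinguished components. The point is that on the open locus of $r$ distinct free $G$-orbits, the construction above sends a free $G$-orbit (which is $H$-many free $\widetilde G$-orbits' worth of points... no) — more precisely, a free $G$-orbit in $\mb{A}^k$ maps under $\mb{A}^k \to \mb{A}^k/H$ to a single free $\widetilde G$-orbit in $\mb{A}^k/H \cong \mb{A}^k$ (a free $G$-orbit has $|G| = |H|\cdot|\widetilde G|$ points; the $H$-action is free on it so the image has $|\widetilde G|$ points, and $\widetilde G$ acts freely on the image because the $G$-stabilizer was trivial). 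So the dense open of $Y_{r,G}$ parametrizing $r$ distinct free $G$-orbits maps isomorphically onto the dense open of $Y_{r,\widetilde G}$ parametrizing $r$ distinct free $\widetilde G$-orbits (the inverse sends such an orbit configuration to its preimage in $\mb{A}^k$). Since both $Y_{r,G}$ and $Y_{r,\widetilde G}$ are irreducible (by definition, as closures of these loci) and the morphism is defined on all of $\Hilb^r[\mb{A}^k/G]$, I would argue the map $Y_{r,G} \to Y_{r,\widetilde G}$ is birational and proper (properness from compatibility with the Hilbert--Chow morphisms to $\Sym^r(\mb{A}^k/G) = \Sym^r(\mb{A}^k/\widetilde G)$, the two symmetric products being literally equal since $\mb{A}^k/G = (\mb{A}^k/H)/\widetilde G = \mb{A}^k/\widetilde G$ as coarse spaces), then invoke that both are smooth — or at least normal — so a proper birational morphism that is an isomorphism over a dense open and whose target is normal, with the source also normal, forces an isomorphism via Zariski's main theorem, once one checks the map is quasi-finite (injective on the free locus and its closure behaves well). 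Alternatively, mirror the two-directional argument of Corollary \ref{cor:cst}: construct the inverse on $Y_{r,\widetilde G}$ directly by pulling back the universal family along the flat map $\mb{A}^k \to \mb{A}^k/H$ and taking the resulting flat family of $G$-orbits.

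\textbf{Main obstacle.} The delicate point is verifying that ``take $H$-invariants of the ideal'' genuinely defines a \emph{morphism of schemes} $\Hilb^r[\mb{A}^k/G] \to \Hilb^r[\mb{A}^k/\widetilde G]$, i.e.\ that it works in flat families over an arbitrary (possibly non-reduced) base $B$: one needs $(\mc{O}_{\mc U}/\mathcal{I})^H$ to be flat over $B$ of the right rank and to commute with base change, which uses that $|H|$ is invertible in $\mb{C}$ (so taking $H$-invariants is an exact functor, a direct summand) — this is where the characteristic-zero hypothesis is silently essential. The rest — the representation-theoretic identity $(\mb{C}[G])^H \cong \mb{C}[\widetilde G]$ and the orbit count on the free locus — is routine.
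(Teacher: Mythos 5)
Your construction of the morphism $\varphi:\Hilb^r[\mb{A}^k/G]\to\Hilb^r[\mb{A}^k/\widetilde{G}]$ is the same as the paper's: the paper forms the fiberwise quotient $\mc{U}_r/H$ of the universal family, i.e.\ the relative $\Spec$ of $\OO_{\mc{U}_r}^H$, gets flatness from the fact that $H$-invariants are a direct summand in characteristic $0$ (your ``main obstacle''), checks the embedding into $\Hilb^r[\mb{A}^k/G]\times\mb{A}^k$ fiberwise, and uses $\mb{C}[G]^H\cong\mb{C}[\widetilde{G}]$ as $\widetilde{G}$-representations. Up to this point you have reproduced the argument.

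The gap is in your primary argument that $\varphi$ restricts to an isomorphism $Y_{r,G}\cong Y_{r,\widetilde{G}}$. The route through properness, birationality and Zariski's main theorem needs two inputs you do not have: normality of $Y_{r,\widetilde{G}}$ and quasi-finiteness of $\varphi|_{Y_{r,G}}$. The proposition is stated for arbitrary $k$ and arbitrary finite $G$, where components of equivariant Hilbert schemes are not known to be smooth or even normal (the smoothness established in the paper is special to $\mb{A}^2$), and quasi-finiteness over the non-free locus is essentially the content of what is being proved: a priori distinct $G$-equivariant ideals could have the same $H$-invariant part, possibly in positive-dimensional families, so it cannot simply be ``checked'' as an afterthought. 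The paper avoids all of this by doing what you mention only in your closing one-sentence alternative: it constructs an explicit inverse $\psi$ on $Y_{r,\widetilde{G}}$ by pulling back the universal family $\rho:\mc{V}_r\to\mb{A}^k$ along the quotient map $\mb{A}^k\to\mb{A}^k/H\cong\mb{A}^k$, which is flat precisely because $H$ is generated by pseudoreflections (this is the only place Theorem \ref{thm:cst} enters); the resulting family $\mc{Z}_r=\mc{V}_r\times_{\mb{A}^k/H}\mb{A}^k$ is then flat, its general fiber consists of $r$ distinct free $G$-orbits, so by flatness every fiber carries $r$ copies of the regular representation of $G$, and closed embeddings are preserved by base change, giving $\psi:Y_{r,\widetilde{G}}\to Y_{r,G}$. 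Since $\varphi$ and $\psi$ are mutually inverse on the dense open locus of $r$ distinct free orbits, they are inverse everywhere. To complete your proof you should develop that alternative (in particular the flatness of the pulled-back family and the constancy of the representation content of its fibers) and drop the ZMT argument, or at best confine it to the $k=2$ cases where smoothness is available.
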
 

\begin{proof} We construct this isomorphism explicitly. Let $\mc{U}_r \to \Hilb^r[\mb{A}^k/G]$ be the universal family. It comes equipped with a $G$-equivariant map $\mc{U}_r \to \mb{A}^k$. The fiberwise quotient $\mc{U}_r/H \to \Hilb^r[\mb{A}^k/G]$ is a flat family of $\widetilde{G}$-equivariant schemes of length $r|G|/|H| = r |\widetilde{G}|$. To see flatness note that $\OO_{\mc{U}_r/H} = \OO_{\mc{U}_r}^H$ is a direct summand of the flat module $\OO_{\mc{U}_r}$ since we are in characteristic $0$.   

The natural map 

$$
\mc{U}_r/H \to \mb{A}^k/H \cong \mb{A}^k
$$

\noindent induces a map $\mc{U}_r/H \to \Hilb^r[\mb{A}^k/G] \times \mb{A}^k$. We need to check that this is an embedding or equivalently that the $G$-equivariant morphism of $\OO_{\Hilb^r[\mb{A}^k/G]}$-algebras 

$$
\CC[\mb{A}^n]^H \otimes \OO_{\Hilb^r[\mb{A}^k/G]} \to \OO_{\mc{U}_r}^H
$$

\noindent is surjective.  We can check surjectivity on fibers; over the point $[J] \in \Hilb^r[\mb{A}^k/G]$ corresponding to some $G$-invariant ideal, this map is just $\CC[\mb{A}^n]^H \to (\CC[\mb{A}^n]/J)^H = \CC[\mb{A}^n]^H/(J \cap \CC[\mb{A}^n]^H)$ which is surjective.  

Moreover the regular representation is preserved by taking invariants, $\mb{C}[G]^H \cong_{\widetilde{G}} \mb{C}[\widetilde{G}]$. Thus $\mc{U}_r/H \to \Hilb^r[\mb{A}^k/G]$ is a flat family of $\widetilde{G}$-equivariant subschemes of $\mb{A}^k$ of length $r|\widetilde{G}|$ carrying $r$ copies of the regular representation and so induces a morphism 

$$
\varphi : \Hilb^r[\mb{A}^k/G] \to \Hilb^r[\mb{A}^k/\widetilde{G}]. 
$$

To construct an inverse over $Y_{r,\widetilde{G}}$, take the universal family

$$
\xymatrix{\mc{V}_r \ar[r]^\rho \ar[d] & \mb{A}^k \\ Y_{r,\widetilde{G}} & }
$$

\noindent Since $H$ is generated by pseudoreflections, pulling back the quotient map $\mb{A}^k \to \mb{A}^k/H \cong \mb{A}^k$ along $\rho$ gives a flat family $\mc{Z}_r$:

$$
\xymatrix{\mc{Z}_r = \mc{V}_r \times_{\mb{A}^k} \mb{A}^k \ar[r] \ar[d] & \mb{A}^k \ar[d]^{\mod H} \\
\mc{V}_r \ar[r]^\rho \ar[d] & \mb{A}^k \\
Y_{r,\widetilde{G}} & }
$$

\noindent A general fiber of $\mc{Z}_r \to Y_{r,\widetilde{G}}$ consists of $r$ distinct free $G$ orbits so by flatness every fiber carries $r$ copies of the regular representation of $G$. Furthermore closed embeddings are stable under base change so $\mc{Z}_r \to Y_{r,\widetilde{G}}$ is a flat family of subschemes of $\mb{A}^k$ inducing a morphism 

$$
\psi : Y_{r,\widetilde{G}} \to Y_{r,G} \subset \Hilb^r[\mb{A}^k/G].
$$ 

\noindent Since $\psi$ and $\varphi$ are inverses on the dense open subset parametrizing $r$ distinct free orbits they give an isomorphism everywhere. 

\end{proof}

\begin{rem} Note that in the above proof, there is always a morphism $\Hilb^r[\mb{A}^k/G] \to \Hilb^r[\mb{A}^k/\widetilde{G}]$ for any normal subgroup $H \subset G$. The fact that $H$ is generated by pseudoreflections is only used in constructing the inverse over $Y_{r,\widetilde{G}}$. 
\end{rem}

Proposition \ref{prop:cst} justifies our restriction to considering only the cyclic subgroups $G_{a,b;n} \subset GL_2$. Indeed if $G \subset GL_2$ is any abelian subgroup with no pseudoreflections then it must be cyclic \cite[Satz 2.9]{brieskorn}. By Corollary \ref{cor:connectedness} $\Hilb^r[\mb{A}^2/G] = Y_{r,G}$ for $G$ abelian. Consequently, every equivariant Hilbert scheme for an abelian group action on $\mb{A}^2$ is isomorphic to $\Hilb^r[\mb{A}^2/G_{a,b;n}]$ for some $a,b$ and $n$. 

\begin{cor}\label{cor:coprime} Suppose one of $a$ and $b$, say $a$ without loss of generality, is not coprime to $n$ so that $a = da'$ and $n = dn'$. Then $H^r_{a,b;n} \cong H^r_{a',b;n'}$. \end{cor}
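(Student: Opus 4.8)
The statement to prove is Corollary \ref{cor:coprime}: if $a = da'$ and $n = dn'$ with $d = \gcd(a,n) > 1$, then $H^r_{a,b;n} \cong H^r_{a',b;n'}$. The natural approach is to show that the pseudoreflection reduction of Proposition \ref{prop:cst} applies: the cyclic group $G_{a,b;n}$ has a subgroup $H$ generated by pseudoreflections, the quotient $\widetilde{G} = G_{a,b;n}/H$ is isomorphic to $G_{a',b;n'}$, and then Proposition \ref{prop:cst} together with $\Hilb^r[\mb{A}^2/G] = Y_{r,G}$ (from Corollary \ref{cor:connectedness}) gives the isomorphism. So the work is entirely a matter of identifying the pseudoreflection subgroup and computing the quotient action explicitly.

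\textbf{Key steps.} First I would recall that $G_{a,b;n}$ is generated by $g: (x,y) \mapsto (\zeta^a x, \zeta^b y)$ where $\zeta$ is a primitive $n^{\text{th}}$ root of unity. An element $g^k$ is a pseudoreflection on $\mb{A}^2$ precisely when it fixes a line, i.e.\ when exactly one of $\zeta^{ak}, \zeta^{bk}$ equals $1$ (it cannot fix both since $G$ acts faithfully, unless $g^k = \mathrm{id}$). Since $\gcd(a,b) = 1$, at most one of $a,b$ shares a factor with $n$; by hypothesis $a$ does, so $b$ is coprime to $n$ (if $\gcd(b,n) = e > 1$ as well, then $e \mid \gcd(a,b)\cdot(\text{stuff})$ — I'd need to double check, but since $\gcd(a,b)=1$, $d$ and $e$ are coprime, and one reduces in two independent steps; for a single step assume WLOG only $a$ is non-coprime). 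Then $g^k$ fixes the $x$-axis iff $\zeta^{bk} = 1$ iff $n \mid bk$ iff $n \mid k$, which forces $g^k = \mathrm{id}$. And $g^k$ fixes the $y$-axis iff $\zeta^{ak} = 1$ iff $n \mid ak$ iff $n' \mid k$ iff $k$ is a multiple of $n'$. So the pseudoreflection subgroup is $H = \langle g^{n'} \rangle$, which is cyclic of order $d$ and acts only on the $y$-coordinate by $y \mapsto \zeta^{a n'} y = \zeta^{a n / d}\, y$. The second step is to compute $\widetilde{G} = G_{a,b;n}/H$: it is cyclic of order $n' = n/d$, generated by the image $\bar g$ of $g$. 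To see its action on $\mb{A}^2/H \cong \mb{A}^2$ I would use coordinates where the quotient by $H$ (acting on $y$) is given by $(x, y) \mapsto (x, y^d) =: (x, \tilde y)$; then $\bar g$ sends $x \mapsto \zeta^a x$ and $\tilde y = y^d \mapsto \zeta^{bd}\, y^d = \zeta^{bd}\, \tilde y$. Writing $\omega := \zeta^d$, a primitive $n'^{\text{th}}$ root of unity, the action is $(x, \tilde y) \mapsto (\omega^{a'} x, \omega^b \tilde y)$ since $\zeta^a = \zeta^{d a'} = \omega^{a'}$ and $\zeta^{bd} = \omega^b$. One checks $\gcd(a', b) = 1$ because $\gcd(a,b) = 1$ implies $\gcd(a', b) = 1$. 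Hence $\widetilde{G} \cong G_{a',b;n'}$. The final step is to invoke Proposition \ref{prop:cst}, which gives $Y_{r,G_{a,b;n}} \cong Y_{r,\widetilde{G}} = Y_{r, G_{a',b;n'}}$, and then Corollary \ref{cor:connectedness} identifies $Y_{r,G} = \Hilb^r[\mb{A}^2/G] = H^r_{a,b;n}$ on the left and $Y_{r,\widetilde G} = H^r_{a',b;n'}$ on the right, completing the proof.

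\textbf{Main obstacle.} None of the steps is deep — the content is already packaged in Proposition \ref{prop:cst} and Corollary \ref{cor:connectedness} — so the only thing requiring care is the bookkeeping in identifying $H$ as exactly the pseudoreflection subgroup and checking the quotient action is in the normal form $G_{a',b;n'}$ with the new exponents still coprime. In particular one must be careful that $b$ remains unchanged (it is: $H$ touches only $y$, and after quotienting the $y$-weight becomes $bd$ against the root $\zeta^d$, i.e.\ still $b$), and that we have not accidentally introduced pseudoreflections in $\widetilde{G}$ — but we have not, since $\gcd(a',n') = \gcd(a/d, n/d) = 1$ when $d = \gcd(a,n)$ and $\gcd(b,n') = 1$ divides $\gcd(b,n) = 1$, so $\widetilde G$ acts without pseudoreflections, consistent with $\widetilde G$ being cyclic without pseudoreflections as in \cite[Satz 2.9]{brieskorn}. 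If both $a$ and $b$ failed to be coprime to $n$ this single reduction would not suffice, but $\gcd(a,b)=1$ rules that out, so there is nothing further to handle.
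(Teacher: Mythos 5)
Your proposal is correct and follows essentially the same route as the paper: identify $H = \langle g^{n'} \rangle$ as a normal subgroup generated by a pseudoreflection of order $d$, observe that the quotient is cyclic of order $n'$ acting with weights $a'$ and $b$, and conclude via Proposition \ref{prop:cst} together with $\Hilb^r[\mb{A}^2/G] = Y_{r,G}$ from Corollary \ref{cor:connectedness}. Your extra bookkeeping (explicit quotient coordinates $(x, y^d)$, the coprimality checks, and the remark about iterating when $b$ is also non-coprime to $n$) only makes explicit what the paper leaves implicit.
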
 

\begin{proof} The generator of $G_{a,b;n}$ satisfies

$$
\left( \begin{array}{cc} \zeta_n^a & 0 \\ 0 & \zeta_n^b \end{array} \right)^{n'} = \left(\begin{array}{cc} 1 & 0 \\ 0 & \zeta_n^{bn'}\end{array} \right).
$$

\noindent This is a nontrivial pseudoreflection generating a cyclic subgroup $H \subset G_{a,b;n}$ of order $d$. The quotient $G_{a,b;n}/H$ is a cyclic group of order $n'$ acting by weights $a'$ and $b$ so we get the required isomorphism.  
\end{proof} 

In light of Corollary \ref{cor:coprime}, it suffices to consider only the case when $n$ is coprime to $a$ and $b$. Indeed if $n = dn'$ and $a = da'$, then sending $n$ to $n + |ab|$ is equivalent by the corollary to sending $n'$ to $n' + |a'b|$ and so is compatible with periodicity and quasipolynomiality statements with period $|ab|$. We will only need this reduction in the proof of Theorem \ref{thm:qpolynomial}. 

\subsection{Proof of Theorem \ref{thm:bijection}}

Recall we are going to prove the following: 

\begin{thm1.2} Fix integers $r > 0$ and $a,b$ with $ab > 0$. There is a natural bijection $B^r_{a,b;n} \to B^r_{a,b;n+ab}$ that preserves the Betti statistic for $n > rab$.
\end{thm1.2} 

Since the involution $(a,b) \mapsto (-a,-b)$ does not change the family of cyclic groups $\{G_{a,b;n}\}_n$, we assume without loss of generality that $a,b > 0$. For any $(a,b;n)$-balanced partition $\lambda$ and $0 \le k \le n-1$, we denote by

$$
S_k := \{(i,j) \in \lambda \ | \ ai + bj = k \mod n\}
$$ 

\noindent the set of boxes in $\lambda$ labeled by $k \pmod{n}$. We also denote by
$$
D_k := \{(i,j) \in \ZZ_{\geq 0}^2 \ | \ ai + bj = k\}
$$

\noindent the \textit{$k^{th}$ diagonal}.

First we need the following lemmas:

\begin{lemma}\label{lemma:armandleg}
Suppose $\lambda$ is an $(a,b;n)$-balanced partition and $n> rab$. Let $k$ be an integer with $rab \le k \le n - 1$ satisfying $k = rab + au + bv$ for some nonnegative integers $u$ and $v$. Then the set $S_k$ can be split into two disjoint sets $A_k$ and $B_k$ which satisfy the properties:

\begin{enumerate}
\item If $(i,j)\in A_k$ then either $i < b$ or $(i-b,j+a)\in A_k$.
\item If $(i,j) \in B_k$ then either $j < a$ or $(i+b,j-a)\in B_k$.
\end{enumerate}

\end{lemma}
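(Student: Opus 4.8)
The plan is to define $A_k$ and $B_k$ directly using the partition structure and then verify the two closure properties, which will follow from the fact that $\lambda$ is a Young diagram together with the balanced condition on the colors $k-au$ and $k-bv$ for small parameters. The key geometric observation is that moving from a box $(i,j)$ to $(i-b,j+a)$ moves within the residue class mod $n$ (since $a(i-b)+b(j+a)=ai+bj$), so $S_k$ is a disjoint union of "anti-diagonal chains" along the direction $(-b,a)$; each such chain, intersected with the first quadrant, has a well-defined leftmost box (with $0\le i<b$) and topmost box. I would like to define $A_k$ to be the union of those chains whose "entry point" into the first quadrant lies on the boundary of the Young diagram in a way that is forced to stay inside, and $B_k$ the complementary set — but more cleanly, one should phrase it via the actual diagonals $D_{k}, D_{k+n}, D_{k+2n},\dots$: since $k\ge rab$ and $n>rab$, the relevant diagonals $D_k$ with $k$ in the stated range are "long enough" that one can compare the $r$ boxes of color $k$ against the shape.

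The concrete approach: write each box of $S_k$ uniquely as lying on some diagonal $D_{k+mn}$ for $m\ge 0$. On a fixed diagonal $D_{k+mn} = \{(i,j): ai+bj=k+mn\}$, the boxes of $\lambda$ form a contiguous segment (because $\lambda$ is a Young diagram: if $(i,j)\in\lambda$ and $(i',j')\in\lambda$ with $i'<i$, $j'>j$ on the same diagonal, every lattice point of that diagonal between them has coordinates dominated by one of the two and hence also lies in $\lambda$ — here I would use $\gcd(a,b)=1$ to control the lattice points on $D_{k+mn}$, which step in increments of $(-b,a)$). Then I would declare a box $(i,j)$ on such a segment to lie in $A_k$ if everything to its "upper-left" along the diagonal (i.e. $(i-b,j+a),(i-2b,j+2a),\dots$ as long as it stays in $\ZZ_{\ge0}^2$) also lies in $\lambda$, and in $B_k$ otherwise; then property (1) is essentially immediate from this definition, and I would need to check property (2) for the complement — that if $(i,j)\in B_k$ and $j\ge a$ then $(i+b,j-a)\in B_k$, i.e. $(i+b,j-a)\in\lambda$ and it is also not "upper-left-closed". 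The membership $(i+b,j-a)\in\lambda$ follows from the contiguity of the diagonal segment (since $(i,j)\in\lambda$ is not the lower-right endpoint — that is where the hypothesis $k\ge rab$, ensuring we are not at the short end of the diagonal near the axes, and the use of $u,v$ come in), and the failure of upper-left-closedness propagates downward along the diagonal by the contrapositive.

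The numerical hypotheses $n>rab$ and $k=rab+au+bv$ will be used to guarantee that on every diagonal $D_{k+mn}$ meeting $\lambda$, the Young-diagram segment does not terminate "early" against a coordinate axis in a way that breaks the chain: specifically, because $\lambda$ has exactly $r$ boxes of each color and only $n$ colors, its total size is $rn$ and its rows/columns have length $<$ something controlled by $rab<n$, so a diagonal segment of $\lambda$ of color $k$ with $k\ge rab$ cannot reach the boundary region $\{i<b\}\cup\{j<a\}$ except possibly at its two endpoints, and the decomposition $k=rab+au+bv$ pins down exactly which endpoint. I expect the main obstacle to be exactly this bookkeeping: proving that the diagonal segments are genuinely contiguous and that the $A$/$B$ labels are consistent across the different diagonals $D_{k+mn}$ as $m$ varies, so that the global sets $A_k=\bigsqcup_m(A_k\cap D_{k+mn})$ and $B_k=\bigsqcup_m(B_k\cap D_{k+mn})$ satisfy the stated closure properties without any box being forced into both. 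The cardinality count ($|A_k|+|B_k|=r$) should then be a byproduct, or may not even be needed for this lemma as stated; I would prove contiguity first, then define the split per diagonal, then verify (1) and (2) by the sliding argument above.
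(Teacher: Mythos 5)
There is a genuine gap, and it sits exactly at the step you flag as the crux. Your plan rests on the claim that $\lambda\cap D_{k'}$ is a contiguous segment of lattice points along each diagonal $D_{k'}$, justified by saying that any intermediate lattice point "has coordinates dominated by one of the two" endpoints. That justification is false: consecutive lattice points on $D_{k'}$ differ by $(-b,a)$, so an intermediate point has strictly larger $i$ than the upper endpoint and strictly larger $j$ than the lower endpoint, hence is dominated by neither (already for $a=b=1$: $(2,0),(0,2)\in\lambda=(3,1,1)$ but $(1,1)\notin\lambda$). Worse, contiguity is not just unproved but typically \emph{false} in the situation of the lemma: the whole point of the hypothesis $rab\le k\le n-1$ is that the diagonal $D_k$ carries at least $r+1$ lattice points while $\lambda$ has only $r$ boxes of color $k$, so there is a gap box $(i_0,j_0)\in D_k\setminus\lambda$, and when both "sides" of the color class are nonempty, $\lambda\cap D_k$ consists of two segments separated by that gap (this is visible in the paper's Figure for $\varphi_4$ on a $(1,1;5)$-balanced partition). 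Once contiguity fails, your verification of property (2) — that for a non-upper-left-closed box $(i,j)$ with $j\ge a$ the box $(i+b,j-a)$ lies in $\lambda$ — has no support; indeed that statement \emph{is} the hard content of the lemma, and your sketch argues for it only by appeal to the (false) contiguity plus vague bookkeeping about "not terminating early against an axis".

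What is missing is the counting mechanism that the paper uses and that your proposal explicitly sets aside ("the cardinality count \dots may not even be needed"). The paper defines $A_k,B_k$ by comparing coordinates with a gap point $(i_0(k),j_0(k))\in D_k\setminus\lambda$ (existence from $k\ge rab$, as above), then defines $\varphi_k:S_k\to S_{k-ab}$ by $(i,j)\mapsto(i,j-a)$ on $A_k$ and $(i,j)\mapsto(i-b,j)$ on $B_k$, checks injectivity (using that color $k$ occurs at most twice on a $b\times a$ rectangle together with the gap), and then invokes balancedness, $\#S_k=\#S_{k-ab}=r$, to force $\varphi_k$ to be a bijection; the two closure properties then fall out of surjectivity by a short contradiction (if $(i,j)\in A_k$, $i\ge b$, but $(i-b,j+a)\notin\lambda$, then $(i-b,j)\in S_{k-ab}$ would have no preimage). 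Your definition of the split (upper-left-closed versus not) does agree a posteriori with the paper's, but without an analogue of this pigeonhole/bijection step — some quantitative use of the balanced hypothesis linking $S_k$ to $S_{k-ab}$ — the closure property (2) cannot be established, and the proposal as written does not prove the lemma.
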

\begin{proof}
First notice that for $rab \le k \le n -1$ satisfying either $(i)$ or $(ii)$, the number of $(i,j)\in \ZZ_{\geq 0}^2$ such that $ai+bj=k$ is at least $r+1$, therefore there exist $(i_0(k),j_0(k))$ such that $ai_0(k)+bj_0(k)=k$ and $(i_0(k),j_0(k))\notin \lambda$. This means that the entries labeled $k \pmod{n}$ split into two sets, $A_k$ and $B_k$ defined by 

\begin{align*}
A_k &:= \{(i,j) \in \lambda \ | \ ai + bj = k \mod n, \ i \le i_0(k)\} \\
B_k &:= \{(i,j) \in \lambda \ | \ ai + bj = k \mod n, \ j \le j_0(k)\}
\end{align*}

\noindent Note that these sets must be disjoint because if $(i,j) \in \lambda$ satisfies $i < i_0(k)$ and $j < j_0(k)$ then $ai + bj < k < n$ so $(i,j) \notin S_k$.

We define a map $\varphi_k : S_k \to S_{k - ab}$ by $(i,j)\in A_k$ maps to the entry $(i,j-a)$ and $(i,j)\in B_k$ maps to the entry $(i-b,j)$. We first claim this gives an injective map. The map $\varphi_k$ is clearly injective on each set $A_k$ or $B_k$ individually so suppose there is $(i,j) \in A_k$ and $(i',j') \in B_k$ such that $\varphi_k(i,j) = \varphi_k(i',j')$. Then $(i, j-a) = (i' - b, j')$ so $(i,j)$ and $(i',j')$ are on the corners of an $b \times a$ rectangle. The label $k \pmod{n}$ appears in such a rectangle at most twice, namely at $(i,j)$ and $(i',j')$. This contradicts the fact that there is a box $(i_0, j_0) \notin \lambda$ labeled by $k$ with $i \le i_0$ and $j \le j_0$. 

Since $\lambda$ is a balanced partition, the number of boxes with each label have the same cardinality and so the injective map $\varphi_k$ must in fact be bijective. Suppose for the sake of contradiction that the first condition in the lemma is violated, so we have $(i,j)\in A_k$ but $i \geq b$ and $(i-b,j+a)\notin A_k$. Then the entry $(i-b,j)$ would be in $\lambda$, labeled $k-ab$ but $\varphi^{-1}_k(i-b,j) \notin \lambda$, which gives a contradiction. We can argue similarly for the entries in $B_k$.
\end{proof}

\begin{figure}[h]
\includegraphics[scale=0.4]{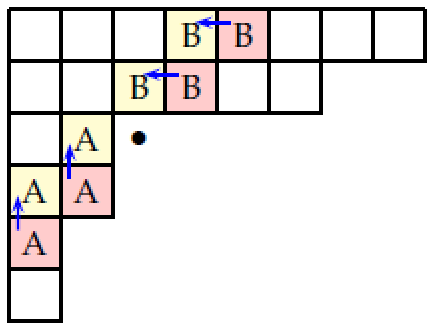}
\caption{\small This figure illustrates the map $\varphi_4$ on a $(1,1;5)$-balanced partition. The box marked by a bullet is the $(i_0,j_0) \notin \lambda$ with $ai_0 + bj_0 = 4$ that we use to define the sets $A_4$ and $B_4$.}
\end{figure}

\begin{lemma}\label{lemma:unique} The decomposition $S_k = A_k \cup B_k$ above does not depend on a choice of $(i_0(k),j_0(k)) \notin \lambda$ on the $k^{th}$ diagonal. In particular the decomposition is the unique one for which the map $\varphi_k$ is a bijection. \end{lemma}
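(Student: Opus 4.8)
The plan is to reduce both assertions to a single uniqueness statement and then invoke Lemma~\ref{lemma:armandleg}. Call a set-partition $S_k = A \sqcup B$ \emph{admissible} if the map $\psi_{A,B}$ sending $(i,j) \in A$ to $(i,j-a)$ and $(i,j) \in B$ to $(i-b,j)$ is a well-defined bijection onto $S_{k-ab}$ (note $k - ab \geq (r-1)ab \geq 0$, so $S_{k-ab}$ is defined). Lemma~\ref{lemma:armandleg} exhibits one admissible partition, namely $(A_k, B_k)$ with $\psi_{A_k,B_k} = \varphi_k$, for every choice of $(i_0(k), j_0(k)) \notin \lambda$ on the $k^{th}$ diagonal. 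So it suffices to show the admissible partition is unique; this gives at once that the construction does not depend on the choice of $(i_0(k), j_0(k))$ and that it is the only partition making $\varphi_k$ a bijection.

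For uniqueness, the key observation is that for a box $(i,j)$ on a diagonal $D_{k'}$ with $k' \equiv k \pmod n$, its two possible $\psi$-images $(i-b, j)$ and $(i, j-a)$ differ by $(b,-a)$, so the ``left'' image of a box coincides with the ``down'' image of its upper-left neighbour on that diagonal. Suppose $(A,B)$ and $(A',B')$ are both admissible and $A \neq A'$; swapping the two if necessary, fix a box $p_0 = (i,j) \in A \cap B'$. We will show by induction that $p_m := (i - mb,\ j + ma)$ lies in $A \cap B'$ for every $m \geq 0$. Assume this for $p_m$. Since $\psi_{A',B'}$ is well defined and $p_m \in B'$, the box $p_m - (b,0) = (i-(m+1)b,\ j+ma)$ to its immediate left has nonnegative coordinates; it is weakly southwest of $p_m \in \lambda$, hence lies in $\lambda$, so it belongs to $S_{k-ab}$. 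As $\psi_{A,B}$ is a bijection onto $S_{k-ab}$, this box has a unique $\psi_{A,B}$-preimage, and the only candidates are $p_{m+1}$ (via a down move, which requires $p_{m+1} \in A$) and $p_m$ itself (via a left move, which requires $p_m \in B$); since $p_m \in A$, the preimage must be $p_{m+1}$, so $p_{m+1} \in A$. If $p_{m+1}$ were in $A'$ rather than $B'$, then $\psi_{A',B'}$ would send the distinct boxes $p_m$ (a left move) and $p_{m+1}$ (a down move) to the common point $(i-(m+1)b,\ j+ma)$, contradicting injectivity; hence $p_{m+1} \in B'$, closing the induction. But then $p_m \in \ZZ_{\geq 0}^2$ for all $m \geq 0$, which is absurd since the first coordinate $i - mb$ is eventually negative. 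Therefore $A \cap B' = \emptyset$; by symmetry $A' \cap B = \emptyset$, and since $A \sqcup B = A' \sqcup B' = S_k$ we conclude $A = A'$ and $B = B'$.

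The crux of the argument is that the walk $p_0, p_1, p_2, \dots$ up the anti-diagonal is forced to continue at every step: the two collision arguments --- one against injectivity of $\psi_{A,B}$, one against injectivity of $\psi_{A',B'}$ --- are exactly what rule out the walk branching or terminating, while the well-definedness constraints of an admissible partition ($i \geq b$ on $B$ and $j \geq a$ on $A$) keep the intermediate boxes honest until the unbounded descent in the first coordinate produces the contradiction. No further input from the balanced hypothesis is needed here beyond its use in Lemma~\ref{lemma:armandleg} to guarantee that an admissible partition exists at all.
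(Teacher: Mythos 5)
Your argument is correct, and it takes a genuinely different (and in fact stronger) route than the paper. The paper only ever compares the two threshold-type decompositions induced by two holes $(i_0,j_0),(i_1,j_1)\in D_k\setminus\lambda$: if they differ, it locates boxes in the disagreement zone between the holes and produces a single box of $S_{k-ab}$ whose only possible $\varphi_k$-preimage lies outside $\lambda$, contradicting surjectivity of $\varphi_k$ established in Lemma~\ref{lemma:armandleg}. You instead prove uniqueness among \emph{all} set-partitions $S_k=A\sqcup B$ whose associated down/left map is a bijection onto $S_{k-ab}$, via the forced infinite staircase $p_m=(i-mb,j+ma)$: surjectivity plus disjointness of $A,B$ forces the walk to extend in $A$, injectivity of the competing map forces it to stay in $B'$, and positivity of $b$ (the standing assumption $a,b>0$ in this subsection) kills it. This buys you the literal ``in particular'' clause of the lemma in its strongest reading, without using the threshold structure of hole-induced decompositions, and it isolates exactly what is needed (that $\lambda$ is a Young diagram and $a,b>0$; balancedness enters only through Lemma~\ref{lemma:armandleg} to supply an admissible decomposition), at the cost of an induction where the paper gets away with a one-shot local contradiction. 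Like the paper, you use the bijectivity of $\varphi_k$ from the \emph{proof} of Lemma~\ref{lemma:armandleg} rather than its statement, which is consistent with how the paper itself argues.
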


\begin{proof} Suppose for contradiction that there was an $(i_1(k),j_1(k)) \in D_k$ with $(i_1(k),j_1(k))$ and a decomposition of $S_k$ as

\begin{align*}
A_k' &:= \{(i,j) \in \lambda \ | \ ai + bj = k \mod n, \ i \le i_1(k)\} \\
B_k' &:= \{(i,j) \in \lambda \ | \ ai + bj = k \mod n, \ j \le j_1(k)\}
\end{align*}

\noindent that is different than the one induced by $(i_0(k),j_0(k))$. Then there must be some $(i',j'),(i'',j'') \in \lambda \cap D_k$ such that $i_0 \le i' - b < i' \le i'' < i'' + b \le i_1$ with $(i' - b, j' + a), (i'' + b, j' - a) \notin \lambda$. That is $(i',j'), (i'',j'') \in B_k$ but $(i',j'), (i'', j'') \in A_k'$. In this case, we can compute 
\begin{align*}
\varphi_k(i',j') &= (i' - b, j') \in S_{k - ab}  \\ 
\varphi_k(i'',j'') &= (i'' - b, j'') \in S_{k - ab}
\end{align*} 

\noindent On the other hand, $(i'',j'' - a) \in S_{k - ab}$ and $\varphi_k^{-1}(i'',j''-a) = (i'' + b, j'' - a) \notin \lambda$ contradicting that $\varphi_k$ is a bijection. 

\end{proof}

\begin{cor}\label{cor:armandleg} Let $(i_0,j_0) \in D_{rab}$ such that $(i_0,j_0) \notin \lambda$. Define 
\begin{align*}
A = \{(i,j) \ | \ i < i_0, j \geq j_0\} \\
B = \{(i,j) \ | \ i \geq i_0, j < j_0\}
\end{align*}

\noindent Then for any $k$ satisfying the conditions of Lemma \ref{lemma:armandleg}, $A_k = A \cap S_k$ and $B_k = B \cap S_k$ and the maps $\varphi_k$ extend to an injective map $\varphi : A \cup B \to \ZZ_{\geq 0}^2$ that is surjective onto $S_{k - ab}$ for all such $k$. 

\end{cor}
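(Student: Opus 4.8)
The plan is to bootstrap the two preceding lemmas, the only genuinely new input being a quadrant-avoidance property: since $\lambda$ is a Young diagram it is downward closed for the coordinatewise order on $\ZZ_{\geq 0}^2$, so $(i_0,j_0)\notin\lambda$ forces the whole ``northeast'' quadrant $Q:=\{(i,j):i\ge i_0,\ j\ge j_0\}$ to be disjoint from $\lambda$. Hence $\lambda\subseteq\{i<i_0,\ j<j_0\}\sqcup A\sqcup B$. I would then observe that for every admissible $k$ (that is, $rab\le k\le n-1$ and $k=rab+au+bv$ with $u,v\ge 0$) the set $S_k$ also misses the ``southwest'' quadrant $\{i<i_0,\ j<j_0\}$: a box there would have $0\le ai+bj<ai_0+bj_0=rab\le k<n$, forcing $ai+bj=k$ and hence $ai+bj\ge rab$, a contradiction. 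Together these give the disjoint decomposition $S_k=(A\cap S_k)\sqcup(B\cap S_k)$ for every admissible $k$.

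Next I would match this geometric decomposition with the combinatorial one of Lemma~\ref{lemma:armandleg}. The key point is that $P_k:=(i_0+u,j_0+v)$ lies on the diagonal $D_k$ (from $ai_0+bj_0=rab$ and $k=rab+au+bv$) and lies in $Q$, hence $P_k\notin\lambda$; so $P_k$ is a legitimate anchor in the construction of Lemma~\ref{lemma:armandleg}, giving $A_k=\{(i,j)\in S_k:\ i\le i_0+u\}$. I would then check that $S_k$ contains no box with $i_0\le i\le i_0+u$: such a box lies in $\lambda$ with $i\ge i_0$, hence (avoiding $Q$) has $j\le j_0-1$, so $ai+bj\le a(i_0+u)+b(j_0-1)=rab+au-b$, contradicting $ai+bj\ge k\ge rab+au$. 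Therefore $\{i\le i_0+u\}\cap S_k=\{i<i_0\}\cap S_k=A\cap S_k$ (the last equality because $S_k$ misses the southwest quadrant), so $A_k=A\cap S_k$; the symmetric estimate on the strip $j_0\le j\le j_0+v$ gives $B_k=B\cap S_k$. By Lemma~\ref{lemma:unique} this is consistent with any other admissible anchor on $D_k$, so in particular $A_k,B_k$ do not depend on the chosen representation $k=rab+au+bv$.

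Finally, to glue the $\varphi_k$: because $\varphi_k$ sends $A_k=A\cap S_k$ by $(i,j)\mapsto(i,j-a)$ and $B_k=B\cap S_k$ by $(i,j)\mapsto(i-b,j)$, all the $\varphi_k$ are restrictions of the single piecewise map $\varphi$ on $A\cup B$ given by those two formulas; one only ever applies it on $\bigsqcup_k S_k\subseteq\lambda$, where the images stay in $\ZZ_{\geq 0}^2$ (indeed in $\lambda$), so the slightly loose phrasing ``$\varphi:A\cup B\to\ZZ_{\geq 0}^2$'' is just a uniform description. Injectivity on $\bigsqcup_k S_k$ then follows since each $\varphi_k$ is injective by Lemma~\ref{lemma:armandleg}, and for $k\ne k'$ the images $S_{k-ab}$ and $S_{k'-ab}$ are disjoint because $k,k'\in[rab,n-1]$ are distinct modulo $n$; and $\varphi(S_k)=\varphi_k(S_k)=S_{k-ab}$ by Lemma~\ref{lemma:armandleg}.

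I expect the main obstacle to be purely the bookkeeping in the two ``empty strip'' diophantine estimates that identify $A_k$ and $B_k$ exactly; once those are in hand the rest is a formal assembly of Lemmas~\ref{lemma:armandleg} and~\ref{lemma:unique} together with downward-closedness of Young diagrams, and no further use of $\gcd(a,b)=1$ or of balancedness beyond what those lemmas already encode is needed.
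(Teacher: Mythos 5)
Your argument is correct and follows essentially the same route as the paper's (much terser) proof: quadrant-avoidance from downward-closedness of $\lambda$, the anchor $(i_0+u,\,j_0+v)\in D_k$ lying in the forbidden quadrant together with Lemma~\ref{lemma:unique}, and the piecewise formula extending the $\varphi_k$; your two strip estimates simply fill in the step the paper asserts as ``$A_k\subset A$ and $B_k\subset B$.'' Your reading of the injectivity claim as injectivity on $\bigsqcup_k S_k$ is the right one: the piecewise formula on all of $A\cup B$ genuinely identifies $(i,j)\in A$ with $(i+b,j-a)\in B$ whenever both lie in the domain (and can even leave $\ZZ_{\geq 0}^2$), and the paper's proof only writes down the extension without addressing this, so the restricted statement — which is what the proof of Theorem~\ref{thm:bijection} actually uses — is what should be, and in your write-up is, proved.
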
 

\begin{proof} First notice that $A$ and $B$ are disjoint. Since $rab \le k < n$ it follows that $S_k \subset A \cup B$. 

The boxes $(i,j)$ with $i \geq i_0$ and $j \geq j_0$ are not in $\lambda$. On the other hand, for every such $k$ there exists $(i',j') \in D_k$ with $i' \geq i_0$ and $j' \geq j_0$. Consequently for $(i,j) \in \lambda$, if $i < i'$ then $i < i_0$ and similarly if $j < j'$ then $j < j_0$ so that $A_k \subset A$ and $B_k \subset B$. The first result follows. Finally, we can extend the maps $\varphi_k$ by defining 

$$
\varphi(i,j) = \left\{ \begin{array}{lr} (i,j-a) & \text{if } (i,j) \in A \\ (i - b,j) & \text{if } (i,j) \in B\end{array} \right.
$$ 

\end{proof}

\begin{proof}[Proof of Theorem \ref{thm:bijection}]
We construct a bijection $B^r_{a,b;n} \to B^r_{a,b;n + ab}$. Let $\lambda \in B^r_{a,b;n}$. We will add boxes to the columns and rows of $\lambda$ as follows: 

\begin{enumerate}
\item If $(i,j)\in A_k$ and $n - b \le k \le n - 1$, then increase the length of column $j$ by $a$ boxes;
\item If $(i,j)\in B_k$ and $n - a \le k \le n - 1$, then increase the length of row $i$ by $b$ boxes.
\end{enumerate}

We can check that the process terminates in a partition since Lemma \ref{lemma:armandleg} guarantees that if $i>i'$, column $i'$ had at least as many boxes added as column $i$, and similarly for rows. We call the resulting partition $\psi(\lambda)$. Note that $\lambda \subset \psi(\lambda)$ as a subset of $\ZZ_{\geq 0}^2$.

We need to check that $\psi(\lambda) \in B^r_{a,b;n+ab}$. We can interpret the algorithm above as inserting $a$ boxes directly below each $(i,j) \in A_k$ with $n - b \le k \le n - 1$ and inserting $b$ boxes directly to the right of each $(i,j)\in B_k$ with $n - a \le k \le n - 1$. It is clear that these new boxes are labeled with $(a,b;n + ab)$-weight in the range $n \le k \le n + ab - 1$ and that the boxes of $\lambda$ are in bijection with the boxes of $\psi(\lambda)$ labeled with $(a,b; n+ab)$-weight in the range $0 \le k \le n -1$. Thus it suffices to check that we have inserted $r$ boxes of each weight $n \le k \le n + ab - 1$. Fix $k$ with $n \le k \le n + ab - 1$ and let 

$$
R_k = \{(i,j) \in \psi(\lambda) \ | \ ai + bj = k \pmod{n + ab} \}.
$$ 

\noindent Then $R_k \subset A \cup B$ and the restriction $\varphi : R_k \to S_{k - ab}$ is a bijection. Consequently, $\# R_k = \# S_{k - ab} = r$ since $\lambda$ is balanced and $\psi(\lambda) \in B^r_{a,b;n + ab}$. 

To check that $\psi$ is a bijection we start with $\mu \in B^r_{a,b;n + ab}$ and produce a $\lambda \in B^r_{a,b;n}$ with $\psi(\lambda) = \mu$. Indeed we obtain $\lambda$ by deleting all boxes of $\mu$ labeled by $k \pmod{n + ab}$ with $n \le k \le n + ab - 1$. Then the boxes of $\lambda$ labeled with $(a,b;n)$-weight $k$ correspond to the boxes of $\mu$ labeled with $(a,b;n + ab)$-weight $k$ with $0 \le k \le n - 1$ and it is clear that we can recover $\mu$ by inserting back in the boxes with larger $(a,b;n + ab)$-weight, i.e., $\psi(\lambda) = \mu$. 

Lastly, we need to check that the Betti statistic is preserved. First note that $\psi$ sends invariant arrows to invariant arrows without changing the direction (though possibly changing the slope) of the arrow. This is because $\psi$ induces a bijection on the boxes with labels $0 \le k \le n-1$ so stretching the arrow by applying $\psi$ doesn't affect whether it is invariant. On the other hand we can shrink any invariant arrow of $\psi(\lambda)$ onto an invariant arrow of $\lambda$ by moving the head and tail by the number of boxes deleted from $\psi(\lambda)$ to obtain $\lambda$, i.e., by applying $\varphi$ to the head and tail of the arrow. 

\end{proof}

For clarity, we will illustrate the above proof in the following example where $(a,b) = (2,3)$, $r = 2$ and $n = 13$. Consider the $(2,3;13)$-balanced partition below:

\begin{figure}[h]
\includegraphics[scale=0.4]{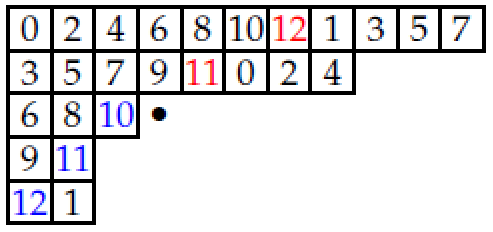}
\end{figure}

\noindent We have labeled the boxes by the weight $k \pmod{n}$. The box labeled by a $\bullet$ is the box in the diagonal $D_{12}$ that is not contained in $\lambda$ which we use to decompose $\lambda$ into the sets $A$ and $B$. The boxes $(i,j) \in A$ labeled with $n - b \le k \le n - 1$ are colored blue and the boxes $(i,j) \in B$ labeled with $n - a \le k \le n -1$ are colored green. 

Applying $\psi$ gives us the following $(2,3;19)$-balanced partition. 

\begin{figure}[h]
\includegraphics[scale=0.4]{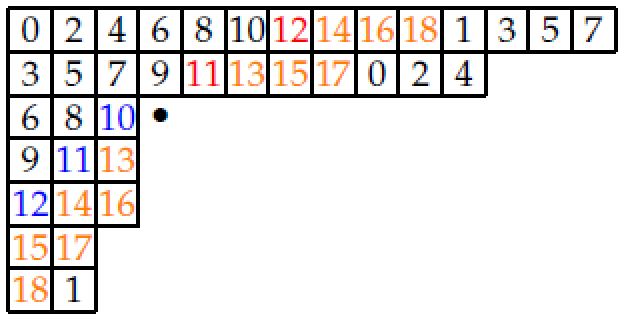}
\end{figure}

\noindent The new boxes that are inserted by $\psi$ are colored in orange and these are exactly the boxes with labels $n \le k \le n + ab - 1$. 

\subsection{Proof of Theorem \ref{thm:qpolynomial}} 

Recall we are going to prove the following: 

\begin{thm1.3} Fix integers $r > 0$ and $a,b$ with $ab < 0$. The cardinality $\# B^r_{a,b;n}$ is a quasipolynomial in $n$ of period $|ab|$ for $n \gg 0$.
\end{thm1.3}

The idea of the proof is to translate from $(a,b;n)$-balanced partitions to $(1,-1;n)$-balanced which we can then count using the cores-and-quotients bijection (see for example \cite[Chapter 11]{loehr}).

By Corollary \ref{cor:coprime} it suffices to prove Theorem \ref{thm:qpolynomial} for $a$ and $b$ both coprime to $n$. Without loss of generality, we assume that $a$ is positive and $b$ is negative. Let $P_m$ be the set of partitions of $m$. Consider the map 

$$
f : P_{rn} \to P_{-rabn}
$$

\noindent where for each $\lambda \in P_{rn}$, $f(\lambda)$ is the partition of $-rabn$ obtained by replacing each box of $\lambda$ by an $a \times (-b)$ rectangle. 

If $\lambda$ is the partition with rows $(\lambda_1, \ldots, \lambda_l)$, then $f(\lambda)$ is the partition with rows

\begin{equation}
(\underbrace{a\lambda_1, \ldots, a\lambda_1}_{\text{$-b$ times}}, \ldots, \underbrace{a\lambda_l, \ldots, a\lambda_l}_{\text{$-b$ times}}) \tag{$\ast$}.
\end{equation}

\noindent That is, $f(\lambda)$ is a partition whose row lengths are multiples of $a$ and such that each row repeats a multiple of $-b$ times. 

\begin{prop}\label{prop:expand} If $a$ and $b$ are both prime to $n$, then the function $f$ restricts to a bijection between $(a,b;n)$-balanced partitions of $rn$ and $(1,-1;n)$-balanced partitions of $-rabn$ satisfying condition $(\ast)$. \end{prop}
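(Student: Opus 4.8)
The plan is to verify that $f$ lands in the claimed target set, that its image consists exactly of the $(1,-1;n)$-balanced partitions satisfying $(\ast)$, and that it is a bijection onto this image. The key observation is a compatibility between the coloring $w(i,j) = ai+bj \bmod n$ on $\ZZ_{\geq 0}^2$ and the coloring $(i',j') \mapsto i'-j' \bmod n$ after the $a \times (-b)$ inflation. Concretely, when we replace the box $(i,j)$ of $\lambda$ by the rectangle of boxes $(i',j')$ with $ai \le i' < a(i+1)$ and $(-b)j \le j' < (-b)(j+1)$, I first compute the multiset of values $i'-j' \bmod n$ over that rectangle. Writing $i' = ai + s$ with $0 \le s < a$ and $j' = (-b)j + t$ with $0 \le t < -b$, we have $i'-j' = ai+bj + s + t = w(i,j) + (s+t)$, where $s+t$ ranges over $\{0,1,\ldots,a-b-2\}$. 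The crucial point is that, since $\gcd(a,-b)=1$, for each fixed residue $c \bmod n$ the number of pairs $(s,t)$ with $0\le s<a$, $0\le t<-b$ and $s+t\equiv c$ does \emph{not} depend on the starting residue $w(i,j)$ in an unhelpful way — but more to the point, summing over \emph{all} boxes of a given color $w(i,j) \equiv k$ in $\lambda$, each contributing the same multiset of offsets, the total count of each target color $k+c$ is just (number of boxes of color $k$ in $\lambda$) $\times$ (number of $(s,t)$ with $s+t\equiv c$). Hence if $\lambda$ has exactly $r$ boxes of each color mod $n$, then $f(\lambda)$ has exactly $r \cdot (\text{number of } (s,t) \text{ with } s+t \equiv c)$ boxes of color $c' = k+c$; but as $k$ ranges over a complete residue system the contributions to any fixed target color $c'$ come from all choices of $(s,t)$, summing to $r \cdot a(-b) / n \cdot$ — wait, I must be careful: the total number of boxes is $r n \cdot a(-b) = -rabn$, so if the coloring is equidistributed the common value is $-rabn/n = -rab \cdot r$... let me instead argue directly: since $\{s+t : 0\le s<a, 0\le t<-b\}$ with multiplicity forms a set of $-ab$ integers and these integers, reduced mod $n$, together with the $n$ residues $k$ of boxes of $\lambda$, produce each residue mod $n$ the same number of times precisely because $\lambda$ is balanced (each $k$ occurs $r$ times) — so each target color occurs $-rab$... no. The clean statement: for each color $c' \bmod n$, the boxes of $f(\lambda)$ of color $c'$ biject with pairs (box of $\lambda$ of color $k$, offset pair $(s,t)$) with $k + s + t \equiv c'$; summing over $k$ and using that $\lambda$ has $r$ boxes of each color, this count is $r \cdot \#\{(s,t) : 0\le s <a, 0\le t<-b\} = r\cdot(-ab)$. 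Since this is independent of $c'$, $f(\lambda)$ is $(1,-1;n)$-balanced (with parameter $-rab$ in place of $r$), and it manifestly satisfies $(\ast)$ by construction.

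Next I would show surjectivity onto the set of $(1,-1;n)$-balanced partitions of $-rabn$ satisfying $(\ast)$: given such a $\mu$, condition $(\ast)$ says every row length of $\mu$ is a multiple of $a$ and every row repeats a multiple of $-b$ times, so $\mu = f(\lambda)$ for a \emph{unique} partition $\lambda$ of $rn$ (collapse each $a \times (-b)$ block back to a single box); this already gives injectivity as well, since $f$ visibly has a left inverse on partitions satisfying $(\ast)$. The only remaining point is that this preimage $\lambda$ is itself $(a,b;n)$-balanced. This follows by running the color computation in reverse: $f(\lambda) = \mu$ is $(1,-1;n)$-balanced, and the color-count identity above expresses the color multiplicities of $\mu$ in terms of those of $\lambda$ via convolution with the fixed offset multiset. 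Because $a$ and $b$ are coprime to $n$, the offset multiset $\{s+t \bmod n\}$ — equivalently the generating function $\left(\sum_{s=0}^{a-1} q^{s}\right)\left(\sum_{t=0}^{-b-1} q^{t}\right) \bmod (q^n - 1)$ — is a unit... this is where care is needed, so let me flag it: the map ``multiplicity vector of $\lambda$'' $\mapsto$ ``multiplicity vector of $f(\lambda)$'' is convolution by a fixed vector on $\ZZ/n\ZZ$, and I must check this convolution operator is injective on the relevant space, or at least that a balanced output forces a balanced input. In fact injectivity of $f$ on partitions (already established) plus the fact that balanced-ness of the output is a \emph{necessary} condition handled by forward direction means I only need: if $f(\lambda)$ is balanced then $\lambda$ is balanced. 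Since the constant vector is fixed (up to scalar $-ab$) by the convolution, and convolution by a vector whose coordinate-sum is $-ab \ne 0 \bmod$ nothing — I would prove this by observing the Fourier transform (characters of $\ZZ/n\ZZ$) of the offset vector is $\prod$ over the geometric sums, which vanishes at a character $\chi$ iff $\chi$ is a nontrivial $a$-th root or $\chi$ is a nontrivial $(-b)$-th root of unity in $\ZZ/n\ZZ$; since $\gcd(a,n)=\gcd(b,n)=1$, no such nontrivial $\chi$ exists, so the Fourier transform is nowhere zero and the convolution operator is invertible. Hence $f(\lambda)$ balanced $\iff$ $\lambda$ balanced, completing the proof.

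I expect the main obstacle to be exactly this last linear-algebra / Fourier step: proving that the ``inflation'' operation is an honest bijection on the \emph{balanced} partitions, not just on all partitions satisfying $(\ast)$ — i.e., ruling out the possibility that some non-balanced $\lambda$ inflates to a balanced $\mu$. The coprimality hypothesis $\gcd(a,n)=\gcd(b,n)=1$ (reduced to via Corollary \ref{cor:coprime}) is precisely what makes the relevant convolution operator on $\ZZ/n\ZZ$ invertible, so the argument will hinge on making that dependence explicit. Everything else — termination of the inflation as a partition, the left inverse, the forward color count — is routine bookkeeping with Young diagrams. Once Proposition \ref{prop:expand} is in hand, Theorem \ref{thm:qpolynomial} should follow by transporting to $(1,-1;n)$-balanced partitions and invoking the cores-and-quotients description together with the constraint $(\ast)$, which cuts out a sublattice-type condition whose count in $n$ is visibly quasipolynomial of the stated period.
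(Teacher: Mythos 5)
Your proposal is correct and follows essentially the same route as the paper: exhibit the obvious collapse map as an inverse of $f$ on partitions satisfying $(\ast)$, and transfer balancedness in both directions by noting that the weight-multiplicity vector of $f(\lambda)$ is the convolution of that of $\lambda$ with the fixed per-box offset multiset, whose generating function is $\left(\frac{q^a-1}{q-1}\right)\left(\frac{q^{-b}-1}{q-1}\right)$ up to a power of $q$ and hence a unit in $\QQ[q]/(q^n-1)$ exactly because $\gcd(a,n)=\gcd(b,n)=1$ — your nowhere-vanishing Fourier transform argument is this same unit statement in different language. The only slip is the offset arithmetic: the box $(ai+s,\,-bj+t)$ has $(1,-1;n)$-weight $ai+bj+s-t$, not $ai+bj+s+t$, but this changes the offset multiset only by a shift (multiplication by a unit power of $q$), so nothing in the argument is affected.
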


\begin{proof} It is clear that the map $f$ is injective and that it surjects onto the set of partitions satisfying $(\ast)$ so that $f$ is a bijection between $P_{rn}$ and the subset of $P_{-rabn}$ satisfying $(\ast)$. To prove the claim, it suffices to show that $\lambda$ is $(a,b;n)$-balanced if and only if $f(\lambda)$ is $(1,-1;n)$-balanced. 

We will use the generating function for the $G_{a,b;n}$-weights of the boxes of a partition. For any partition $\mu$, define 

$$
w^{\mu}_{a,b;n}(q) := \sum_{(i,j) \in \mu} q^{ai + bj \mod n} \in \QQ[q]/(q^n - 1).
$$

\noindent Now $\mu$ is $(a,b;n)$-balanced if and only if 

$$
w^\mu_{a,b;n}(q) = r(q^{n-1} + q^{n-2} + \ldots + 1) \pmod{q^n - 1}
$$ 

\noindent if and only if $(q - 1)w^\mu_{a,b;n}(q) = 0 \pmod{q^n - 1}$. 

Let $w^\lambda_{a,b;n}(q) = \sum_{l = 1}^{rn} q^{w_k} \pmod{q^n - 1}$ where $l$ runs through the $rn$ boxes of $\lambda$. Under the map $f$, each box gets replaced with an $a \times (-b)$ rectangle. Such a rectangle has $(1,-1;n)$-weight generating function given by 

$$
\left( \frac{q^a - 1}{q - 1} \right)\left(\frac{q^{-b} - 1}{q - 1}\right) \mod (q^n - 1)
$$
\noindent However, if the $k^{th}$ box of $\lambda$ has $(a,b;n)$-weight $w_k$, the corresponding rectangle in $f(\lambda)$ has a box with $(1,-1;n)$-weight $w_k - b + 1$ in the bottom right corner so the above generating function must be shifted by $q^{-b + 1}$. Putting this together gives  
$$
w^{f(\lambda)}_{1,-1;n}(q) = q^{-b + 1} w^\lambda_{a,b;n}(q) \left( \frac{q^a - 1}{q - 1} \right)\left(\frac{q^{-b} - 1}{q - 1}\right) \mod (q^n - 1). 
$$

\noindent for the $(1,-1;n)$-weight generating function of $f(\lambda)$. 

By assumption $a$ and $b$ are coprime to $n$ so $\frac{q^a - 1}{q - 1}$ and $\frac{q^{-b} - 1}{q - 1}$ are units in $\QQ[q]/(q^n - 1)$. It follows that $(q - 1)w^{f(\lambda)}_{1,-1;n}(q) = 0 \pmod{q^n - 1}$ if and only if $(q - 1)w^\lambda_{a,b;n}(q) = 0 \pmod{q^n - 1}$. Therefore $\lambda$ is $(a,b;n)$-balanced if and only if $f(\lambda)$ is $(1,-1;n)$-balanced.

\end{proof} 

\begin{prop}\label{prop:qpolynomial} Suppose $a,b$ are coprime to $n$ with $a>0$ and $b<0$. For any $r$, the number of $(a,b;n)$-balanced partitions of $rn$, $\#B^r_{a,b;n}$, is a quasipolynomial in $n$ with period $|ab|$.
\end{prop}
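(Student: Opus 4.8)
The plan is to use Proposition \ref{prop:expand} to transfer the counting problem to $(1,-1;n)$-balanced partitions of $-rabn$ that satisfy condition $(\ast)$, and then exploit the cores-and-quotients bijection. Recall that a partition is $(1,-1;n)$-balanced if and only if it has empty $n$-core; equivalently, via the cores-and-quotients bijection, such partitions of $-rabn$ correspond bijectively to $n$-tuples of partitions $(\mu^{(0)}, \ldots, \mu^{(n-1)})$ with $\sum |\mu^{(i)}| = -rab$. So before imposing $(\ast)$, the count is independent of $n$; the whole subtlety is that condition $(\ast)$ — that every row length is a multiple of $a$ and every distinct row is repeated a multiple of $-b$ times — cuts out a subset of these $n$-tuples whose size depends on $n$, and we must show that dependence is quasipolynomial of period $|ab|$.

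First I would translate condition $(\ast)$ into the language of the $n$-quotient. Writing $b' = -b > 0$, the partitions satisfying $(\ast)$ are exactly those of the form $f(\lambda)$; I would track how the beta-set (first-column hook lengths, or equivalently the boundary lattice path / Maya diagram) of $f(\lambda)$ relates to that of $\lambda$. Replacing each box by an $a \times b'$ rectangle dilates the Young diagram's boundary path by a factor of $a$ horizontally and $b'$ vertically, so the Maya diagram of $f(\lambda)$ is obtained from that of $\lambda$ by a very structured "inflation." Passing to the $n$-quotient means reading the Maya diagram of $f(\lambda)$ along $n$ arithmetic progressions modulo $n$. The key point is that the inflated Maya sequence is periodic-modulo-$ab$ in its local structure: which residue class mod $n$ a given bead of $f(\lambda)$'s Maya diagram lands in depends on its position, and that position runs through the original beads of $\lambda$ in blocks governed by $a$ and $b'$. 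Since $a, b'$ are coprime to $n$, shifting $n$ by $ab = -ab'$... wait, $ab < 0$ so $|ab| = ab'$; shifting $n$ by $|ab|$ permutes the residue classes in a controlled way.

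The cleanest route, and the one I would aim for, is to set up a generating-function / Ehrhart-theoretic argument. I would express $\#B^r_{a,b;n}$ as the number of lattice points in a region: encode a partition satisfying $(\ast)$ by its row data $\lambda_1 \ge \lambda_2 \ge \cdots \ge \lambda_l \ge 0$ (a partition of $rn$), and the $(1,-1;n)$-balanced condition — equivalently, by Proposition \ref{prop:expand}, the $(a,b;n)$-balanced condition directly on $\lambda$ — becomes a congruence condition on the multiset of weights $\{ai + bj \bmod n\}$. Concretely, $\lambda$ being $(a,b;n)$-balanced says that for each residue $s \bmod n$, the number of boxes $(i,j)\in\lambda$ with $ai+bj\equiv s$ equals $r$. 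For a fixed $l$ and fixed "shape type" of how the rows cluster, this is a system of linear equations and congruences in the $\lambda_i$ with $n$ appearing linearly in the right-hand side (the total is $rn$) and in the moduli. By Ehrhart–Macdonald reciprocity / the theory of vector partition functions (Blakley, Sturmfels), the number of solutions to such a system, as $n$ varies, is a quasipolynomial; its period divides the relevant determinants, which one computes to be $|ab|$. Summing over the finitely many combinatorial types of partitions (bounded because $l \le $ something linear in $n$ but the number of \emph{distinct} parts and their multiplicities organizes into finitely many types after the $(\ast)$ reduction) preserves quasipolynomiality.

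The main obstacle I anticipate is controlling the "finitely many types" step: a priori the number of parts and the support of $\lambda$ grow with $n$, so one cannot naively decompose into a fixed finite set of lattice-point problems. The resolution should come from condition $(\ast)$ together with the $n$-quotient structure: after applying cores-and-quotients to $f(\lambda)$, the data is an $n$-tuple of partitions of total size $-rab$, a \emph{bounded} amount of data, and I would show that condition $(\ast)$ translates into a condition on these $n$-tuples that, for $n$ in a fixed residue class mod $|ab|$ and $n$ large, depends on $n$ only through that residue class. In other words, the hard part is proving that the "inflation compatible with the $n$-quotient" condition stabilizes: that there is $N$ such that for $n > N$ the set of admissible $n$-tuples $(\mu^{(0)},\dots,\mu^{(n-1)})$ (all but finitely many forced empty) is in size-preserving bijection with the corresponding set for $n + |ab|$, up to a shift of indices. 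Once that stabilization is established, quasipolynomiality of $\#B^r_{a,b;n}$ in $n$ with period $|ab|$ is immediate, and one even recovers the explicit generating function as a finite sum over the bounded $n$-tuple data, matching the remark about "a finite sum of infinite products" in the introduction.
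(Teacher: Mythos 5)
Your reduction is the same as the paper's: apply Proposition \ref{prop:expand}, then use cores-and-quotients so that the data becomes an $n$-tuple of partitions of total size $|rab|$ with condition $(\ast)$ imposed. But from there the argument has a genuine gap, and in fact the proposed endgame proves the wrong statement. First, a small but telling slip: the count \emph{before} imposing $(\ast)$ is not independent of $n$ --- the number of $n$-tuples of partitions of total size $|rab|$ grows with $n$ (for total size $1$ it equals $n$). More seriously, your ``stabilization'' claim --- that for $n$ large the set of admissible $n$-tuples is in size-preserving bijection with the admissible set for $n+|ab|$ --- would force $\#B^r_{a,b;n}$ to be eventually \emph{periodic} in $n$, which is false: already for $(a,b)=(1,-1)$, $r=1$ one has $\#B^1_{1,-1;n}=n$, and in general the quasipolynomial has degree $r$. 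So quasipolynomiality cannot be ``immediate'' from such a bijection; the polynomial growth has to come from somewhere, and no mechanism for it appears in your sketch. The alternative Ehrhart/vector-partition-function route is only asserted: the ``finitely many combinatorial types'' issue that you yourself flag is exactly the hard point, and the claim that the period divides determinants ``computed to be $|ab|$'' is never carried out.

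What the paper actually does at this point is different in kind, not just in detail. Condition $(\ast)$ is translated into divisibility conditions on the lengths of maximal runs of $1$'s (divisible by $-b$) and of $0$'s (divisible by $a$) in the abacus $\mathrm{Ab}(\lambda)$, read down the columns of the abacus with $n$ runners. The $n$-quotient is then written as finitely many possible ``chunks'' of nonempty partitions separated by gaps of $d_i$ empty runners (rows of the form $\ldots 111|000\ldots$), with the empty-core condition pinning down the alignment of centers. The two key steps are: (i) inserting $|ab|$ aligned empty rows into a gap preserves the run-length congruences, because $1$'s and $0$'s are inserted in multiples of $|ab|$ in each column, so admissible configurations for $n+|ab|$ are obtained from a fixed finite list of realizable chunk collections by distributing extra empty rows among the gaps; and (ii) the number of such distributions is a sum of binomial coefficients in the gap sizes, hence a polynomial in $n$ on each residue class mod $|ab|$ --- this is precisely where the positive-degree quasipolynomial comes from, in contrast to a cardinality-preserving bijection. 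To repair your proposal you would need to replace the bijection claim by this kind of ``finite set of cores of the configuration plus free insertion of empty runners'' parametrization (or genuinely set up and verify the vector-partition-function count, including the period computation), not merely assert stabilization.
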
 

\begin{proof} 

By Proposition \ref{prop:expand}, it suffices to count $(1,-1;n)$-balanced partitions of $-rabn$ satisfying $(\ast)$. 

Let $\lambda \in B^{-rab}_{1,-1;n}$ be a $(1,-1;n)$-balanced partition. By Proposition \ref{prop:trivialcore} in Appendix \ref{appendix}, such partitions are in bijection with $n$-tuples of partitions $Q_n(\lambda) = (\lambda^{(0)}, \ldots, \lambda^{(n-1)})$ such that $\sum |\lambda^{(i)}| = -rab$. Let $\mathrm{Ab}(\lambda)$ and $\mathrm{Ab}_n(\lambda)$ be the corresponding abacus and abacus with $n$ runners (see Appendix \ref{appendix}). Condition $(\ast)$ means that every subsequence of consecutive $1$'s in $\mathrm{Ab}(\lambda)$ has length divisible by $-b$ and every subsequence of consecutive $0$'s has length divisible by $a$. Increasing $n$ corresponds to increasing the number of rows of $\mathrm{Ab}_n(\lambda)$, or equivalently the number of partitions in the $n$-tuple $Q_n(\lambda)$. 

We write the $n$-quotient $Q_n(\lambda)$ of $\lambda \in B^{-rab}_{1,-1;n}$ as 
$$
(\mu_1,d_1,\mu_2,d_2,\dots,d_{s-1},\mu_s)
$$ 

\noindent where $\mu_i$ is a sequence of nonempty partitions and $d_i \in \ZZ_{\geq 0}$ stands for a sequence of $d_i$ empty partitions. The $d_i$ correspond to $d_i$ consecutive rows of $\mathrm{Ab}_n(\lambda)$ of the form $\ldots 111|000 \ldots$ where the position marked by the $|$ is the \textit{center} of the row. Each $\mu_i$ corresponds to consecutive rows that are not of this form and we call the $\mu_i$ \textit{chunks}.  

The congruence conditions on the subsequences of consecutive $1$'s and $0$'s on the single abacus $\mathrm{Ab}(\lambda)$ is equivalent to the same congruence condition on the length of consecutive $1$'s and $0$'s in $\mathrm{Ab}_n(\lambda)$ read in lexicographic order down each column. The condition that the core $C_n(\lambda)$ is empty means that after transposing every occurrence of $01$ in $\mathrm{Ab}_n(\lambda)$, the resulting abacus $\widetilde{\mathrm{Ab}}_n(\lambda)$ is of the form 

\begin{align}
\begin{split}
\label{alignment}
&\vdots \\
\ldots 111&1|000 \ldots \\ 
\ldots 111&1|000 \ldots \\
\ldots 111&|0000 \ldots \\
\ldots 111&|0000 \ldots \\ 
&\vdots 
\end{split}
\end{align} 

\noindent where the centers of each row are aligned except in at most one position.

For each collection of chunks $\{\mu_1, \ldots, \mu_s\}$, suppose that there is some $n$ so that these chunks can be used to construct an $n$-tuple $(\mu_1,d_1,\mu_2,d_2,\dots,d_{s-1},\mu_s)$ representing a balanced partition $\lambda$ satisfying $(\ast)$. Then we can add $|ab|$ to any one of the $d_i$ to obtain an $(n + |ab|)$-tuple of partitions representing a balanced partition in $B^{-rab}_{1,-1; n + |ab|}$ satisfying condition $(\ast)$. Indeed this corresponds to adding $|ab|$ consecutive rows of the form $\ldots 111|000 \ldots$ to the abacus $\mathrm{Ab}_n(\lambda)$ to obtain an abacus $\mathrm{Ab}_{n + |ab|}(\lambda')$ for some $(1,-1;n + |ab|)$-balanced partition $\lambda'$. The condition $\hyperref[alignment]{(7)}$ fixes how the centers of these new rows must be aligned at all but one of the $d_i$ where we have at most two choices of alignment. This preserves the congruence conditions on the sequences of consecutive $0$'s and $1$'s of $\mathrm{Ab}(\lambda')$ since both the $1$'s and $0$'s are being inserted in multiples of $|ab|$ within each column. Thus $\lambda'$ satisfies condition $(\ast)$. 

Since the sum of the number of boxes in the $n$-tuple of partitions remains a constant $-rab$, there are only finitely many possible chunks. Consequently, for large enough $n$, every collection of chunks that can be realized into a balanced partition satisfying $(\ast)$ will have been realized. Thus every partition in $B^{-rab}_{1,-1; n + |ab|}$ satisfying condition $(\ast)$ is obtained from a partition in $\lambda \in B^{-rab}_{1,-1;n}$ by choosing where to insert a string of $|ab|$ empty partitions into $Q_n(\lambda)$. Such choices are counted by a sum of binomial coefficients over all realizable chunks which is a polynomial. We have one such polynomial for every residue class modulo $|ab|$ since we can only increase $n$ in multiples of $|ab|$. This is the required quasipolynomial counting partitions of $B^{-rab}_{1,-1;n}$ satisfying $(\ast)$, or equivalently, counting $\# B^r_{a,b;n}$ for $n \gg 0$.  
\end{proof}

\appendix
\section{Cores-and-quotients}
\label{appendix}

We briefly review here the cores-and-quotients bijection for partitions. For more details see for example \cite[Chapter 11]{loehr}. An \textit{abacus} is a function $h : \ZZ \to \{0,1\}$ so that $h(z) = 1$ for $z \ll 0$ and $h(z) =  0$ for $z \gg 0$. We can write this as a sequence of $1$'s and $0$'s consisting of all $1$'s far enough to the left and all $0$'s far enough to the right. To each partition $\lambda$, we can associate an abacus $\mathrm{Ab}(\lambda)$ which encodes the outside edge of the partition by writing a $1$ for each vertical edge segment and writing a $0$ for each horizontal edge segment.

For example, if $\lambda = (4,2,2,1)$ then we construct $\mathrm{Ab}(\lambda)$:

\begin{figure}[h]
\includegraphics[scale=0.4]{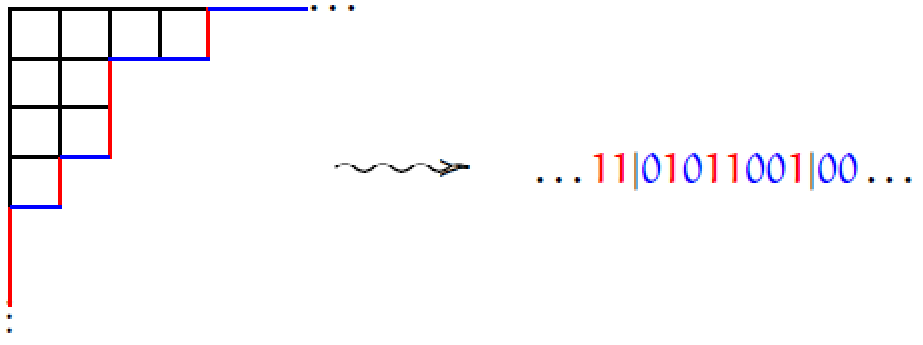}
\end{figure}

\noindent Here we have colored the vertical edge segments red and the horizontal edge segments blue with the corresponding colors for the abacus. This map gives a bijection between partitions and abaci up to translation. Here we have marked off where the edge of the partition begins and ends with a vertical bar. This will always be before the first occurence of $0$ and after the last occurence of $1$. The finite sequence between the bars uniquely determines the abacus and so we will often just work with this finite sequence, filling in $1$'s at the beginning or $0$'s at the end as needed. 

An \textit{abacus with $n$ runners} is an $n$-tuple of abaci that we will picture as $n$ horizontal sequences of $1$'s and $0$'s stacked on top of each other. We will call the $i^{th}$ abacus in this tuple the $i^{th}$ runner. By the above bijection this corresponds to an $n$-tuple of partitions. Let $\mathrm{Ab}(\lambda)$ be the abacus for some partition $\lambda$. Write down the sequence of $1$'s and $0$'s of $\mathrm{Ab}(\lambda)$ vertically in columns of size $n$ starting with the first $0$. This will give an array of $n$ rows of $1$'s and $0$'s which we will interpret as an abacus with $n$ runners that we will denote $\mathrm{Ab}_n(\lambda)$. Equivalently, $\mathrm{Ab}_n(\lambda)$ is an abacus whose $i^{th}$ runner is the subsequence of $\mathrm{Ab}(\lambda)$ of $1$'s and $0$'s in position equal to $i \pmod{n}$. The corresponding $n$-tuple of partitions is the \textit{$n$-quotient} of $\lambda$ which we denote $Q_n(\lambda)$. 

Let us illustrate this with the above example. As we saw, the abacus corresponding to $\lambda = (4,2,2,1)$ is $\ldots 11|\color{red}01011001\color{black}|00 \ldots$. Writing down this sequence vertically in columns of size $n = 3$ gives the following abacus with $3$ runners: 

\begin{align*} 
\ldots &11\color{red}010\color{black}00 \ldots \rightsquigarrow (1) \\
\ldots &11\color{red}111\color{black}00 \ldots \rightsquigarrow \emptyset \\
\ldots &11\color{red}00\color{black}000 \ldots \rightsquigarrow \emptyset 
\end{align*}

\noindent where we have colored the original sequence between the vertical bars in red for illustration. Reading these $3$ abaci accross gives us the $3$-tuple of partitions $Q_3((4,2,2,1)) = ((1),\emptyset,\emptyset)$. 

We construct a new abacus with $n$ runners $\widetilde{\mathrm{Ab}}_n(\lambda)$ from $\mathrm{Ab}_n(\lambda)$ by transposing every occurrence of $01$ so that $\widetilde{\mathrm{Ab}}_n(\lambda)$ corresponds to the $n$-tuple of empty partitions. Finally, reading $\widetilde{\mathrm{Ab}}_n(\lambda)$ vertically from left to right gives a single abacus whose corresponding partition we call the \textit{$n$-core} of $\lambda$, denoted $C_n(\lambda)$. That is, we are undoing the process by which we obtained $\mathrm{Ab}_n(\lambda)$ from $\mathrm{Ab}(\lambda)$. 

Continuing the example from above, $\widetilde{\mathrm{Ab}}_n(\lambda)$ is given by

\begin{align*} 
\ldots &11\color{blue}10\color{black}000 \ldots \\
\ldots &1111100 \ldots \\
\ldots &1100000 \ldots 
\end{align*}

\noindent where we have highlighted in blue the occurence of $01$ that was transposed. Reading this abacus with $n$ runners vertically gives the abacus $\ldots 111001001000\ldots$ which corresponds to the partition $C_3(\lambda) = (4,2)$. 

We need the following well known theorem about partitions:

\begin{thm}\label{thm:cores&quot}(Cores-and-quotients bijection \cite[Theorem 11.22]{loehr}) The map 

$$
\lambda \mapsto (C_n(\lambda),Q_n(\lambda))
$$

\noindent gives a bijection between the set of partitions and pairs of $n$-cores and $n$-quotients. Furthermore, if $Q_n(\lambda) = (\lambda^{(0)},\ldots, \lambda^{(n-1)})$, then 

\begin{equation}\label{cores&quot}
|\lambda| = |C_n(\lambda)| + n \sum_{i = 1}^{n-1} |\lambda^{(i)}|.
\end{equation}

\end{thm}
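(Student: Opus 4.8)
The plan is to prove everything inside the abacus model recalled in Appendix \ref{appendix}, where the statement reduces to a short sequence of elementary manipulations with bead configurations. I normalize by identifying a partition $\lambda$ with its charge-zero bead set $S_\lambda=\{\lambda_i-i:i\ge 1\}\subset\ZZ$ (a translate of the abacus used in Appendix \ref{appendix}, chosen so that $\#(S_\lambda\cap\ZZ_{\ge0})=\#(\ZZ_{<0}\setminus S_\lambda)$, which has no effect on $C_n$ or $Q_n$ at the partition level); then $\lambda\mapsto S_\lambda$ is a bijection onto the set of bead configurations of charge $0$. The one input that is not pure bookkeeping is the classical border-strip lemma: if $m\in S_\lambda$ and $m-n\notin S_\lambda$, then moving the bead from position $m$ to position $m-n$ yields $S_\mu$ for a partition $\mu\subset\lambda$ with $|\lambda|-|\mu|=n$. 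I would cite this (see \cite[Ch.~11]{loehr}), or, if a proof is wanted, check it on a horizontal and a vertical strip of length $n$ and observe that an arbitrary border strip of length $n$ is squeezed between these two extremes.

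For the bijection I would proceed in three bijective steps. First, splitting positions by residue modulo $n$ writes $S_\lambda\cap(n\ZZ+i)=nT_i+i$ uniquely, and a direct count gives $\sum_{i=0}^{n-1}c(T_i)=c(S_\lambda)=0$; this is a bijection between charge-zero bead configurations and $n$-tuples $(T_0,\dots,T_{n-1})$ of bead configurations with $\sum_i c(T_i)=0$. Second, each $T_i$ is equivalent to the pair consisting of its charge $c_i:=c(T_i)$ and its charge-zero partition $\lambda^{(i)}$, so the previous set is in bijection with the set of pairs $\big((c_0,\dots,c_{n-1}),(\lambda^{(0)},\dots,\lambda^{(n-1)})\big)$ with $\sum_i c_i=0$. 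Third, feeding the empty partition into every runner with charge vector $(c_i)$ produces a bead configuration whose partition is an $n$-core, yielding a bijection $\{(c_0,\dots,c_{n-1})\in\ZZ^n:\sum_i c_i=0\}\leftrightarrow\{n\text{-cores}\}$; here one uses that a partition $\mu$ is an $n$-core precisely when it has no removable $n$-rim-hook, precisely when every runner of $\mathrm{Ab}_n(\mu)$ is already in ``all $1$'s then all $0$'s'' form, which is exactly the outcome of the ``transpose every $01$'' operation of the appendix. Composing these three bijections is, by construction, the map $\lambda\mapsto(C_n(\lambda),Q_n(\lambda))$, and its inverse is the explicit reassembly just described; in particular $C_n$ and $Q_n$ are well defined and $\lambda\mapsto(C_n(\lambda),Q_n(\lambda))$ is a bijection onto pairs (an $n$-core, an $n$-tuple of partitions).

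For the size identity I would iterate the border-strip move. So long as some runner of $\mathrm{Ab}_n(\lambda)$ is not in ``all $1$'s then all $0$'s'' form, there is a bead $m$ with $m-n$ empty; performing that move removes $n$ boxes from the ambient partition and removes exactly one box from the partition currently sitting on runner $i=m\bmod n$, leaving the other runners unchanged. After $\sum_{i=0}^{n-1}|\lambda^{(i)}|$ such moves every runner has been emptied, i.e.\ we have arrived at $\mathrm{Ab}_n(C_n(\lambda))$, and since each move cost $n$ boxes we get $|\lambda|=|C_n(\lambda)|+n\sum_{i=0}^{n-1}|\lambda^{(i)}|$.

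The step I expect to require the most care is the first one, the charge bookkeeping: the normalization of $\mathrm{Ab}(\lambda)$ must be fixed once and for all, since residue-splitting produces a well-defined $n$-\emph{tuple} (rather than an $n$-tuple up to cyclic relabeling) only after that choice, and one must track precisely that the runner charges discarded in passing to $Q_n(\lambda)$ are exactly the data retained by $C_n(\lambda)$. Apart from this and the border-strip lemma, every step is a straightforward unwinding of the constructions in Appendix \ref{appendix}.
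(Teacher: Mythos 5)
Your argument is correct, but note that the paper does not prove this statement at all: Theorem~\ref{thm:cores&quot} is quoted from \cite[Theorem 11.22]{loehr} purely as a citation, and the appendix only sets up the abacus notation needed to use it. What you have written is essentially the standard beta-set/abacus proof (residue splitting of the charge-zero bead set, the charge bookkeeping $c(S_\lambda)=\sum_i c(T_i)$, charges-with-zero-sum $\leftrightarrow$ $n$-cores, and the border-strip move ``bead from $m$ to $m-n$ removes an $n$-rim hook and one box of the runner partition'' iterated to get the size identity), and all the steps check out; it is compatible with the conventions of Appendix~\ref{appendix}, since pushing beads down a runner is exactly the ``transpose every $01$'' operation used there to define $C_n(\lambda)$. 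Two small remarks. First, your caveat about normalization is the right one to flag: a different placement of the origin on $\mathrm{Ab}(\lambda)$ cyclically permutes the runners and hence relabels $Q_n(\lambda)$, but this does not affect the bijectivity claim or the size formula, which is all the paper uses (indeed only Proposition~\ref{prop:trivialcore} is needed downstream). Second, your proof yields the identity with the sum over $i=0,\dots,n-1$, which is the correct form; the range $i=1,\dots,n-1$ in the paper's display is evidently a typo.
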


We also need the following fact which is known though not explicitly stated in the literature (see for example \cite[Theorem 4.5]{nagao}):

\begin{prop}\label{prop:trivialcore} A partition $\lambda$ is $(1,-1;n)$-balanced if and only if it has empty $n$-core. In particular, the cores-and-quotients bijection restricts to a bijection between $B^r_{1,-1;n}$ and the set of $n$-tuples of partitions $(\lambda^{(0)},\ldots, \lambda^{(n-1)})$ satisfying 

$$
\sum_{i = 1}^{n-1} |\lambda^{(i)}| = r.
$$

\end{prop}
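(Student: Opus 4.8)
The plan is to reduce the equivalence to the case of $n$-cores, the point being that passing from a partition to its $n$-core shifts the count of boxes in every residue class modulo $n$ by the same amount. The key elementary observation is that an $n$-ribbon (rim hook of size $n$) meets each residue class modulo $n$ exactly once: traversing such a ribbon from its box in the lowest row to its box in the rightmost column, at each step one moves either up or to the right, and in either case the content $i-j$ drops by $1$, so the $n$ contents of the ribbon are $n$ consecutive integers. Equivalently, in the notation of the proof of Proposition~\ref{prop:expand}, an $n$-ribbon contributes $q^{c}(1+q+\cdots+q^{n-1})\equiv 0 \pmod{q^{n}-1}$ to the weight generating function, so $w^{\lambda}_{1,-1;n}(q)\equiv w^{C_n(\lambda)}_{1,-1;n}(q)\pmod{q^{n}-1}$.

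From this I would deduce that if $Q_n(\lambda)=(\lambda^{(0)},\dots,\lambda^{(n-1)})$, then $\lambda$ is obtained from $C_n(\lambda)$ by adding $N:=\sum_i|\lambda^{(i)}|$ successive $n$-ribbons --- one for each adjacent transposition $01\mapsto 10$ used to sort the runners of $\mathrm{Ab}_n(\lambda)$ --- so that for every residue $t$ the number of boxes of $\lambda$ with content $\equiv t$ exceeds that of $C_n(\lambda)$ by exactly $N$. Consequently $\lambda$ is $(1,-1;n)$-balanced if and only if $C_n(\lambda)$ is, and when $C_n(\lambda)=\emptyset$ every residue class contains exactly $N=|\lambda|/n$ boxes by the size identity in Theorem~\ref{thm:cores&quot}. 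This already gives the easy direction --- if $C_n(\lambda)=\emptyset$ then $\lambda$ is a union of $n$-ribbons and hence balanced --- and, combined with the cores-and-quotients bijection, the ``in particular'' assertion: $\lambda\in B^r_{1,-1;n}$ iff $C_n(\lambda)=\emptyset$ and $|\lambda|=rn$, iff $C_n(\lambda)=\emptyset$ and $\sum_i|\lambda^{(i)}|=r$.

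What remains, and what I expect to be the main obstacle, is the converse: a nonempty $n$-core is never $(1,-1;n)$-balanced. I would argue on the abacus of the Appendix. An $n$-core $\mu$ is exactly a partition all of whose runners in $\mathrm{Ab}_n(\mu)$ are already sorted --- an unsorted runner contains an occurrence $01$, i.e.\ a removable $n$-ribbon --- so $\mu$ is encoded by its vector of runner charges $(d_0,\dots,d_{n-1})\in\ZZ^n$ with $\sum_i d_i=0$, and $\mu=\emptyset$ precisely when all the $d_i$ coincide, hence vanish. Each box of $\mu$ corresponds to an inversion of the single abacus $\mathrm{Ab}(\mu)$, namely a pair of positions $p<q$ carrying a $0$ and a $1$ respectively, and the content of that box equals the number of $1$'s lying to the right of $q$ minus the number of $0$'s lying to the left of $p$. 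Since the runners are sorted every inversion joins two distinct runners, and expanding the number of boxes of content $\equiv t$ as a function of the $d_i$ yields an explicit expression which one checks is constant in $t$ only when $\vec d=0$. I expect this last bookkeeping to be the most technical step; alternatively one can cite the known description of the content statistic of an $n$-core in terms of its charge vector, in the spirit of \cite[Theorem 4.5]{nagao} and the standard theory of $n$-cores and the affine symmetric group.
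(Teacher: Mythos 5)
Your route is genuinely different from the paper's: the paper does not argue combinatorially at all, but invokes the $S$-equivariant diffeomorphism $H^r_{1,-1;n}\cong\Hilb^r(H^1_{1,-1;n})$ of Nagao \cite{nagao} and reads the statement off a commutative diagram of torus fixed-point sets, using that $H^1_{1,-1;n}$ is a toric surface with $n$ fixed points. Your ribbon/abacus argument, if completed, is more elementary and self-contained. The easy direction is fine: an $n$-ribbon is a connected skew shape with no $2\times 2$ square, so its cells carry $n$ consecutive contents and hence exactly one of each residue mod $n$ (your traversal description has the directions wrong --- from the bottom-right end one moves up or \emph{left}, and $i-j$ then decreases at every step; ``up or to the right'' does not change $i-j$ monotonically --- but the conclusion is the standard fact). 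Peeling off the $\sum_i|\lambda^{(i)}|$ ribbons then shows $\lambda$ is balanced iff $C_n(\lambda)$ is, and the ``in particular'' clause follows from the size identity of the cores-and-quotients bijection (Appendix \ref{appendix}) as you say.

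The genuine gap is exactly the step you flag: that a \emph{nonempty} $n$-core is never $(1,-1;n)$-balanced. This is the whole content of the hard direction, and it is left at ``an explicit expression which one checks,'' with the inversion-content formula stated only up to a sign/transpose convention and with no argument that equal residue counts force the charge vector to vanish. The claim is true, and there is a short way to close it that in fact bypasses the ribbon step entirely: summing the weights row by row as in the proof of Proposition \ref{prop:expand} gives
$$
(q-1)\,w^{\lambda}_{1,-1;n}(q)\;\equiv\;\sum_{k\ge 1}\bigl(q^{\lambda_k-k+1}-q^{\,1-k}\bigr)\pmod{q^n-1},
$$
a finite sum, with $q$ a unit mod $q^n-1$. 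Hence $\lambda$ is balanced iff its bead set $\{\lambda_k-k\}_{k\ge1}$ meets each residue class mod $n$ exactly as often as the vacuum bead set $\{-k\}_{k\ge1}$, i.e.\ iff every runner of $\mathrm{Ab}_n(\lambda)$ has charge zero; since passing to the $n$-core only pushes beads down their runners and the vacuum is the unique fully pushed-down configuration with all charges zero, this is precisely the condition $C_n(\lambda)=\emptyset$. Either supply such an argument (or a citation for the standard fact that the residue vector, modulo the all-ones vector, determines the $n$-core); as written, the converse is asserted rather than proved.
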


\begin{proof} From Theorem 4.5 of \cite{nagao}, there is a commutative diagram 

$$
\xymatrix{
\bigsqcup_r\left(H^r_{1,-1;n}\right)^{S} \ar[r] \ar[d] & \Pi \ar[d] \\
\bigsqcup_r\Hilb^r(H^1_{1,-1;n})^S \ar[r] & \mc{C}_n \times \Pi^n }
$$

\noindent where $S = (\CC^*)^2$ is the torus acting on $\mb{A}^2$, $\Pi$ is the set of all partitions and $\mc{C}_n$ is the set of $n$-cores. The top horizontal map sends a torus fixed point to the corresponding $(1,-1;n)$-balanced partition. The vertical map on the left is a natural bijection induced by an $S$-equivariant diffeomorphism \cite[Lemma 4.1.3]{nagao}

$$
H^r_{1,-1;n} \to \Hilb^r(H^1_{1,-1;n}).
$$

\noindent The vertical map on the right is the cores-and-quotients bijection. 

Now $H^1_{1,-1;n}$ is an $S$-toric variety with $n$ torus fixed points (see Section \ref{sec:toric}). The torus fixed points of $\Hilb^r(H^1_{1,-1;n})$ consist of a choice of monomial ideal supported at each torus fixed point. Consequently $\Hilb^r(H^1_{1,-1;n})^S$ is in bijection with $n$-tuples $(\lambda^{(0)}, \ldots, \lambda^{(n-1)})$ such that 

$$
\sum_{i = 1}^{n-1} |\lambda^{(i)}| = r
$$

\noindent and this is precisely the bottom horizontal map in the diagram. The result then follows from commutativity. 

\end{proof}

\bibliographystyle{amsalpha} 
\bibliography{betti}

\end{document}